\newtheorem{theorem}{Theorem}
\newtheorem{corollary}[theorem]{Corollary}
\newtheorem{lemma}[theorem]{Lemma}
\newtheorem{proposition}[theorem]{Proposition}
\newtheorem{assumption}{Assumption}
\theoremstyle{definition}
\newtheorem{definition}{Definition}
\newtheorem{remark}{Remark}
\newtheorem{example}{Example}
\newcommand{\R}{\mathbb{R}}
\newcommand{\N}{\mathbb{N}}
\newcommand{\mS}{\mathcal{S}}
\newcommand{\mM}{\mathcal{M}}
\newcommand{\bfX}{\mathbf{X}}
\newcommand{\bfG}{\mathbf{G}}
\newcommand{\Ep}{\mathbb{E}}
\renewcommand{\Pr}{\mathbb{P}}
\newcommand{\mW}{\mathcal{W}}
\renewcommand{\hat}{\widehat}
\newcommand{\argmin}{\operatornamewithlimits{argmin}}
\newcommand{\mone}{\textbf{1}}
\newcommand{\indep}{\perp \!\!\! \perp}
\newcommand{\set}{B}
\newcommand{\loss}{\ell}
\newcommand{\regularizer}{R}
\DeclareMathOperator*{\esssup}{ess\,sup}
\title{Universality of estimators for high-dimensional linear models\\with block dependency}
\author{Toshiki Tsuda$^1$, Masaaki Imaizumi$^{2,3}$}
\address{$^1$Yale University, $^2$The University of Tokyo, $^3$ RIKEN Advanced Intelligence Project}
\date{ \today, \textit{Mail}: \url{toshiki.tsuda@yale.edu}, \url{imaizumi@g.ecc.u-tokyo.ac.jp}}
\begin{document}

\begin{abstract}
We study the universality property of estimators for high-dimensional linear models, which implies that the distribution of estimators is independent of whether the covariates follow a Gaussian distribution. Recent developments in high-dimensional statistics typically require covariates to strictly follow a Gaussian distribution to precisely characterize the properties of estimators. To relax this Gaussianity requirement, the existing literature has examined conditions under which estimators achieve universality. In particular, independence among the elements of the high-dimensional covariates has played a critical role. In this study, we focus on high-dimensional linear models with covariates exhibiting block dependence, where covariate elements can only be dependent within each block, and show that estimators for such models retain universality. Specifically, we prove that the distribution of estimators with Gaussian covariates can be approximated by the distribution of estimators with non-Gaussian covariates having the same moments under block dependence. To establish this result, we develop a generalized Lindeberg principle suitable for handling block dependencies and derive new error bounds for correlated covariate elements. We further demonstrate the universality result across several different estimators. 
\end{abstract}
\maketitle

\section{Introduction}

We consider an estimator for a linear model obtained by minimizing an empirical risk. 
Let $p \in \N$ denote the dimension and let $\mathbb{R}$ denote a space of output variables.
Let $n \in \N$ be the number of observations, and $(X_1,Y_1),...,(X_n,Y_n)$ be a set of independently and identically distributed $\R^p \times \mathbb{R}$-valued random observations generated from a statistical model:
\begin{align}
   Y_i = X_i^\top \theta_0 + \xi_i, ~ i= 1,...,n, \label{eq:linear_model}
\end{align}
where $\theta_0 \in \R^p$ is an unknown parameter and $\xi_i$ are independent noise variables. 
Given a convex loss function $\loss_0: \R \to \R$ and a convex regularizer $\regularizer_0: \R^p \to \R$, we consider the estimator $\hat{\theta}_{\mathbf{X}}$ via the empirical risk minimization problem:
\begin{equation}\label{def:theta_x}
    \hat{\theta}_{\mathbf{X}}\in\argmin_{\theta\in\R^p}\left\{\frac{1}{n}\sum_{i=1}^{n}\loss_{0}(Y_{i}-X_{i}^{\top}\theta)+\regularizer_{0}(\theta)\right\}.
\end{equation}
In this study, we show the universality of the distribution of this estimator $\hat{\theta}_{\mathbf{X}}$, that is, the distribution of the estimator does not depend on whether the covariates $X_i$ follow a Gaussian distribution.
Specifically, we will show that the universality holds even in the presence of weak dependencies among elements of the random covariate $X_i$.

The Gaussianity of covariates is a particularly important property in the analysis of high-dimensional statistics, which has been rapidly developing in recent years. Here, the term \textit{high-dimensional} refers to the so-called proportional high-dimensional regime, where both the number of parameters \( p \) in the statistical model and the amount of data \( n \) used for estimation diverge while maintaining a certain ratio. In this setting, several advanced frameworks allow the precise determination of the risk and asymptotic distribution of estimators. These tools include not only the random matrix theory \citep{marchenko1967distribution,dobriban2018high,hastie2022surprises} but also frameworks such as the convex Gaussian minimax (CGMT) theorem \citep{stojnic2013framework,thrampoulidis2015regularized,thrampoulidis2018precise,miolane2021distribution,salehi2019impact,montanari2019generalization,loureiro2021learning,liang2022precise,celentano2023lasso,koehler2021uniform,tsuda2023benign}, the approximate message passing \citep{donoho2009message,bayati2011dynamics,bayati2011lasso,rangan2011generalized,javanmard2013state,donoho2016high,sur2019modern,barbier2019optimal,mondelli2021approximate,zhao2022asymptotic,feng2022unifying,sawaya2023moment}, the leave-one-out \citep{el2013robust,bean2013optimal,el2018impact,sur2019likelihood,yadlowsky2021sloe}, and others \citep{mezard1987spin,mezard2009information,bellec2022observable,sawaya2024high}, which have been applied to analyze the properties of statistical models across a wide range of domains. Although these developments are remarkable, many of these frameworks heavily rely on the strict Gaussianity of covariates, which poses a practical limitation.

Relaxing the Gaussianity assumption on covariates in high-dimensional statistics is crucial for enhancing the practicality of high-dimensional methods. The property whereby the behavior of statistics or estimators does not depend on Gaussianity is known as \textit{universality}. Specifically, universality implies that properties of statistics under non-Gaussian covariates asymptotically match those under Gaussian covariates sharing the same moments. Several studies \citep{korada2011applications,montanari2017universality,hu2022universality,goldt2022gaussian,montanari2022universality,dudeja2022spectral, lahiry2023universality} have demonstrated that, in risk minimization problems, the minimized risk values using high-dimensional models exhibit universality.
The universality of approximate message passing algorithms has also been extensively studied in the context of rotationally invariant matrices \citep{fan2022approximate,dudeja2023universality,wang2024universality}.
Moreover, universality has been examined not only in terms of risks but also in terms of detailed properties of estimators required for statistical inference.
\cite{han2023universality} shows that estimators for high-dimensional parameters obtained through risk minimization exhibit universality.
\cite{han2024entrywise} demonstrates universality for estimators derived from first-order algorithms. Additionally, universality has been shown to hold within more specific high-dimensional models.

A challenge in the universality of estimators lies in the conditions required for non-Gaussian covariates, particularly the independence among their elements.
\citet{han2023universality} and \citet{han2024entrywise} state that, for estimators to exhibit universality, each element of the high-dimensional covariates must be independent. However, this condition often fails to hold for real high-dimensional covariates, posing an obstacle to applying universality results in various statistical models.
To address this issue, \cite{lahiry2023universality} demonstrates that the universality of estimators and risks holds under weaker assumptions, specifically, block dependence among elements of $p$-dimensional covariates with a diagonal covariance matrix.
Despite these advancements, relaxing independence for universal estimators remains a significant challenge.

In this study, we show that universality holds for estimators obtained by risk minimization even when elements of covariates exhibit dependencies. Specifically, we consider block dependence, where each element of the covariate vectors depends on at most $d$ other elements.
We demonstrate that when the dependency coefficient $d$ has a certain polynomial order in
$n$ or $p$, the distribution of estimators does not depend on the Gaussianity of covariates in the limit where $n$ and $p$ diverge while maintaining a proportional relationship. Our research extends the universality framework established by \cite{han2023universality}, incorporating the dependency structure considered in \cite{lahiry2023universality}.
As an application of our results, we establish universality for the distribution of robust estimators obtained by minimizing the absolute loss and the Huber loss, as well as of the lasso and ridge estimators.

As technical aspects, we develop the following proof technique to demonstrate the universality results presented above. Specifically, we introduce a generalized Lindeberg principle tailored to handle block dependence. In our approach, we measure the discrepancy between two design matrices by focusing on cells of dependent covariate elements. We then show that this error is bounded by a cubic polynomial in the number of correlated elements under block dependence, which is crucial for controlling the order of block dependence.
This technique enables a more precise evaluation of errors compared to previous studies, thus facilitating universality under higher-order block dependence.

We highlight the differences between our research and related studies.
\cite{lahiry2023universality} is closest to our work, introducing the notion of block dependence and deriving universality results for estimators and risks under some dependence. 
A key difference of ours from 
\cite{lahiry2023universality} is the universality under general covariance structure. 
This contribution is based on the leverage of the new CGMT by  \cite{akhtiamov2024novel}.
Another difference is the proof technique: while \cite{lahiry2023universality} employs Stein's lemma to establish the universality, we develop a proof based on the Lindeberg method.

\subsection{Notation}
For $x,x' \in \R$, we define their maximum as $x \vee x' := \max\{x,x'\}$.
For $z \in \N$, $[z] := \{1,2,...,z\}$ is a set of natural numbers no more than $z$.
For a vector $x = (x_1,...,x_d)^\top \in \R^d$, $x_j$ denotes the $j$-th element of $x$ for $j \in [d]$.
For a random variable $X$ and $q \geq 1$, $\|X\|_q := E[|X|^q]^{1/q}$ denotes its $q$-norm.
For a positive semidefinite matrix $A$, $\|x\|_{A}^{2}:=\langle x,Ax\rangle$ denotes the Mahalanobis (semi-)norm.
For $x \in \R^d$ and a function $h: \R^d \to \R$, $\partial_j h(x) := \partial h(x)  / \partial {x_j}$ denotes a partial derivative in terms of $x_j$ for $j \in [d]$. 
For a matrix $A \in \R^{d_1 \times d_2}$ and a function $h: \R^{d_1 \times d_2} \to \R$, $\partial_{j_1, j_2} h(A) := \partial h(A)  / \partial {A_{j_1,j_2}}$ denotes a partial derivative in terms of a $(j_1,j_2)$-element $A_{j_1,j_2}$ of $A$.  
For a function $f:\Omega \to \R$ with $\Omega \subset \R^d$, $\|f\|_{L^q} := (\int_\Omega |f(x)|^q dx)^{1/q}$ denotes the $L^q$-norm with $q \geq 1$. 
For a variable $x$, $C_x$ denotes a positive constant depending polynomially on $x$, whose value may change from line to line.
In addition, $a \lesssim_x b$ denotes $a \leq C_x b$, and  $a \lesssim b$ denotes $a \leq C b$ with a universal constant $C>0$.
$a \asymp b$ denotes both $a \lesssim b$ and $b \lesssim a$ holds.
For a matrix $A \in \R^{d \times d}$,  $\lambda_{\max}(A)$ denotes the largest eigenvalue of $A$.
We define the Moreau envelope of a function $f: \R \to \R$ as $e_f(x; \lambda) := \min_{z} (2\lambda)^{-1} \|x - z\|_2^2 + f(z)$ with $\lambda > 0$.

\section{Setup with block sparsity}

\subsection{Setup: estimator with convex risk minimization}
We consider an estimator with empirical loss minimization under general convex loss and convex regularization.
Recall that the random $n$ observations $(X_1,Y_1),...,(X_n,Y_n)$ follow a statistical model.
We assume that the $\R^p$-valued random vectors $X_1,...,X_n$ follow $\Ep[X_i] = 0$ and $\Ep[X_i X_i^\top] = \Sigma / n $ with a positive definite matrix $\Sigma \in \R^{p \times p}$.
Also, recall that the estimator $\hat{\theta}_\mathbf{X}$ is defined in \eqref{def:theta_x} with the convex loss function $\loss_0$ and the regularizer $f_0$.
We further define design matrices $\bfX = (X_1,...,X_n)^\top  \in \R^{n \times p}$ of the covariates.

For simplicity, we rewrite the minimization problem \eqref{def:theta_x} and consider an equivalent problem.
We define another minimizer $\hat{w}_{\mathbf{X}}$ by the redefined version of the minimization problem as follows:
\begin{equation}\label{loss::redef}
    \hat{w}_{\mathbf{X}}\in\argmin_{w\in \R^{p}}H_{\loss}(w,\mathbf{X}):=\argmin_{w\in \R^{p}}\left\{\frac{1}{n}\sum_{i=1}^{n}\loss_{i}(X_{i}^{\top}w)+\regularizer (w)\right\}.
\end{equation}
The minimizer has an explicit relation $\hat{w}_{\mathbf{X}} = \hat{\theta}_{\mathbf{X}}-\theta_{0}$ and
$\loss_{i}(X_{i}^{\top}w)$ and $\regularizer (w)$ in \eqref{loss::redef} correspond to $\loss_{0}(Y_{i} - X_{i}^{\top}\theta) = \ell_i(X_i^\top w)$ and $\regularizer_{0}(\theta_{0}+w)$ in \eqref{def:theta_x}, respectively.

\subsection{Basic assumptions} \label{sec:basic_assumption}

We introduce assumptions on the dimensionality $p$ and the loss function $\loss_i$. 
These assumptions are common in high-dimensional statistics and allow for a wide class of estimators.

\begin{assumption}[Proportionally high-dimension]\label{ass::regime}
    There exists $\tau\in(0,1)$ such that $\tau\leq p/n\leq 1/\tau$. 
\end{assumption}
This assumption restricts the high-dimensional setting we consider to the proportionally high-dimensional regime. 

\begin{assumption}[Loss function]\label{ass::loss}
There exist non-negative real numbers $q_{\ell}$ ($\ell = 0, 1, 2, 3$), constants $\bar{\rho} \in (0,1]$, and ${L_{\loss_{i}} \geq 1 : i \in [n]}$, along with two measurable functions $\mathcal{D}_{\loss}(\rho)$ and $\mathcal{M}_{\loss}(\rho): (0, \bar{\rho}) \rightarrow \mathbb{R}_{\geq 0}$, where $\mathcal{M}_{\loss}(\rho) \leq 1 \leq \mathcal{D}_{\loss}(\rho)$, that satisfy the following conditions:
\begin{enumerate}
\item[(i)] The functions $\loss_{i}$ grow at most polynomially, meaning for $i \in [n]$,
\begin{equation*}
\sup_{x \in \mathbb{R}} \frac{|\loss_{i}(x)|}{1 + |x|^{q_{0}}} \leq L_{\loss_{i}}.
\end{equation*}
\item[(ii)] There exist smooth approximations $\{\loss_{i;\rho}: \mathbb{R} \rightarrow \mathbb{R}_{\geq 0}\}_{\rho \in (0, \bar{\rho})}$ of $\loss_{i}$ such that (a) $\loss_{i;\rho} \in C^{3}(\mathbb{R})$, (b) $\max_{i \in [m]}\|\loss_{i;\rho} - \loss_{i}\|_{\infty} \leq \mathcal{M}_{\loss}(\rho)$, and (c) the derivatives of ${\loss_{i;\rho}}$ adhere to the following self-bounding property:
\begin{equation*}
\max_{\ell = 1, 2, 3} \max_{i \in [m]} \sup_{x \in \mathbb{R}} \frac{|\partial \loss_{i;\rho}(x)|}{1 + |\loss_{i;\rho}(x)|^{q_{\ell}}} \leq \mathcal{D}_{\loss}(\rho).
\end{equation*}
\end{enumerate}
In addition, we define
\(
    \mathbf{q}:=\max\{3q_{1},q_{1}+q_{2},q_{3}\}, \bar{\mathbf{q}}_{0}:=2\lceil{q_{0}\mathbf{q}/2\rceil}
\), and $Av(\{L_{\loss_{i}}^{\mathbf{q}}\}):=(1/n)\sum_{i=1}^{n}L_{\loss_{i}}^{\mathbf{q}}$. 
Moreover, $q$ refers to the set $\{q_{0}, q_{1}, q_{2}, q_{3}, \mathbf{q}, \bar{\mathbf{q}}_{0}\}$.
\end{assumption}

Assumption \ref{ass::loss} is exactly the same as an assumption on loss functions in \citet{han2023universality}. 
$ \mathcal{M}_{\loss}(\rho)$ and $\mathcal{D}_{\loss}(\rho)$ prescribe the smooth approximation of a loss function. In particular, when a loss function is in $C^{3}(\mathbb{R})$, $\mathcal{M}_{\loss}(\rho)$ is equal to $0$. 
Assumption \ref{ass::loss}-(i) prevents loss functions from growing too fast as $|x|$ goes to infinity. In many cases, $L_{\loss_{i}}$ is characterized by a noise term of a statistical model. Assumption \ref{ass::loss}-(ii) restricts the smoothness of loss functions. It allows loss functions to have a smooth approximation with respect to the supremum norm that increases at most polynomially as in Assumption \ref{ass::loss}-(ii) . 
We provide several common examples and their corresponding parameters $q$:
\begin{example}[Linear regression with convex losses] \label{ex:losses}
    We consider the linear regression model $Y_i = X_i^\top \theta_0 + \xi_i$ with a true parameter $\theta_0 \in \R^p$ and an independent noise variable $\xi_i$. With this model, we consider the following loss functions:
    \begin{itemize}
  \setlength{\parskip}{0cm}
  \setlength{\itemsep}{0cm}
        \item \textit{Squared loss}: We set $\ell_0(y,z) = (y-z)^2$ and $\ell_i(z) = (z - \xi_i)^2$, then we obtain $q_0 = 2, \mathbf{q} = 3/2$, and $\bar{\mathbf{q}}_0 = 4$. Furthermore, for any $\rho > 0$, we have $\mathcal{M}_{\loss}(\rho) = 0$ and $\mathcal{D}_{\loss}(\rho) \leq C$ with some constant $C > 1$.
        \item \textit{Absolute loss}: We set $\ell_0(y,z) = |y-z|$ and $\ell_i(z) = |z - \xi_i|$, then we obtain $q_0 = 1, \mathbf{q} = 0$, and $\bar{\mathbf{q}}_0 = 0$.
        Furthermore, for any $\rho > 0$, we have $\mathcal{M}_{\loss}(\rho) \leq \rho$ and $\mathcal{D}_{\loss}(\rho) \leq C \rho^{-2}$ with some constant $C > 1$.
        \item \textit{Huber loss}: We set $\ell_0(y,z) = (y-z)^2 \mone\{|y-z| \leq \eta\} +  (\eta |y-z| - \eta^2/2) \mone\{|y-z| > \eta\}$ with $\eta > 0$, then we obtain $q_0 = 1, \mathbf{q} = 0$, and $\bar{\mathbf{q}}_0 = 3$.
        Furthermore, for any $\rho > 0$, we have $\mathcal{M}_{\loss}(\rho) \leq \rho$ and $\mathcal{D}_{\loss}(\rho) \leq C \rho^{-2}$ with some constant $C > 1$.
    \end{itemize}
\end{example}

For the convex regularizer $R$, we introduce the following continuity measure: for $L, \delta > 0$, we define 
\begin{align}
    \mathcal{M}_{\regularizer}(L,\delta):=\sup_{w, w^{\prime}\in[-L, L]^{p}: \|w-w^{\prime}\|_{\infty}\leq\delta}|\regularizer (w)-\regularizer (w^{\prime})|.
\end{align}
This term measures the supremum of discrepancy in the sphere with radius $\delta$ in terms of the uniform norm.

We introduce assumptions about the distribution of the covariate \(X_i\). The assumption is for moments of the distribution of $X_i$ and the largest eigenvalue of the covariance matrix $\Sigma/n$ of $X_i$.
\begin{assumption}[Distribution of covariates]\label{ass::moment}
Let $q^{*}$ equal to $2^{(\bar{\mathbf{q}}_{0}+4)/2}$ and assume there exists $X^{0}$ such that $X_{i}=X^{0}_{i}/\sqrt{n}$ and the following inequality holds:
\begin{align}
    M:=\max&\left\{\max_{1\leq j\leq p}\|X^{0}_{i,j}\|_{q^{*}}, \lambda_{\max}(\Sigma)\right\} < \infty, \quad { \text{uniformly in $p$}}. \label{def:moments_x}
\end{align}
\end{assumption}

The boundedness of the maximum eigenvalue is trivially satisfied if the case with $\Sigma = I$, which has been exploited in the existing universality studies \citep{montanari2022universality,han2023universality}.
In our setup, since we consider a general covariance matrix $\Sigma$, we explicitly assume boundedness of the maximum eigenvalue. A similar assumption is introduced in \cite{lahiry2023universality}.

\subsection{Dependence structure}

We introduce the notion of \textit{block dependence} to describe the dependencies among elements of a $p$-dimensional random vector $X_i$.
In preparation, we define a $k$-\textit{partition} $\{\set_{j} \}_{j=1}^{k}$ of a set $[p]$, which is a sequence of $k$ cells $\set_{1},...,\set_{k} \subset [p]$ that are disjoint to each other and satisfy $\cup_{j=1}^{k} \set_{j}=[p]$.

\begin{definition}[Block dependence]
 A distribution of a $p$-dimensional random vector $X$ is called $d$-\textit{block dependent} with a dependence parameter $d$, if there exists $k \leq p$ and a $k$-partition 
 $\{\set_{j} \}_{j=1}^{k}$ of $[p]$ such that $\max_{j=1,...,k}|\set_{j}|\leq d$ and $X_{\ell}\indep X_{k}$ holds for $\ell\in \set_{i},k \in \set_{j}, i \neq j$.
\end{definition}

The above dependent structure assigns each element of $X$ to a specific cell $\set_{j}$ and restricts the maximum size of $\set_{j}$ to $d$. 
For any two random elements of $X_i$ in the same cell $\set_{j}$, the structure allows them to be dependent while two random variables that come from different cells must be independent. 
Note that the dependent elements need not be adjacent to each other.

In the following sections, we consider the situation where the observations $X_1,...,X_n$ follow the distribution which satisfies the $d$-block dependence.
Note that the observations are independently and identically distributed, hence the partition $\{\set_j\}_{j=1}^k$ is independent of $i=1,...,n$.

\begin{remark}[In related studies]
In several high-dimensional studies, block dependence and related concepts have been utilized. \cite{han2023universality} and \cite{han2024entrywise}, which deal with the universality of estimators, assume the independence of all elements, which can be considered as block dependence with \( d=1 \). The block dependence with \( d \geq 2 \) was introduced in \cite{lahiry2023universality}, demonstrating universality under \( d=o((n/\log n)^{1/5}) \). In the field of high-dimensional central limit theorems, \cite{chang2024central} introduces \( m \)-dependence, a related notion of the block dependence.    
\end{remark}

\section{Main result of universality}

\subsection{Gaussian analogy} \label{sec:gaussian_analogy}
We present the main result, the universality theorem, which demonstrates that the properties of the estimator $\hat{\theta}_{\mathbf{X}}$ are asymptotically equivalent to those in the case of Gaussian covariates. 
As a preparation, we define the corresponding Gaussian covariates to $\bfX$, as well as the risk and estimator for it.

Let $G_1,...,G_n$ be a set of i.i.d. vectors from $N(0_p, \Sigma / n)$, and its design matrix $\bfG = (G_1,...,G_n)^\top \in \R^{n \times p}$. 
Using the Gaussian vectors, we define Gaussian analogies of the empirical risk and the estimator as
\begin{equation}\label{loss::redef_gauss}
    \hat{w}_{\mathbf{G}}\in\argmin_{w\in \R^{p}}H_{\loss}(w,\mathbf{G}):=\argmin_{w\in \R^{p}}\left\{\frac{1}{n}\sum_{i=1}^{n}\loss_{i}(G_{i}^{\top}w)+\regularizer (w)\right\}.
\end{equation}
Note that numerous studies have investigated the asymptotic behavior of the estimator and the risk in the proportionally high-dimensional limit.

\subsection{Universality of empirical risk} \label{sec:universality_risk}

We show the universality of the minimized empirical risk under $d$-block dependence. Specifically, we show that the distributions of \(\min_w H_{\loss}(w,\mathbf{X})\) and \(\min_w H_{\loss}(w,\mathbf{G})\) are asymptotically equivalent. 
The following theorem evaluates the distance between the two distributions under general \(d\), using any $C^{3}(\mathbb{R})$ class function $g$:
\begin{theorem}\label{main_universality}
     Suppose Assumptions  \ref{ass::regime}, \ref{ass::loss} and \ref{ass::moment} hold.
     Also, suppose that the distribution of $X_1,...,X_n$ is $d$-block dependent.
     Then, for sufficiently large $n$, there exists some $C_0 =C_0(\tau,q,M)>0$ such that the following hold: for any $\mathcal{S}_{n}\subset[-L_{n}, L_{n}]^{p}$ with $L_{n}\geq 1$, and any $g\in C^{3}(\mathbb{R})$, we have
     \begin{align}
         \left|E\left[g\left(\min_{w\in \mathcal{S}_{n}}H_{\loss}(w,\mathbf{X})\right)\right]-E\left[g\left(\min_{w\in \mathcal{S}_{n}}H_{\loss}(w,\mathbf{G})\right)\right]\right|\leq C_{0}\cdot K_{g}\cdot r_{R}(L_{n}). \label{ineq:main_exp}
     \end{align}
Here $K_{g}:=1 +\max_{\ell\in[0:3]}\|g^{(\ell)}\|_{\infty}$ and $r_R(L_n)$ is defined by
\begin{align*}
    r_R(L_n):=\inf_{\rho\in(0,\bar{\rho})} \Biggl\{ &\mathcal{M}_{\loss}(\rho) +\mathcal{D}^{3}_{\loss}(\rho) \\
    &  \times \inf_{\delta\in(0,\omega_{n})}\left[\mathcal{M}_{\regularizer}(L_{n},\delta)+ Av(\{L_{\loss_{i}}^{\mathbf{q}}\})^{1/3}\cdot L_{n}^{(\bar{\mathbf{q}}_{0}+3)/{3}}\log^{2/3}_{+}(L_{n}/\delta)\sigma^{1/3}_{n}\right]\Biggr\},
\end{align*}
where we define $\omega_{n}:=n^{-(q_{0}q_{1}+3)/2}\sigma_{n}$ and 
\begin{align}
    \sigma_{n}:=\left(\frac{pd^{3+(\bar{\mathbf{q}}_{0}/2)}}{n^{5/2}}+\frac{pd^{2}}{n^{3/2}}+\frac{pd^{\bar{\mathbf{q}}_{0}+2}}{n^{(\bar{\mathbf{q}}_{0}+3)/2}}  \right). \label{def:sigma}
\end{align}
Consequently, for any $z\in\mathbb{R}$, $\varepsilon>0$, we obtain
\begin{equation*}
    \Pr\left(\min_{w\in \mathcal{S}_{n}}H_{\loss}(w,\mathbf{X})>z+3\varepsilon\right)\leq \Pr\left(\min_{w\in \mathcal{S}_{n}}H_{\loss}(w,\mathbf{G})>z+\varepsilon\right)+C_{1}(\max\{1,\varepsilon^{-3}\})r_{R}(L_{n}).
\end{equation*}
and 
\begin{equation*}
    \Pr\left(\min_{w\in \mathcal{S}_{n}}H_{\loss}(w,\mathbf{G})>z+3\varepsilon\right)\leq \Pr\left(\min_{w\in \mathcal{S}_{n}}H_{\loss}(w,\mathbf{X})>z+\varepsilon\right)+C_{1}(\max\{1,\varepsilon^{-3}\})r_{R}(L_{n}).
\end{equation*}
Here $C_1 > 0$ is a constant multiple of $C_0$.
\end{theorem}

Theorem \ref{main_universality} shows that the difference in expectations of the minimized risk functions is characterized by $r_{R}(L_{n})$, which includes the properties of the loss function $\loss_i$ and the regularizer $R(\cdot)$. 
The asymptotic equivalence of the distributions is guaranteed if $r_{R}(L_{n})$ converges to zero, which largely depends on the convergence of $\sigma_{n}$.

Note that the inequality \eqref{ineq:main_exp} is conditional on the noise variable $\xi_1,...,\xi_n$: the left-hand side is a conditional expectation on the noise and the term $r_{R}(L_{n})$ in the right-hand side is a $\xi_i$-dependent random variable. 
Even in this case, we can give a non-probabilistic upper bound on this term using the weak law of large numbers reflecting the shape of the loss function. 
The specific upper bound is given in Section \ref{sec:application} for the applications.

From the result of Theorem \ref{main_universality}, we can discuss the value of $d$ required for the error to converge to zero.
The following corollary summarizes the result:
\begin{corollary} \label{cor:universality_risk}
    Consider one of the loss functions presented in Example \ref{ex:losses} and the regularizer $R(\cdot) = \|\cdot\|_1$ or $R(\cdot) = \|\cdot\|_2^2$.
    Suppose that the conditions in Theorem \ref{main_universality}, $\bar{\mathbf{q}}_{0}\leq6$, and $Av(\{L_{\loss_{i}}^{\mathbf{q}}\}) = O(1)$ hold. 
    Set $L_n = O( n^{\alpha}\log^\kappa n )$ with $\alpha \in [0,1/(2\bar{\mathbf{q}}_{0} + 6)] $ and $ \kappa \geq 0$ as $n \to \infty$.
    Then, with setting $d = o(n^{1/4 - (\bar{\mathbf{q}}_{0} + 3)\alpha / 2}/ \log^{3/2+1/2(\bar{\mathbf{q}}_{0}+3)\kappa } n)$ as $n \to \infty$, for any $\mathcal{S}_{n}\subset[-L_{n}, L_{n}]^{p}$ with $L_{n}\geq 1$, and any $g\in C^{3}(\mathbb{R})$, we have the following as $n,p \to \infty$:
     \begin{equation*}
         \left|E\left[g\left(\min_{w\in \mathcal{S}_{n}}H_{\loss}(w,\mathbf{X})\right)\right]-E\left[g\left(\min_{w\in \mathcal{S}_{n}}H_{\loss}(w,\mathbf{G})\right)\right]\right| \to 0.
     \end{equation*}
\end{corollary}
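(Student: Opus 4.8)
The plan is to derive Corollary \ref{cor:universality_risk} directly from Theorem \ref{main_universality} by showing that the error bound $C_0 \cdot K_g \cdot r_R(L_n)$ vanishes under the stated scalings of $L_n$ and $d$. Since $C_0 = C_0(\tau,q,M)$ is a fixed constant and $K_g$ is fixed for a given test function $g$, the whole task reduces to showing $r_R(L_n) \to 0$. First I would unwind the definition of $r_R(L_n)$ and choose a convenient value of the smoothing parameter $\rho$ for each loss in Example \ref{ex:losses}. For the squared loss, $\mathcal{M}_\loss(\rho)=0$ and $\mathcal{D}_\loss(\rho)\le C$, so $\rho$ may be fixed; for the absolute and Huber losses, $\mathcal{M}_\loss(\rho)\le\rho$ and $\mathcal{D}_\loss(\rho)\le C\rho^{-2}$, so one takes $\rho = \rho_n \to 0$ slowly (e.g. a small negative power of $n$) so that $\mathcal{M}_\loss(\rho_n)\to 0$ while the factor $\mathcal{D}_\loss^3(\rho_n) = O(\rho_n^{-6})$ is absorbed by the polynomially decaying $\sigma_n^{1/3}$ term. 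This step is routine bookkeeping: one verifies that a single polynomial rate for $\rho_n$ simultaneously kills $\mathcal{M}_\loss(\rho_n)$ and leaves a net power of $n$ that is still negative after multiplication by $\sigma_n^{1/3}$.

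Next I would estimate $\sigma_n$. Using Assumption \ref{ass::regime} ($p \asymp n$), each of the three terms in \eqref{def:sigma} is a power of $n$ times a power of $d$: the dominant behaviour is governed by $n \cdot d^{3+\bar{\mathbf{q}}_0/2} / n^{5/2} = d^{3+\bar{\mathbf{q}}_0/2} n^{-3/2}$ and $n \cdot d^{\bar{\mathbf{q}}_0+2}/n^{(\bar{\mathbf{q}}_0+3)/2} = d^{\bar{\mathbf{q}}_0+2} n^{-(\bar{\mathbf{q}}_0+1)/2}$, with the middle term $d^2 n^{-1/2}$ lower order. Under $\bar{\mathbf{q}}_0 \le 6$ one checks that the first term dominates, so $\sigma_n \lesssim d^{3+\bar{\mathbf{q}}_0/2} n^{-3/2}$ up to the lower-order corrections, hence $\sigma_n^{1/3} \lesssim d^{1 + \bar{\mathbf{q}}_0/6} n^{-1/2}$. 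Then I would bound the bracketed quantity in $r_R(L_n)$: the regularizer term $\mathcal{M}_\regularizer(L_n,\delta)$ I would handle by choosing $\delta = \delta_n$ small enough that $\mathcal{M}_\regularizer(L_n,\delta_n)$ is negligible — this is legitimate because $\delta$ ranges over $(0,\omega_n)$ and the $\log_+(L_n/\delta)$ penalty for small $\delta$ is only logarithmic, so taking $\delta_n$ a sufficiently negative power of $n$ costs only extra $\log n$ factors while making the regularizer continuity term as small as desired (one may also simply assume $R$ is, say, Lipschitz on $[-L_n,L_n]^p$ in the coordinatewise sense so that $\mathcal{M}_\regularizer(L_n,\delta_n)\le C\delta_n$). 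The remaining term is $Av(\{L_{\loss_i}^{\mathbf{q}}\})^{1/3} L_n^{(\bar{\mathbf{q}}_0+3)/3}\log_+^{2/3}(L_n/\delta_n)\sigma_n^{1/3}$; using $Av(\{L_{\loss_i}^{\mathbf{q}}\}) = O(1)$, $L_n = O(n^\alpha \log^\kappa n)$, and the bound on $\sigma_n^{1/3}$, this is of order
\begin{equation*}
    n^{(\bar{\mathbf{q}}_0+3)\alpha/3} \, d^{1+\bar{\mathbf{q}}_0/6} \, n^{-1/2} \cdot \mathrm{polylog}(n).
\end{equation*}

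Finally I would substitute $d = o(n^{1/4 - (\bar{\mathbf{q}}_0+3)\alpha/2} / \log^{6 + (2\bar{\mathbf{q}}_0+6)\kappa} n)$. Raising to the power $1 + \bar{\mathbf{q}}_0/6 = (\bar{\mathbf{q}}_0+6)/6$ gives $d^{1+\bar{\mathbf{q}}_0/6} = o\big(n^{(\bar{\mathbf{q}}_0+6)/24 \,-\, (\bar{\mathbf{q}}_0+3)(\bar{\mathbf{q}}_0+6)\alpha/12}\big)$ up to polylog factors, and one checks that the net exponent of $n$,
\begin{equation*}
    \frac{\bar{\mathbf{q}}_0+3}{3}\alpha + \frac{\bar{\mathbf{q}}_0+6}{24} - \frac{(\bar{\mathbf{q}}_0+3)(\bar{\mathbf{q}}_0+6)}{12}\alpha - \frac12,
\end{equation*}
is strictly negative for all $\alpha \in [0, 1/(2\bar{\mathbf{q}}_0+6)]$ (at $\alpha=0$ it equals $(\bar{\mathbf{q}}_0+6)/24 - 1/2 = (\bar{\mathbf{q}}_0 - 6)/24 \le 0$, with equality only at $\bar{\mathbf{q}}_0 = 6$, in which case the chosen polylog exponent $6 + (2\bar{\mathbf{q}}_0+6)\kappa$ in the denominator of $d$ is precisely tuned to make the surviving $\mathrm{polylog}(n)$ factor vanish; for $\alpha > 0$ the exponent is strictly negative). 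Hence the bracket, and therefore $r_R(L_n)$, tends to $0$, and Theorem \ref{main_universality} gives the claimed convergence. The main obstacle I anticipate is not conceptual but bookkeeping: one must track the interacting powers of $n$ coming from $\rho_n$ (through $\mathcal{D}_\loss^3$), from $\delta_n$ (through the logarithm and the regularizer term), from $L_n$, and from $\sigma_n$, and verify that the exponent arithmetic closes simultaneously across all three loss examples — in particular that the borderline case $\bar{\mathbf{q}}_0 = 6$ with $\alpha = 0$ is saved exactly by the logarithmic factors, and that the lower-order terms in $\sigma_n$ never dominate under $\bar{\mathbf{q}}_0 \le 6$ and $p \asymp n$.
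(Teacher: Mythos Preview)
Your overall plan---reduce to showing $r_R(L_n)\to 0$, choose $\rho$ and $\delta$, and then balance exponents---is exactly the paper's route. However, there is a genuine error in your identification of the dominant term in $\sigma_n$, and it propagates through the rest of the argument.

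With $p\asymp n$ the three summands in \eqref{def:sigma} become $d^{3+\bar{\mathbf q}_0/2}n^{-3/2}$, $d^{2}n^{-1/2}$, and $d^{\bar{\mathbf q}_0+2}n^{-(\bar{\mathbf q}_0+1)/2}$. You assert the middle one is lower order, but in fact it is the largest: for instance $(\text{first})/(\text{middle})=d^{1+\bar{\mathbf q}_0/2}/n$, and under $d=o(n^{1/4})$ and $\bar{\mathbf q}_0\le 6$ one has $d^{1+\bar{\mathbf q}_0/2}=o(n^{(2+\bar{\mathbf q}_0)/8})=o(n)$, so this ratio is $o(1)$; similarly $(\text{third})/(\text{middle})=d^{\bar{\mathbf q}_0}/n^{\bar{\mathbf q}_0/2}=o(1)$. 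Hence the valid bound is $\sigma_n\asymp d^{2}n^{-1/2}$ and $\sigma_n^{1/3}\asymp d^{2/3}n^{-1/6}$, not $d^{1+\bar{\mathbf q}_0/6}n^{-1/2}$. Your claimed bound is strictly smaller than the true size whenever $\bar{\mathbf q}_0<6$, so it is not an upper bound and the subsequent exponent calculation proves nothing. With the correct $\sigma_n^{1/3}$ the bracket is of order $n^{(\bar{\mathbf q}_0+3)\alpha/3}\cdot d^{2/3}\cdot n^{-1/6}\cdot\mathrm{polylog}(n)$, and requiring this to vanish forces exactly $d=o(n^{1/4-(\bar{\mathbf q}_0+3)\alpha/2})$ up to logs---which is the threshold in the corollary, and is what the paper obtains.

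A second issue is your choice of $\rho_n$ as ``a small negative power of $n$'' for the absolute and Huber losses. Once the correct $\sigma_n$ is used, the bracket $\zeta_n$ decays only like a negative power of $\log n$ (there is no polynomial slack in $n$ at the stated threshold for $d$), so $\rho_n^{-6}\zeta_n$ would diverge for any polynomial $\rho_n$. The paper handles this by optimizing $\rho+\rho^{-6}\zeta_n$ over $\rho$, giving $\rho_n=\zeta_n^{1/7}$ and $r_R(L_n)\lesssim\zeta_n^{1/7}=o(1)$; you should do the same, or at least take $\rho_n$ as a suitable negative power of $\log n$.
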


We first note the setup of $L_n$.
As seen from Section 3 in \citet{han2023universality} or Section 7 in \citet{lahiry2023universality}, typical estimator examples satisfy $\bar{\mathbf{q}}_{0}\leq 6$, and also $L_n = O(\log^\kappa n)$ with $\kappa \geq 0$ is enough to cover the support of the estimator distribution. 
In this situation, we can set $\alpha = 0$ and obtain an order $d = o(n^{1/4})$ up to logarithmic factors that makes the discrepancy converge to zero in the proportionally high-dimensional limit. 

We discuss the relationship between Theorem \ref{main_universality} and previous results of the existing studies. 
Theorem \ref{main_universality} can be regarded as a generalization of Theorem 2.3 in \cite{han2023universality}. \cite{han2023universality} considers the case where all elements of $X_i$ are independent, i.e., $d$-block dependent with \(d=1\), and the statement of their theorem is equivalent to Theorem \ref{main_universality} with \(\sigma_n\) replaced by \(n^{-1/2}\). Additionally, Theorem 4.1 of \cite{lahiry2023universality} has a similar universality statement under the block dependence, where \(d=o((n/\log n)^{1/5})\) holds. Note that the permissible order of \cite{lahiry2023universality} is more restrictive than that allowed in Theorem \ref{main_universality} and Corollary \ref{cor:universality_risk} with $\alpha = 0$.
Furthermore, \citet{chang2024central} shows the universality of the maximum of high-dimensional random vectors with dependent random variables.
Although there is some technical similarity, the result by \citet{chang2024central} is not comparable with our result, because their setup is different from our setting.
For details, see Remark \ref{remark:connection_gaussian_approx}.

\subsection{Universality of estimator}
We show the universality of the estimator, which is the optimizer of the empirical risk. Specifically, we demonstrate that the  estimator $\hat{w}_{\mathbf{X}}$ defined in \eqref{loss::redef} and the estimator $\hat{w}_{\mathbf{G}}$ under the Gaussian covariate asymptotically have the same distribution.

\begin{theorem}\label{main_universality_structure}
     Suppose Assumptions  \ref{ass::regime}, \ref{ass::loss} and \ref{ass::moment} hold.
    Also, suppose that the distribution of $X_1,...,X_n$ is $d$-block dependent.
     Fix a measurable subset $\mathcal{S}_{n}\subset\mathbb{R}^{p}$. Suppose there exist $z\in\mathbb{R}, \rho_{0}>0$, $L_{n}\geq 1$ and $\varepsilon_n\in[0,1/4)$ such that the following hold: 
     \begin{enumerate}
         \item[(i)] Both $\|\hat{w}_{\mathbf{G}}\|_{\infty}$ and $\|\hat{w}_{\mathbf{X}}\|_{\infty}$ increase moderately in the sense that 
         \begin{equation*}
             \max\left\{\Pr(\|\hat{w}_{\mathbf{G}}\|_{\infty}>L_{n}), \Pr(\|\hat{w}_{\mathbf{X}}\|_{\infty}>L_{n})\right\}\leq \varepsilon_{n},
         \end{equation*}
          \item[(ii)] $\hat{w}_{\mathbf{G}}$ does not satisfy the specific structure $\mathcal{S}_{n}$ in the sense that 
         \begin{equation*}
             \max\left\{\Pr\left(\min_{w\in \mathbb{R}^{p}}H_{\loss}(w,\mathbf{G})\geq z+\rho_{0}\right), \Pr\left(\min_{w\in \mathcal{S}_{n}}H_{\loss}(w,\mathbf{G})\leq z+2\rho_{0}\right) \right\}\leq \varepsilon_{n}.
         \end{equation*}
     \end{enumerate} 
Then, $\hat{w}_{\mathbf{X}}$ also does not satisfy the specific structure $\mathcal{S}_{n}$ with high probability:
\begin{equation*}
    \Pr\left(\hat{w}_{\mathbf{X}}\in\mathcal{S}_{n}\right)\leq 4\varepsilon_{n}+C_{0}\max\{1,\rho_{0}^{-3}\}r_{R}(L_{n}).
\end{equation*}
Here, $r_R(L_n)$ is defined in Theorem \ref{main_universality}, and $C_0>0$ depends on $\tau, q, M$ only.
\end{theorem}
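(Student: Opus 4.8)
The plan is to deduce Theorem~\ref{main_universality_structure} from the risk-universality estimate of Theorem~\ref{main_universality} by a standard ``localization'' argument: if the unconstrained minimum of $H_\loss(\cdot,\mathbf{G})$ is well below $z+\rho_0$ while the $\mathcal{S}_n$-constrained minimum is well above it, then constraining the minimization to $\mathcal{S}_n$ is ``expensive'' for the Gaussian design, and by transferring this gap to the $\mathbf{X}$-design we learn that $\hat w_{\mathbf{X}}$ cannot lie in $\mathcal{S}_n$ without the $\mathbf{X}$-risk jumping by roughly $\rho_0$, which is a low-probability event. First I would reduce to the case where both optimizers lie in $[-L_n,L_n]^p$: on the event $\{\|\hat w_{\mathbf{X}}\|_\infty\le L_n\}$, which fails with probability at most $\varepsilon_n$ by hypothesis (i), we have $\min_{w\in\mathbb{R}^p}H_\loss(w,\mathbf{X})=\min_{w\in[-L_n,L_n]^p}H_\loss(w,\mathbf{X})$, and similarly the constrained problem over $\mathcal{S}_n$ agrees with the one over $\mathcal{S}_n\cap[-L_n,L_n]^p$; this makes Theorem~\ref{main_universality}, which only controls risks minimized over subsets of $[-L_n,L_n]^p$, applicable.

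Next I would set up the key chain of inequalities. Observe the deterministic implication: if $\hat w_{\mathbf{X}}\in\mathcal{S}_n$ and $\|\hat w_{\mathbf{X}}\|_\infty\le L_n$, then $\min_{w\in\mathcal{S}_n\cap[-L_n,L_n]^p}H_\loss(w,\mathbf{X})=\min_{w\in[-L_n,L_n]^p}H_\loss(w,\mathbf{X})$, so the constrained and unconstrained $\mathbf{X}$-risks coincide. Hence
\begin{align*}
\Pr(\hat w_{\mathbf{X}}\in\mathcal{S}_n)
&\le \varepsilon_n+\Pr\!\left(\min_{w\in\mathcal{S}_n\cap[-L_n,L_n]^p}H_\loss(w,\mathbf{X})=\min_{w\in[-L_n,L_n]^p}H_\loss(w,\mathbf{X})\right)\\
&\le \varepsilon_n+\Pr\!\left(\min_{w\in[-L_n,L_n]^p}H_\loss(w,\mathbf{X})> z+\rho_0\right)+\Pr\!\left(\min_{w\in\mathcal{S}_n\cap[-L_n,L_n]^p}H_\loss(w,\mathbf{X})\le z+\rho_0\right),
\end{align*}
where the last line splits according to whether the common value exceeds $z+\rho_0$. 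For the first probability, I apply the second displayed consequence of Theorem~\ref{main_universality} (with $\mathbf{G}$ and $\mathbf{X}$ swapped, $\varepsilon=\rho_0$, and the threshold near $z$), together with hypothesis (ii) which bounds $\Pr(\min_{w\in\mathbb{R}^p}H_\loss(w,\mathbf{G})\ge z+\rho_0)$ by $\varepsilon_n$; for the second probability, I apply the first displayed consequence of Theorem~\ref{main_universality} with $\mathcal{S}_n\cap[-L_n,L_n]^p$ as the constraint set, $\varepsilon=\rho_0$, and threshold near $z$, together with hypothesis (ii) which bounds $\Pr(\min_{w\in\mathcal{S}_n}H_\loss(w,\mathbf{G})\le z+2\rho_0)$ by $\varepsilon_n$ (noting that restricting to $[-L_n,L_n]^p$ only raises the constrained minimum, so this event is contained in the $\mathbf{G}$-version). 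Each application contributes one $\varepsilon_n$ and one term of the form $C_1\max\{1,\rho_0^{-3}\}r_R(L_n)$, and tallying the four $\varepsilon_n$'s (one from localization, one from hypothesis (ii) in each of the two transfers, plus bookkeeping) and absorbing $C_1$ into $C_0$ yields the stated bound $4\varepsilon_n+C_0\max\{1,\rho_0^{-3}\}r_R(L_n)$.

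The main obstacle is getting the $\varepsilon_n$-bookkeeping and the placement of the $\rho_0$-margins exactly right so that the three scalar thresholds ($z$, $z+\rho_0$, $z+2\rho_0$) line up with the ``$z+3\varepsilon$ versus $z+\varepsilon$'' shape of the consequences in Theorem~\ref{main_universality}; in particular one must check that the $2\rho_0$ gap in hypothesis (ii) is precisely what is consumed by the two Lindeberg transfers (each of which loses up to $2\rho_0$ in the threshold), and that the restriction of $\mathcal{S}_n$ to the box $[-L_n,L_n]^p$ is harmless because it can only \emph{increase} the constrained minimum, so the relevant ``bad'' events shrink rather than grow. A secondary point to verify is that the event equality ``$\hat w_{\mathbf{X}}\in\mathcal{S}_n$ and $\|\hat w_{\mathbf{X}}\|_\infty\le L_n$ implies the constrained and unconstrained boxed minima agree'' is genuinely deterministic and uses only convexity of $H_\loss(\cdot,\mathbf{X})$ implicitly through the fact that the minimizer is the argmin; no further regularity is needed. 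Beyond these arrangement issues the argument is routine, since all the analytic heavy lifting (the generalized Lindeberg principle, the block-dependence error bounds) is already packaged inside Theorem~\ref{main_universality}.
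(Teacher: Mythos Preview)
Your approach is essentially the paper's: localize to the box via hypothesis~(i), observe that $\hat w_{\mathbf X}\in\mathcal S_n\cap[-L_n,L_n]^p$ forces the boxed constrained and unconstrained minima to coincide, split at a threshold, and transfer each tail to $\mathbf G$ via Theorem~\ref{main_universality}. Two small points to tighten. First, with your split at $z+\rho_0$ and $\varepsilon=\rho_0$ the margins do not close: the available gap in hypothesis~(ii) is only $\rho_0$ (from $z+\rho_0$ to $z+2\rho_0$), while each transfer in Theorem~\ref{main_universality} costs $2\varepsilon$ in the threshold, so you must take $\varepsilon$ of order $\rho_0/4$ (and absorb the factor $4^3$ into $C_0$), or equivalently split at two different levels as the paper does ($z+3\rho_0$ and $z+6\rho_0$). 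Second, the fourth $\varepsilon_n$ is not mere ``bookkeeping'': after transferring the unconstrained tail to $\mathbf G$ you land at $\Pr(\min_{\|w\|_\infty\le L_n}H_\loss(w,\mathbf G)\ge\cdot)$, and since $\min_{\mathbb R^p}\le\min_{\mathrm{box}}$ you cannot pass to $\min_{\mathbb R^p}$ for free --- you must invoke hypothesis~(i) for $\hat w_{\mathbf G}$, which is where that extra $\varepsilon_n$ is spent.
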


This theorem shows that if \( r_R(L_n) \) converges to zero under the conditions given in Theorem \ref{main_universality}, the distributions of the two estimators become asymptotically equivalent. 
As discussed in Section \ref{sec:universality_risk}, the convergence of \( r_R(L_n) \) is ensured when \( d = o(n^{1/4}) \) holds up to logarithmic factors.
It is summarized in the following statement:
\begin{corollary} \label{cor:corollary_distribution}
    Consider one of the loss functions presented in Example \ref{ex:losses} and the regularizer $R(\cdot) = \|\cdot\|_1$ or $R(\cdot) = \|\cdot\|_2^2$.
    Suppose that the conditions of Theorem \ref{main_universality_structure} hold with a fixed $\mS_n \subset \R^p$ and $\varepsilon_n \in [0,1/4)$.
    $\bar{\mathbf{q}}_{0}\leq6$, and $Av(\{L_{\loss_{i}}^{\mathbf{q}}\}) = O(1)$ hold.
    Set $L_n = O( n^{\alpha}\log^\kappa n )$ with $\alpha \in [0,1/(2\bar{\mathbf{q}}_{0} + 6)] $ and $ \kappa \geq 0$ as $n \to \infty$.
    Then, with setting $d = o(n^{1/4 - (\bar{\mathbf{q}}_{0} + 3)\alpha / 2}/ \log^{3/2+1/2(\bar{\mathbf{q}}_{0}+3)\kappa } n)$ as $n \to \infty$, we have the following as $n,p \to \infty$:
     \begin{equation*}
         |\Pr\left(\hat{w}_{\mathbf{X}}\in\mathcal{S}_{n}\right) - \Pr\left(\hat{w}_{\mathbf{G}}\in\mathcal{S}_{n}\right)| \leq 4\varepsilon_n + o(1).
     \end{equation*}
\end{corollary}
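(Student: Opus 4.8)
\noindent\textbf{Proof plan for Corollary \ref{cor:corollary_distribution}.}
The plan is to combine the one-sided estimate of Theorem \ref{main_universality_structure} with the elementary fact that hypothesis (ii) alone forces $\Pr(\hat{w}_{\mathbf{G}}\in\mathcal{S}_{n})$ to be small, and then to verify that the prescribed rates of $d$ and $L_{n}$ drive $r_{R}(L_{n})$ to zero; the last step is just the $r_{R}(L_{n})\to0$ part of the proof of Corollary \ref{cor:universality_risk}.

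First I would invoke Theorem \ref{main_universality_structure} with the given $z,\rho_{0},L_{n},\varepsilon_{n}$, which yields at once
\begin{equation*}
    \Pr(\hat{w}_{\mathbf{X}}\in\mathcal{S}_{n})\leq 4\varepsilon_{n}+C_{0}\max\{1,\rho_{0}^{-3}\}\,r_{R}(L_{n}).
\end{equation*}
For the Gaussian side I would not symmetrize the theorem; instead, on the event $E_{n}:=\{\min_{w\in\R^{p}}H_{\loss}(w,\mathbf{G})<z+\rho_{0}\}\cap\{\min_{w\in\mathcal{S}_{n}}H_{\loss}(w,\mathbf{G})>z+2\rho_{0}\}$, which by (ii) and a union bound has $\Pr(E_{n})\geq1-2\varepsilon_{n}$, every minimizer $\hat{w}_{\mathbf{G}}$ of the unconstrained risk satisfies $H_{\loss}(\hat{w}_{\mathbf{G}},\mathbf{G})<z+2\rho_{0}<\min_{w\in\mathcal{S}_{n}}H_{\loss}(w,\mathbf{G})$, hence $\hat{w}_{\mathbf{G}}\notin\mathcal{S}_{n}$; thus $\Pr(\hat{w}_{\mathbf{G}}\in\mathcal{S}_{n})\leq2\varepsilon_{n}$. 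Since $|a-b|\leq\max\{a,b\}$ for nonnegative $a,b$, the two bounds combine to give
\begin{equation*}
    \bigl|\Pr(\hat{w}_{\mathbf{X}}\in\mathcal{S}_{n})-\Pr(\hat{w}_{\mathbf{G}}\in\mathcal{S}_{n})\bigr|\leq 4\varepsilon_{n}+C_{0}\max\{1,\rho_{0}^{-3}\}\,r_{R}(L_{n}),
\end{equation*}
so everything reduces to showing $r_{R}(L_{n})\to0$ ($\rho_{0}$ is fixed, so $\max\{1,\rho_{0}^{-3}\}=O(1)$).

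This last estimate is the one that proves Corollary \ref{cor:universality_risk}. Using Assumption \ref{ass::regime} to replace $p$ by a constant multiple of $n$ in \eqref{def:sigma}, $\sigma_{n}$ is at most a $\tau$-dependent constant times $d^{3+\bar{\mathbf{q}}_{0}/2}n^{-3/2}+d^{2}n^{-1/2}+d^{\bar{\mathbf{q}}_{0}+2}n^{-(\bar{\mathbf{q}}_{0}+1)/2}$; for $\bar{\mathbf{q}}_{0}\leq6$ a comparison of exponents shows the middle term dominates for polynomially bounded $d$, so $d=o(n^{1/4-(\bar{\mathbf{q}}_{0}+3)\alpha/2}/\log^{6+(2\bar{\mathbf{q}}_{0}+6)\kappa}n)$ yields $\sigma_{n}=o(n^{-(\bar{\mathbf{q}}_{0}+3)\alpha}\log^{-c'}n)$ for some $c'>0$. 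Plugging in $L_{n}=O(n^{\alpha}\log^{\kappa}n)$, $Av(\{L_{\loss_{i}}^{\mathbf{q}}\})=O(1)$, and a polynomially small $\delta<\omega_{n}$ (legitimate since $\omega_{n}$ is itself only polynomially small, and then $\log^{2/3}_{+}(L_{n}/\delta)\asymp\log^{2/3}n$ while $\mathcal{M}_{\regularizer}(L_{n},\delta)\to0$), the term $Av(\{L_{\loss_{i}}^{\mathbf{q}}\})^{1/3}L_{n}^{(\bar{\mathbf{q}}_{0}+3)/3}\log^{2/3}_{+}(L_{n}/\delta)\sigma_{n}^{1/3}$ is $o(1)$, the powers of $n$ cancelling by the choice of the $d$-rate and a negative power of $\log n$ surviving. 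Taking the infimum over $\rho\in(0,\bar{\rho})$ finishes it: for the squared loss $\mathcal{M}_{\loss}\equiv0$ and $\mathcal{D}_{\loss}=O(1)$, so $r_{R}(L_{n})$ is the above $o(1)$ quantity; for the absolute and Huber losses (Example \ref{ex:losses}), $\mathcal{M}_{\loss}(\rho)\leq\rho$ and $\mathcal{D}_{\loss}^{3}(\rho)\leq C\rho^{-6}$, and balancing $\rho$ against $\rho^{-6}$ times that bracket makes $r_{R}(L_{n})$ the $1/7$-th power of it, again $o(1)$.

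The main obstacle is bookkeeping rather than anything conceptual: one must confirm that with $\bar{\mathbf{q}}_{0}\leq6$ it is $d^{2}n^{-1/2}$ --- not $d^{\bar{\mathbf{q}}_{0}+2}n^{-(\bar{\mathbf{q}}_{0}+1)/2}$ or $d^{3+\bar{\mathbf{q}}_{0}/2}n^{-3/2}$ --- that controls $\sigma_{n}$, and then track the logarithmic exponents so that the $\log^{6+(2\bar{\mathbf{q}}_{0}+6)\kappa}n$ in the denominator of the $d$-rate, after the power $1/3$ in $\sigma_{n}^{1/3}$, the factor $L_{n}^{(\bar{\mathbf{q}}_{0}+3)/3}$, the $\log^{2/3}_{+}$ term, and the $1/7$ exponent in the absolute/Huber case, still leaves a strictly negative power of $\log n$. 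The probabilistic reduction in the first two paragraphs is short, and the extra $\max\{1,\rho_{0}^{-3}\}$ factor is harmless for fixed $\rho_{0}$.
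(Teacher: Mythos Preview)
Your proposal is correct and follows essentially the same route as the paper, which proves Corollaries~\ref{cor:universality_risk} and~\ref{cor:corollary_distribution} together by verifying $r_{R}(L_{n})\to 0$ under the stated conditions via the same bookkeeping on $\sigma_{n}$ and the squared/absolute/Huber split you describe. The one place where you go beyond the paper is in spelling out, directly from condition~(ii), that $\Pr(\hat{w}_{\mathbf{G}}\in\mathcal{S}_{n})\leq 2\varepsilon_{n}$, which is needed to pass from the one-sided conclusion of Theorem~\ref{main_universality_structure} to the two-sided bound of the corollary; the paper's proof leaves this step implicit.
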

We provide explanations on conditions (i) and (ii) in Theorem \ref{main_universality_structure}. 
Condition (i) requires the boundedness of the estimators $\hat{w}_\mathbf{X}$ and $\hat{w}_\mathbf{G}$, which is demonstrated depending on each example. 
Condition (ii) can be investigated by a limiting value of the minimum of the empirical risk in the proportionally high-dimension regime with a Gaussian covariate, which also depends on specific examples. 
These two conditions are identical to those provided in Theorem 2.4 of \cite{han2023universality}, and more detailed discussions will be given in the subsequent section on applications.

\section{Outline of proof with Lindeberg principle}

In this section, we outline the proof of Theorem \ref{main_universality}. To emphasize the fact that Theorem \ref{main_universality} still holds even though the Gaussianity of $\bfG$ is violated, we replace $\bfG$ with another design matrix $\mathbf{W} = (W_1,...,W_n)^\top \in \R^{n \times p}$ that has the same first and second moments as $\mathbf{X}$, that is, $\Ep[W_i] = \Ep[X_i] = 0 $ and $\Ep[W_i W_i^\top ] = \Ep[X_i X_i^\top ] = \Sigma / n$ hold for $i = 1,...,n$. 

We prepare an additional notation to present the outline.
We define a smoothly approximated version of the empirical risk as $H_{\loss_\rho}(w,\mathbf{X}):=\frac{1}{n}\sum_{i=1}^{n}\loss_{i,\rho}(X_{i}^{\top}w)+\regularizer (w)$.
Let $\bar{H}_{\loss}$ denote the unnormalized function $H_{\loss}$, namely $n\cdot H_{\loss}(w,\mathbf{X})$.
In addition, we define a finite subset $\mathcal{S}_{n,\delta} \subset \mathcal{S}_n$ depending on $\delta > 0$, which will be specified later.
Furthermore, we introduce the soft minimum function that approximates the minimum function: for $\beta>0$ and $N\in\mathbb{N}$, we define the soft minimum function $F_{\beta}$ as
    \begin{equation*}
        F_{\beta}(x):=-\frac{1}{\beta}\log\left(\sum_{j=1}^{N}\exp(-\beta x_{j})\right), \quad x\in\mathbb{R}^{N}.
    \end{equation*}

\subsection{Overview}

We focus on the difference $E[g(\min_{w\in \mathcal{S}_{n}} H_{\loss}(w,\mathbf{X}))]-E[g(\min_{w\in \mathcal{S}_{n}} H_{\loss}(w,\mathbf{W}))]$ with $g \in C^3(\R)$, which is bounded in Theorem \ref{main_universality} with replacing $\mathbf{G}$ by $\mathbf{W}$.
The difference in expectations can be divided into four terms:
\begin{align}
    &\left|E\left[g\left(\min_{w\in \mathcal{S}_{n}} H_{\loss}(w,\mathbf{X})\right)\right]-E\left[g\left(\min_{w\in \mathcal{S}_{n}} H_{\loss}(w,\mathbf{W})\right)\right]\right| \notag \\
    &\leq 2\max_{\mathbf{Z}\in\{\bfX, \mathbf{W}\}} \left|E\left[g\left(\min_{w\in \mathcal{S}_{n}}  H_{\loss}(w,\mathbf{Z})\right)\right]-E\left[g\left(\min_{w\in \mathcal{S}_{n}} H_{\loss_{\rho}}(w,\mathbf{Z})\right)\right]\right|\label{eq::smooth} \\
    &\quad + 2\max_{\mathbf{Z}\in\{\bfX, \mathbf{W}\}} \left|E\left[g\left(\min_{w\in \mathcal{S}_{n,\delta}} H_{\loss_{\rho}}(w,\mathbf{Z})\right)\right]-E\left[g\left(\min_{w\in \mathcal{S}_{n}} H_{\loss_{\rho}}(w,\mathbf{Z})\right)\right]\right| \label{eq::discrete} \\
    &\quad + 2\max_{\mathbf{Z}\in\{\bfX, \mathbf{W}\}} \left|E\left[g\left(\min_{w\in \mathcal{S}_{n,\delta}} H_{\loss_{\rho}}(w,\mathbf{Z})\right)\right]-E\left[g\left(n^{-1}F_{\beta} \left((\bar{H}_{\loss_{\rho}}(w,\mathbf{Z}))_{w\in \mS_{n,\delta}}\right)\right)\right]\right| \label{eq::softmin} \\
    &\quad + \left|E\left[g\left(n^{-1}F_{\beta} \left((\bar{H}_{\loss_{\rho}}(w,\bfX))_{w\in \mS_{n,\delta}}\right)\right)\right]-E\left[g\left(n^{-1}F_{\beta} \left((\bar{H}_{\loss_{\rho}}(w,\mathbf{W}))_{w\in \mS_{n,\delta}}\right)\right)\right]\right| \label{eq::moment} \\
    & =: I + II + III + IV.
\end{align}
where we define $\bar{H}=n\times H$. 
The terms $I,II,III$, and $IV$ each have their own intuition.
$I$ is on a smooth approximation error of the loss function by $\loss_{\rho}$, $II$ is about the discretization on the parameter space from $\mathcal{S}_{n}$ to $\mathcal{S}_{n,\delta}$, $III$ is an approximation error by the soft minimum function $F_{\beta}(\cdot)$, and $IV$ is an effect of approximating the function via moment matching technique.

Our technical novelty lies in studying $IV$ with the block dependency. 
Specifically, we develop the Lindeberg-type approach to derive an upper bound deriving an upper bound for $IV$ in terms of $d$.
The remaining terms $I,II$, and $III$ are evaluated by the proof developed in \cite{han2023universality}, respectively.
We first state the proof outline for studying the term  $IV$ and then explain how to bound the remaining terms $I,II$, and $III$.

\subsection{Bound for $IV$: Lindeberg principle with block dependence}

In this section, we derive an approximation result for $d$-block dependent random variables $X_{i}$ and $W_{i}$. This proof employs two crucial lemmas. The first lemma is a generalization of the Lindeberg principle, applicable to $d$-block dependent random variables. The second lemma provides an upper bound for the sum of $\sum_{j=1}^k|\set_{j}|^m$ for $m \geq 0$. Given that the sum of $|\set_{i}|$ equals the dimension of covariates, we show that, for any $m \in \mathbb{N}$,  the sum of $|\set_{i}|^{m}$ can be bounded above by a term of order $pd^{m-1}$ with two key techniques (i) block replacement, and (ii) construction of the aligned blocks.

\subsubsection{Generalized Lindeberg principle}
We develop a generalized Lindeberg principle, which is a method used in evaluating a discrepancy between random vectors by swapping their elements and evaluating the error using the Taylor approximation. 
Specifically, we employ this principle to evaluate the discrepancy of random matrices with block dependence.
The independence of $\set_{j}$ from any other $\set_{i}$ allows us to upper bound the difference in expectations for any function $f \in C^{3}(\mathbb{R}^{n \times p})$. 

We provide a key result of the Lindeberg principle with the block dependence.
We define $p$-dimensional random vector $X_{i}^{(\set_{k})} := (X_{i,1} \mone\{1 \in \set_{k}\},...,X_{i,p} \mone\{p \in \set_{k}\})^\top \in \R^{p}$.
Then, we obtain the following lemmas.
\begin{lemma}\label{Lipschitz_higherorder}
   Consider two sequences of i.i.d. $\R^p$-valued random vectors $X_1,...,X_n$ and $W_1,...,W_n$. 
   Suppose that each $\{X_{i}\}_{i=1}^{n}$ and $\{W_{i}\}_{i=1}^{n}$ is $d$-block dependent, and both $E[X_{i}]=E[W_{i}]$ and $E[X_{i}X_{i}^{\top}]=E[W_{i}W_{i}^{\top}]$ hold. 
   Then, for any $1\leq j\leq k$ and  $f\in C^{3}(\mathbb{R}^{n\times p})$ satisfying both inequalities $\max_{i \in [n], m \in [p]}|E[\partial_{i,m} f(\mathbf{Z}^{0}_{i,j})]|<\infty$ and $ \max_{i \in [n], m,m^{\prime} \in [p]}|E[\partial_{i,m} \partial_{i,m^{\prime}} f(\mathbf{Z}^{0}_{i,j})]| < \infty$, we have
   \begin{align*}
       &|E[f(\mathbf{X})]-E[f(\mathbf{W})]| \\
       &\leq \frac{1}{2}\sum_{i=1}^{n}\sum_{j=1}^{k}\left|E\left[\int^{1}_{0}(1-t)^2\sum_{\ell \in B_j} \sum_{\ell' \in B_j} \sum_{\ell'' \in B_j} X_{i,\ell} X_{i,\ell'} X_{i,\ell''}\partial_{i, \ell}\partial_{ i, \ell'} \partial_{ i, \ell''}f(\mathbf{Z}_{i,j}^{0}+tX^{(\set_{j})}_{i})dt\right]\right| \\
        &~~ +\frac{1}{2}\sum_{i=1}^{n}\sum_{j=1}^{k}\left|E\left[\int^{1}_{0}(1-t)^2\sum_{\ell \in B_j} \sum_{\ell' \in B_j} \sum_{\ell'' \in B_j} W_{i,\ell} W_{i,\ell'} W_{i,\ell''} \partial_{i, \ell}\partial_{ i, \ell'} \partial_{ i, \ell''}f(\mathbf{Z}_{i,j}^{0}+tW^{(\set_{j})}_{i})dt \right]\right|,
   \end{align*}
   where 
   \begin{align*}
       \mathbf{Z}_{i,j}^{0}&:=\left(X_{1},\cdots,X_{i-1},(W_{i}-X_{i}^{(\set_{j})}+\sum_{\ell=1}^{j}(X_{i}^{(\set_{\ell})}-W_{i}^{(\set_{\ell})})),W_{i+1},\cdots,W_{n}\right)^{\top}.
   \end{align*}
\end{lemma}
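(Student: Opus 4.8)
\textbf{Proof plan for Lemma \ref{Lipschitz_higherorder}.}

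The plan is to run a telescoping (Lindeberg) swap, but organized at the level of \emph{blocks} rather than individual coordinates. Fix a row index $i$ and the $k$-partition $\{\set_j\}_{j=1}^k$ of $[p]$. I would define a sequence of hybrid matrices interpolating from $\mathbf{X}$ to $\mathbf{W}$ by first replacing all rows $1,\dots,i-1$ by the corresponding $X$-rows and rows $i+1,\dots,n$ by the $W$-rows, and then, within row $i$, swapping the coordinate-blocks $\set_1,\set_2,\dots,\set_k$ one at a time from the $X$-version to the $W$-version. Concretely, the intermediate configuration after the first $j$ blocks of row $i$ have been swapped is governed by the base point $\mathbf{Z}_{i,j}^0$ in the statement: it carries $X_1,\dots,X_{i-1}$ in the earlier rows, $W_{i+1},\dots,W_n$ in the later rows, and in row $i$ the vector that agrees with $W_i$ on blocks $\set_1,\dots,\set_j$ and with $X_i$ on the remaining blocks, with the block $\set_j$ itself removed (to be re-inserted as either $X_i^{(\set_j)}$ or $W_i^{(\set_j)}$). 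The total difference $E[f(\mathbf{X})]-E[f(\mathbf{W})]$ then telescopes into $\sum_{i=1}^n\sum_{j=1}^k\big(E[f(\mathbf{Z}_{i,j}^0+X_i^{(\set_j)})]-E[f(\mathbf{Z}_{i,j}^0+W_i^{(\set_j)})]\big)$.

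Next, for each $(i,j)$ I would Taylor-expand $f$ around $\mathbf{Z}_{i,j}^0$ in the $(i,\cdot)$ coordinates indexed by $\set_j$, with the exact integral form of the remainder: writing $\phi(t)=f(\mathbf{Z}_{i,j}^0+tX_i^{(\set_j)})$, one has $\phi(1)=\phi(0)+\phi'(0)+\tfrac12\phi''(0)+\tfrac12\int_0^1(1-t)^2\phi'''(t)\,dt$, and similarly for $W$. The zeroth-order terms $\phi(0)$ cancel between the $X$- and $W$-expansions since the base point $\mathbf{Z}_{i,j}^0$ is common. For the first-order terms, the key point is that $X_i^{(\set_j)}$ is a function only of coordinates of $X_i$ lying in $\set_j$, hence independent of all other blocks of $X_i$ and of every other row; since $\mathbf{Z}_{i,j}^0$ involves only rows $\neq i$ and blocks $\neq j$ of row $i$ (through the terms $X_i^{(\set_\ell)}-W_i^{(\set_\ell)}$ for $\ell\le j$, which touch blocks $\set_1,\dots,\set_j$ — here I need to be a little careful and either arrange the telescoping so the partial sum excludes $\set_j$, as written, or note that $\set_j$-coordinates of the base point are zero), the block-independence lets me factor the expectation: $E[\sum_{\ell\in\set_j}X_{i,\ell}\,\partial_{i,\ell}f(\mathbf{Z}_{i,j}^0)]=\sum_{\ell\in\set_j}E[X_{i,\ell}]\,E[\partial_{i,\ell}f(\mathbf{Z}_{i,j}^0)]$, and likewise for $W$; these agree because $E[X_{i,\ell}]=E[W_{i,\ell}]$. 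For the second-order terms, the same independence argument gives $E[\sum_{\ell,\ell'\in\set_j}X_{i,\ell}X_{i,\ell'}\,\partial_{i,\ell}\partial_{i,\ell'}f(\mathbf{Z}_{i,j}^0)]=\sum_{\ell,\ell'\in\set_j}E[X_{i,\ell}X_{i,\ell'}]\,E[\partial_{i,\ell}\partial_{i,\ell'}f(\mathbf{Z}_{i,j}^0)]$ — crucially, here I \emph{must} keep the pair $(\ell,\ell')$ inside the same block so the covariance $E[X_{i,\ell}X_{i,\ell'}]$ is not split (pairs straddling two blocks are independent and would factor, but they simply don't appear because both indices range over $\set_j$); then $E[X_{i,\ell}X_{i,\ell'}]=E[W_{i,\ell}W_{i,\ell'}]$ makes the second-order $X$- and $W$-terms cancel as well. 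Only the third-order integral remainders survive, and taking absolute values and summing over $i,j$ yields exactly the claimed bound, with the $\tfrac12$ coming from the $\tfrac12\int_0^1(1-t)^2(\cdot)\,dt$ in the remainder. The finiteness hypotheses on $E[\partial_{i,j}f]$ and $E[\partial_{i,j}\partial_{i,k}f]$ are what license the factorizations and the vanishing of the low-order terms (and a standard dominated-convergence / polynomial-growth argument justifies interchanging expectation and the $t$-integral).

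The main obstacle I anticipate is bookkeeping rather than a deep idea: getting the base point $\mathbf{Z}_{i,j}^0$ exactly right so that (a) the telescoping sum genuinely collapses to $E[f(\mathbf{X})]-E[f(\mathbf{W})]$, and (b) at each step $\mathbf{Z}_{i,j}^0$ is measurable with respect to a $\sigma$-algebra \emph{independent} of block $\set_j$ of row $i$ — this independence is the linchpin for killing the zeroth/first/second-order terms, and the slightly awkward expression $W_i - X_i^{(\set_j)} + \sum_{\ell=1}^j(X_i^{(\set_\ell)}-W_i^{(\set_\ell)})$ in row $i$ is precisely engineered so that its $\set_j$-coordinates are zero (hence the added $tX_i^{(\set_j)}$ or $tW_i^{(\set_j)}$ is the only dependence on $\set_j$) while its coordinates on $\set_1,\dots,\set_{j-1}$ come from $W_i$ and on $\set_{j+1},\dots,\set_k$ come from $X_i$. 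Once that is pinned down, the three cancellation steps and the triangle-inequality bound on the remainder are routine. I would also double-check the edge indices $j=1$ and $j=k$ and the row boundaries $i=1,n$ to confirm the telescope has no leftover terms.
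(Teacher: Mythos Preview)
Your approach is exactly the paper's: a block-wise Lindeberg telescope, third-order Taylor expansion with integral remainder around the common base point $\mathbf{Z}_{i,j}^0$, and cancellation of the first- and second-order terms via block-independence plus matching of $E[X_i]$ and $E[X_iX_i^\top]$. One small bookkeeping slip to fix when you carry it out: with the paper's definition of $\mathbf{Z}_{i,j}^0$, row $i$ carries $X_i$-values on $\set_1,\dots,\set_{j-1}$ and $W_i$-values on $\set_{j+1},\dots,\set_k$ (not the other way around), so the swap runs from the $W$-row toward the $X$-row as $j$ increases --- but this does not affect the argument and is precisely the ``bookkeeping'' check you already flagged.
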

This lemma is technically important for our result. Specifically, we bound the difference $|E[f(\mathbf{X})]-E[f(\mathbf{W})]|$ by the Taylor expansion, then we can cancel out interactions between variables belonging to different cells $B_j$ using the block dependence. Thus, if the cells $B_j$ are not excessively large, we can properly control for the effects of the dependencies.

 This result indicates that the difference in the distributions of $\mathbf{X}$ and $\mathbf{W}$ is evaluated based on the size of \( B_j \) of the block dependencies. Since there is a triple summation over \( B_j \) within the upper bound of the lemma, the cubic value \( |B_j|^3 \) becomes critical. If we have \( |B_j| = 1 \), which  corresponds to the case where all elements of \( X_i \) and \( W_i \) are independent, the expectation terms of the upper bound remain bounded. 
In the following sections, it becomes crucial to assess  \( \sum_j |B_j|^m \) with $m \geq 0$ so that this upper bound does not diverge.

\subsubsection{Bounding sum of block size}
In this section, we provide an upper bound for the sum of the powers of the size of cells $\sum_{j=1}^{k}|\set_{j}|^{m}$. This bound is useful when evaluating the sum of the cubic values $|B_j|^3$ introduced in the previous section. The result and the proof are presented below:
\begin{lemma}\label{dependent_group_sum}
    Consider the $k$-partition $\{\set_{j} \}_{j=1}^{k}$ of $[p]$, which is a sequence of $k$ cells $\set_{1},...,\set_{k} \subset [p]$ that are disjoint to each other and satisfy $\cup_{j=1}^{k} \set_{j}=[p]$.
    Also, suppose that $\max_{j=1,...,k}|\set_{j}|\leq d$ holds with $1 \leq d \leq p$.
    Then, for any $m\in\mathbb{N}$, we have
\begin{equation}\label{\set_{j}-inequality}
    \sum_{j=1}^{k}|\set_{j}|^{m}\leq  pd^{m-1}.
\end{equation}
\end{lemma}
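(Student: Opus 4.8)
\textbf{Proof proposal for Lemma \ref{dependent_group_sum}.}

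The plan is to bound $\sum_{j=1}^k |\set_j|^m$ by trading off two regimes: when the number of large cells is small, the total mass constraint $\sum_j |\set_j| = p$ forces there to be few of them, and when cells are small, $|\set_j|^m \le d^{m-1}|\set_j|$ gives a clean bound. The starting observation is the trivial inequality $|\set_j|^m \le d^{m-1}|\set_j|$, valid since $|\set_j| \le d$. Summing this over $j$ immediately gives $\sum_{j=1}^k |\set_j|^m \le d^{m-1}\sum_{j=1}^k |\set_j| = d^{m-1} p$. This already yields the claimed bound (with constant $1$ rather than $4$), so the genuinely hard part is presumably not the counting inequality itself but reconciling it with the two ``key techniques'' the authors advertise — block replacement and construction of aligned blocks — which suggests the lemma as actually used is applied to a \emph{modified} partition (one built to make the Lindeberg swapping argument valid, e.g.\ merging or re-aligning cells so that dependency cells are contiguous or compatible across the $n$ observations), and the factor $4$ absorbs the blow-up incurred by that reconstruction.

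Accordingly, I would structure the argument in two layers. First, state and prove the elementary bound $\sum_j |\set_j|^m \le p d^{m-1}$ exactly as above — one line. Second, address why the constant is $4$: if the lemma is invoked on a refined partition obtained by re-partitioning $[p]$ into at most, say, $2k$ or $4k$ cells each still of size $\le d$ (to guarantee the alignment property needed upstream), then the same one-line estimate applied to the refined partition $\{\set_j'\}$ gives $\sum_j |\set_j'|^m \le d^{m-1}\sum_j |\set_j'| = d^{m-1} p$ still — the total mass $p$ is invariant under re-partitioning — so in fact even the refinement does not cost a constant, and the $4$ is simply a safe slack. I would therefore present the clean proof and remark that any constant $\ge 1$ suffices, keeping $4$ for consistency with how the bound is quoted elsewhere.

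The one subtlety to handle carefully is the edge case $m = 1$, where $d^{m-1} = d^0 = 1$ and the bound reads $\sum_j |\set_j| = p \le 4p$, trivially true; and the case where some cells are empty or $k < p$, which is harmless since empty cells contribute $0$. I would also note explicitly that the inequality $|\set_j|^m \le d^{m-1}|\set_j|$ requires $|\set_j| \ge 1$ for the cells that matter (empty ones contribute nothing) together with $|\set_j| \le d$, both given by hypothesis.

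\begin{proof}
Since $|\set_j| \le d$ for every $j \in [k]$, and $|\set_j|^m = |\set_j|^{m-1}\cdot|\set_j| \le d^{m-1}|\set_j|$ for each $j$ (this holds trivially when $\set_j = \emptyset$, and when $|\set_j| \ge 1$ it follows from $|\set_j|^{m-1} \le d^{m-1}$), we obtain
\begin{equation*}
    \sum_{j=1}^{k}|\set_{j}|^{m} \le d^{m-1}\sum_{j=1}^{k}|\set_{j}| = d^{m-1}\,|[p]| = p\,d^{m-1} \le 4\,p\,d^{m-1},
\end{equation*}
where we used that $\{\set_j\}_{j=1}^k$ is a partition of $[p]$, so $\sum_{j=1}^k |\set_j| = p$. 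This proves the claim.
\end{proof}
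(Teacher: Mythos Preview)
Your proof is correct and in fact sharper than the paper's, yielding constant $1$ rather than $4$. The paper takes a genuinely different and more elaborate route: it explicitly constructs a coarsened partition $\{D_\ell\}_{\ell=1}^s$ by leaving cells of size $>\lfloor d/2\rfloor$ alone and greedily merging the small cells until each merged cell has size in the range $[\lfloor d/2\rfloor+1,\,d]$; this forces $s\le 4p/d$, and then the bound follows from $\sum_j|\set_j|^m\le\sum_\ell|D_\ell|^m\le s\,d^m\le 4pd^{m-1}$, using superadditivity of $x\mapsto x^m$ under merging for the first inequality. Your observation that $|\set_j|^m\le d^{m-1}|\set_j|$ short-circuits all of this: it exploits the size constraint $|\set_j|\le d$ and the mass constraint $\sum_j|\set_j|=p$ simultaneously in one line, with no need to rebalance the partition. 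The paper's ``aligned blocks'' construction is advertised in the surrounding text as one of the two key techniques, but for the inequality as stated it is unnecessary machinery; your speculation that the factor $4$ is slack is exactly right.
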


\begin{proof}
A straightforward calculation gives
\begin{align*}
    \sum_{j=1}^{k}|\set_{j}|^{m}&= \sum_{j=1}^{k}|\set_{j}||\set_{j}|^{m-1} \leq \sum_{j=1}^{k}|\set_{j}|d^{m-1}=pd^{m-1}.
\end{align*}
where the inequality holds from  the assumption $\max_{j=1,...,k}|\set_{j}|\leq d$ and the last equality follows from the property of cells. 
\end{proof}

The important consequence in Lemma \ref{dependent_group_sum} is that the upper bound holds independently of \( k \). In other words, as long as we determine the upper bound \( d \) of the block size, we do not need to care how many blocks are used to divide the \( p \) variables.
The equality holds  when $k=p/d$ with $|B_j| = d$ for all $j = 1,..., k$.

\subsubsection{Unifying these results}
Using Lemma \ref{Lipschitz_higherorder} and Lemma \ref{dependent_group_sum} above, we bound the term $IV$, which describes the difference between the distributions of the minimized risk with the datasets.
The statement holds for $g \in C^3(\R^{n \times p})$.
\begin{proposition}\label{step4}
 Suppose Assumptions \ref{ass::regime},  \ref{ass::loss}, and \ref{ass::moment} hold. 
 Consider two sequences of i.i.d. $\R^p$-valued random vectors $X_1,...,X_n$ and $W_1,...,W_n$. 
   Suppose that each $\{X_{i}\}_{i=1}^{n}$ and $\{W_{i}\}_{i=1}^{n}$ is $d$-block dependent, and $E[X_{i}]=E[W_{i}]$ and $E[X_{i}X_{i}^{\top}]=E[W_{i}W_{i}^{\top}]$ hold. 
 Then, there exists some  $C_{0}=C_{0}(\tau,q,M)>0$ such that for any finite set $\mathcal{W
}\subset[-L_{n},L_{n}]^{p}$ with $L_{n}\geq 1$, any $\rho\in (0,1)$, and any $g\in C^{3}(\mathbb{R}^{n\times p})$ satisfying $\max_{i \in [n], j \in [p]} |E[\partial_{i,j} g(\mathbf{Z}^{0}_{i})]|<\infty$ and $ \max_{i \in [n], j,k \in [p]} |E[\partial_{i,j} \partial_{i,k} g(\mathbf{Z}^{0}_{i})]| < \infty$, we have
   \begin{align*}
    &|E[g(n^{-1}F_{\beta}((\bar{H}_{\loss_{\rho}}(w,\mathbf{X})))_{w\in\mathcal{W
}}]-E[g(n^{-1}F_{\beta}((\bar{H}_{\loss_{\rho}}(w,\mathbf{W})))_{w\in\mathcal{W
}}]| \\
       &\leq C_{0}(\tau,q,M)  K_{g}\max\{1,\beta^{2}\} L_{n}^{\bar{\mathbf{q}}_{0}+3}D^{3}_{\loss}(\rho)Av(\{L^{\mathbf{q}}_{\loss_{i}}\})\sigma_{n}.
\end{align*}
\end{proposition}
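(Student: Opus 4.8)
The plan is to apply the generalized Lindeberg principle of Lemma~\ref{Lipschitz_higherorder} to the composite test function
\[
  f(\mathbf{Z}) := g\!\left(n^{-1}F_{\beta}\big((\bar{H}_{\loss_{\rho}}(w,\mathbf{Z}))_{w\in\mathcal{W}}\big)\right),\qquad \mathbf{Z}\in\R^{n\times p},
\]
and then to estimate, block by block, the third-order remainder it produces, finally collapsing the block sums with Lemma~\ref{dependent_group_sum}. First I would check that $f\in C^{3}(\R^{n\times p})$ and satisfies the two integrability hypotheses of Lemma~\ref{Lipschitz_higherorder}: this follows because $\loss_{i,\rho}\in C^{3}$, the soft minimum $F_{\beta}$ is smooth, $g\in C^{3}$, and the polynomial growth in Assumption~\ref{ass::loss}(i) combined with the $q^{*}$-th moment bound of Assumption~\ref{ass::moment} makes the relevant derivatives integrable. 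Since $W_{i}$ and $X_{i}$ share their first two moments, Lemma~\ref{Lipschitz_higherorder} reduces the claim to bounding
\[
  \sum_{i=1}^{n}\sum_{j=1}^{k}\sum_{\ell,\ell',\ell''\in B_{j}} E\!\left[\,|X_{i,\ell}X_{i,\ell'}X_{i,\ell''}|\cdot\sup_{t\in[0,1]}\big|\partial_{i,\ell}\partial_{i,\ell'}\partial_{i,\ell''}f(\mathbf{Z}_{i,j}^{0}+tX_{i}^{(B_{j})})\big|\,\right]
\]
together with the twin obtained by replacing $X$ by $W$ everywhere; I would treat the two symmetrically, the $W$-bound being identical by the moment matching.

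Second, I would bound $|\partial_{i,\ell}\partial_{i,\ell'}\partial_{i,\ell''}f|$ pointwise by the chain rule. Writing $\phi(\mathbf{Z})=n^{-1}F_{\beta}((\bar{H}_{\loss_{\rho}}(w,\mathbf{Z}))_{w\in\mathcal{W}})$ and using that $\partial_{i,\ell}\bar{H}_{\loss_{\rho}}(w,\mathbf{Z})=\loss_{i,\rho}'(Z_{i}^{\top}w)\,w_{\ell}$ (the regularizer drops out), that the gradient of $F_{\beta}$ is a probability vector while its second- and third-order partial derivatives contribute factors $O(\beta)$ and $O(\beta^{2})$, and that $|w_{\ell}|\le L_{n}$, one arrives — after absorbing the soft-min weights into the loss-derivative factors via the self-bounding property of Assumption~\ref{ass::loss}(ii), exactly as in the $d=1$ argument of \cite{han2023universality}, which is what keeps the bound free of any dependence on $|\mathcal{W}|$ — at a bound of the shape
\[
  \big|\partial_{i,\ell}\partial_{i,\ell'}\partial_{i,\ell''}f(\mathbf{Z})\big|\;\lesssim\; K_{g}\max\{1,\beta^{2}\}\,n^{-1}L_{n}^{3}\,\mathcal{D}_{\loss}^{3}(\rho)\,L_{\loss_{i}}^{\mathbf{q}}\,\big(1+|Z_{i}^{\top}w^{\star}|^{\bar{\mathbf{q}}_{0}}\big),
\]
where $w^{\star}$ denotes the relevant point of $\mathcal{W}$, the exponent $\mathbf{q}=\max\{3q_{1},q_{1}+q_{2},q_{3}\}$ is the worst total order of the three derivative factors produced by the chain rule, and $\bar{\mathbf{q}}_{0}=2\lceil q_{0}\mathbf{q}/2\rceil\ge q_{0}\mathbf{q}$ is its convenient even majorant after converting $|\loss_{i,\rho}|$ via Assumption~\ref{ass::loss}(i).

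The heart of the argument — and the step I expect to be the main obstacle — is then estimating $E[\,|X_{i,\ell}X_{i,\ell'}X_{i,\ell''}|\,(1+|Z_{i}^{\top}w^{\star}|^{\bar{\mathbf{q}}_{0}})\,]$ at the mixed evaluation point $\mathbf{Z}_{i,j}^{0}+tX_{i}^{(B_{j})}$, whose $i$-th row equals $X_{i}$ on the blocks preceding $B_{j}$, $tX_{i}$ on $B_{j}$, and $W_{i}$ on the blocks after $B_{j}$. I would split $Z_{i}^{\top}w^{\star}$ at this point as $R_{i,j}+V_{i,j}$, where $V_{i,j}=t\sum_{a\in B_{j}}X_{i,a}w^{\star}_{a}$ is the within-block-$B_{j}$ part (a weighted sum of at most $|B_{j}|$ coordinates, correlated with the triple product) and $R_{i,j}$ is the contribution of the coordinates outside $B_{j}$ (a sum of independent block-contributions, hence independent of $X_{i}^{(B_{j})}$). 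Expanding $(R_{i,j}+V_{i,j})^{\bar{\mathbf{q}}_{0}}$ binomially: every term carrying a factor of $R_{i,j}$ factorizes over that independence, and on $R_{i,j}$ one uses a Rosenthal-type moment inequality for sums of independent summands together with $E[(Z_{i}^{\top}w^{\star})^{2}]\le\lambda_{\max}(\Sigma)\|w^{\star}\|_{2}^{2}/n\lesssim L_{n}^{2}$ — noting that a single block of size $\le d$ can behave like one heavy weighted term — while the purely within-block term $V_{i,j}^{\bar{\mathbf{q}}_{0}}$ is expanded into $|B_{j}|^{\bar{\mathbf{q}}_{0}}$ monomials each estimated by H\"older; here $q^{*}=2^{(\bar{\mathbf{q}}_{0}+4)/2}$ is exactly large enough that every moment of $X^{0}_{i,a}$ invoked, up to order about $3+\bar{\mathbf{q}}_{0}$ and its doublings from the occasional Cauchy--Schwarz on a mixed term, exists. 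Because each $X_{i,a}=X^{0}_{i,a}/\sqrt{n}$, a product of $m$ coordinates contributes a factor $n^{-m/2}$; in particular the triple product alone already contributes $n^{-3/2}$, each within-block factor contributes one extra power of $|B_{j}|$, one of $L_{n}$ and one of $n^{-1/2}$, and the $R_{i,j}$ factors contribute only powers of $L_{n}$ — which is where the additional $L_{n}^{\bar{\mathbf{q}}_{0}}$, giving $L_{n}^{\bar{\mathbf{q}}_{0}+3}$ in total, comes from.

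Finally I would carry out the summations in the order $\sum_{\ell,\ell',\ell''\in B_{j}}$ (giving $|B_{j}|^{3}$), then $\sum_{j=1}^{k}$ (applying Lemma~\ref{dependent_group_sum}, i.e.\ $\sum_{j}|B_{j}|^{m}\le 4pd^{m-1}$, with $m=3$ for the terms built only from $R_{i,j}$, with $m=\bar{\mathbf{q}}_{0}+3$ for the fully within-block term, and with intermediate $m$ for the mixed terms), and then $\sum_{i=1}^{n}L_{\loss_{i}}^{\mathbf{q}}=n\,Av(\{L_{\loss_{i}}^{\mathbf{q}}\})$. Combined with the prefactor $n^{-1}$ from $\phi$ and the $K_{g}\max\{1,\beta^{2}\}L_{n}^{\bar{\mathbf{q}}_{0}+3}\mathcal{D}_{\loss}^{3}(\rho)$ already extracted, the variance piece of the $R_{i,j}$-terms assembles into the $pd^{2}/n^{3/2}$ summand of $\sigma_{n}$, the heavy and mixed pieces into the $pd^{3+\bar{\mathbf{q}}_{0}/2}/n^{5/2}$ summand, and the purely within-block piece into the $pd^{\bar{\mathbf{q}}_{0}+2}/n^{(\bar{\mathbf{q}}_{0}+3)/2}$ summand, so that the total is $K_{g}\max\{1,\beta^{2}\}L_{n}^{\bar{\mathbf{q}}_{0}+3}\mathcal{D}_{\loss}^{3}(\rho)\,Av(\{L_{\loss_{i}}^{\mathbf{q}}\})\,\sigma_{n}$ with $\sigma_{n}$ as in \eqref{def:sigma}, up to a constant depending only on $\tau,q,M$. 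The hard part throughout is exactly this bookkeeping: organizing the binomial expansion of $|Z_{i}^{\top}w^{\star}|^{\bar{\mathbf{q}}_{0}}$ so that the correlation between the triple product and $V_{i,j}$ never costs more than the powers of $d$ and $n$ already present in $\sigma_{n}$, and checking that each of the three summands of $\sigma_{n}$ indeed dominates the family of terms it must absorb — which is precisely what dictates the form of $\sigma_{n}$ and the moment order $q^{*}$ in Assumption~\ref{ass::moment}.
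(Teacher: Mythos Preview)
Your outline is essentially the paper's own strategy: apply Lemma~\ref{Lipschitz_higherorder}, bound the third derivative of $g\circ n^{-1}F_{\beta}\circ\bar H_{\loss_\rho}$ pointwise, separate the $i$-th inner product at the interpolation point into a within-block piece and an outside-block piece, and collapse block sums with Lemma~\ref{dependent_group_sum}. Two technical differences are worth flagging. First, the paper does not do a full binomial expansion of $(R_{i,j}+V_{i,j})^{\bar{\mathbf q}_0}$; since $\bar{\mathbf q}_0$ is even it simply uses $|R+V|^{\bar{\mathbf q}_0}\lesssim |R|^{\bar{\mathbf q}_0}+|V|^{\bar{\mathbf q}_0}$, which removes all mixed terms at once. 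For the outside-block piece it then does \emph{not} invoke a Rosenthal-type inequality but expands $E[R_{i,j}^{\bar{\mathbf q}_0}]=\sum_{k_1,\ldots,k_{\bar{\mathbf q}_0}\notin B_j}E[C^{j}_{ik_1}\cdots C^{j}_{ik_{\bar{\mathbf q}_0}}]$ directly and counts, block by block, how many of these monomials survive the mean-zero cancellations (this is the paper's Lemma~\ref{general_inequality_dependent}); that stratification by the number of distinct blocks is exactly what produces the $pd^{3+\bar{\mathbf q}_0/2}/n^{5/2}$ summand of $\sigma_n$, whereas the two-term Rosenthal bound gives a slightly different (though comparable) expression. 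Second, and more important, your sentence ``every term carrying a factor of $R_{i,j}$ factorizes over that independence'' is not yet justified as stated: the object you are bounding is not $|Z_i^\top w^\star|^{\bar{\mathbf q}_0}$ for a single $w^\star$ but the Gibbs average $\langle \bar H^{(i)}(w,\cdot)^{\mathbf q}\rangle_{\bar H_\beta}$, whose weight at the interpolation point depends on row $i$ and hence on $X_i^{(B_j)}$. The paper (as in the $d=1$ case you cite) inserts one additional step---Chebyshev's association inequality, conditioning on $X_i^{(B_j)}$ and replacing $\bar H_\beta$ by $\bar H_{\beta;-i}$---before the independence of $R_{i,j}$ from the triple product can be used; without that decoupling the factorization you claim does not hold.
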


By setting the arbitrarily set $\mathcal{W}$ here as $\mS_{n,\delta}$, we develop an upper bound for the term $IV$. 
Note that the definition of $\sigma_n$ is given in the statement of Theorem \ref{main_universality}. 

\subsection{Bound for other terms}

We examine the other remaining items. Although the results here are the same as in \cite{han2023universality}, we explain their outline for the sake of completeness.

\subsubsection{Bound for $I$: smooth approximation}

We bound the term $I$ on the smooth approximation for a loss function $\loss_{i}$. 
From Assumption \ref{ass::loss}, $\|\loss_{i}-\loss_{i;\rho}\|_{\infty}$ is upper bounded by $\mathcal{M}_{\loss}(\rho)$. Hence, for any $\mathbf{Z}\in\{\bfX,\mathbf{W}\}$, we naturally obtain
\begin{align*}
    &\left|E\left[g\left(\min_{w\in \mathcal{S}_{n}}  H_{\loss}(w,\mathbf{Z})\right)\right]-E\left[g\left(\min_{w\in \mathcal{S}_{n}} H_{\loss_{\rho}}(w,\mathbf{Z})\right)\right]\right| \\
    &\leq \|g^{\prime}\|_{\infty}E\left[\left|\min_{w\in \mathcal{S}_{n}}  H_{\loss}(w,\mathbf{Z})-\min_{w\in \mathcal{S}_{n}} H_{\loss_{\rho}}(w,\mathbf{Z})\right|\right] \\
    &\leq K_{g}\mathcal{M}_{\loss}(\rho).
\end{align*}

\subsubsection{Bound for $II$: discretization of parameter space}

We study the term $II$, which explains an effect of approximating the minimization problem $\min_{w\in \mS_{n}} H_{\loss{\rho}}(w,\mathbf{X})$ on the parameter space $\mS_n$ to the minimization problem  $\min_{w\in \mS_{n,\delta}} H_{\loss{\rho}}(w,\mathbf{X})$ on a discretized parameter space $\mS_{n,\delta}$.
To this aim, we obtain the following statement.
\begin{proposition}\label{step2}
    Suppose Assumption \ref{ass::loss} holds. Fix $L_{n}\geq 1$. There exists some $C_{0}=C_{0}(q)>0$, such that for any measurable $\mathcal{S}_{n}\subset[-L_{n},L_{n}]^{p}$ and any $\delta\in(0,1)$, 
 there exists a deterministic finite set $\mS_{n,\delta}$ with $|\mS_{n,\delta}|\leq\lceil2L_{n}/\delta\rceil^{p}$ and the following
hold:
for any $\rho\in(0,\bar{\rho})$, $\mathbf{Z}\in\{\bfX,\mathbf{W}\}$ and $g\in C^{1}(\mathbb{R})$, we have
\begin{align*}
&\left|E\left[g\left(\min_{w\in \mS_{n,\delta}} H_{\loss_{\rho}}(w,\mathbf{Z})\right)\right]-E\left[g\left(\min_{w\in \mathcal{S}_{n}} H_{\loss_{\rho}}(w,\mathbf{Z})\right)\right]\right| \\
&\leq C_{0} \|g'\|_{\infty}\left(\mathcal{D}_{\loss}(\rho)\delta\cdot L_{n}^{q_{0}q_{1}}Av({L_{\loss_{i}}^{q_{1}}})\cdot \mM_X+\mathcal{M}_{\regularizer}(L_{n},\delta)\right),
\end{align*}
where $\mM_X := E[\max_{1\leq i\leq n}\sum_{j=1}^{p}|X_{i,j}|]+E[(\max_{1\leq i\leq n}\sum_{j=1}^{p}|X_{i,j}|)^{q_{0}q_{1}+1}]$.
\end{proposition}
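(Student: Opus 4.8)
The plan is to compare the two minimization problems pointwise on the common event that the covariate rows are not too large, then handle the tail separately. Concretely, I would fix a realization of $\mathbf{Z}\in\{\bfX,\mathbf{W}\}$ and show that for the right choice of finite grid $\mS_{n,\delta}$, the quantity $\min_{w\in\mS_{n,\delta}}H_{\loss_\rho}(w,\mathbf{Z})-\min_{w\in\mathcal{S}_n}H_{\loss_\rho}(w,\mathbf{Z})$ is nonnegative (trivially, since $\mS_{n,\delta}\subset\mathcal{S}_n$) and bounded above by the modulus of continuity of $w\mapsto H_{\loss_\rho}(w,\mathbf{Z})$ over $\mathcal{S}_n$ at scale $\delta$ in the $\ell^\infty$-metric. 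Since $\|g'\|_\infty$ controls $|g(a)-g(b)|\le\|g'\|_\infty|a-b|$, it suffices to bound $E$ of this modulus of continuity.

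The grid is the standard one: take $\mS_{n,\delta}$ to be a $\delta$-net of $[-L_n,L_n]^p$ in $\ell^\infty$ obtained by coordinatewise discretization, which has cardinality at most $\lceil 2L_n/\delta\rceil^p$, and then intersect/project onto $\mathcal{S}_n$ appropriately so that every point of $\mathcal{S}_n$ is within $\ell^\infty$-distance $\delta$ of a point of $\mS_{n,\delta}$; this is where the deterministic nature of the construction is used (it depends only on $\mathcal{S}_n$, $L_n$, $\delta$, not on $\mathbf{Z}$). Next, for $w,w'\in[-L_n,L_n]^p$ with $\|w-w'\|_\infty\le\delta$, I would decompose $H_{\loss_\rho}(w,\mathbf{Z})-H_{\loss_\rho}(w',\mathbf{Z})$ into the loss part $\frac1n\sum_i[\loss_{i;\rho}(Z_i^\top w)-\loss_{i;\rho}(Z_i^\top w')]$ and the regularizer part $\regularizer(w)-\regularizer(w')$. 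The regularizer part is bounded by $\mathcal{M}_{\regularizer}(L_n,\delta)$ by its definition. For the loss part, apply the mean value theorem to each $\loss_{i;\rho}$ to get a factor $|\loss_{i;\rho}'(\cdot)|\cdot|Z_i^\top(w-w')|$; bound $|Z_i^\top(w-w')|\le\delta\sum_{j=1}^p|Z_{i,j}|$, and use the self-bounding property in Assumption \ref{ass::loss}-(ii) together with the polynomial growth in (i) to bound $|\loss_{i;\rho}'(Z_i^\top w)|\lesssim\mathcal{D}_{\loss}(\rho)(1+|\loss_{i;\rho}(Z_i^\top w)|^{q_1})\lesssim\mathcal{D}_{\loss}(\rho)L_{\loss_i}^{q_1}(1+|Z_i^\top w|^{q_0 q_1})$, then bound $|Z_i^\top w|\le L_n\sum_j|Z_{i,j}|$, producing the $L_n^{q_0 q_1}$ factor and the $Av(\{L_{\loss_i}^{q_1}\})$ average. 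Collecting terms and taking expectations yields the $E[\max_i\sum_j|X_{i,j}|]+E[(\max_i\sum_j|X_{i,j}|)^{q_0 q_1+1}]$ combination (the maximum over $i$ appears because one bounds the sum over $i$ crudely by $n$ times a max, absorbing the $1/n$).

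The main obstacle I anticipate is the bookkeeping of exponents: making sure the power $q_0 q_1+1$ emerges correctly (one power of $\sum_j|Z_{i,j}|$ from the gradient-times-displacement bound, $q_0 q_1$ powers from the polynomial growth of the loss evaluated at a point of magnitude $\lesssim L_n\sum_j|Z_{i,j}|$), and that all $L_n$-powers are collected into a single $L_n^{q_0 q_1}$ prefactor with the remaining randomness packaged exactly as the stated expectation. A secondary subtlety is confirming that $\mS_{n,\delta}$ can be chosen deterministically (independent of $\mathbf{Z}$) while simultaneously being a subset on which the discretized minimum is genuinely close to the continuous minimum for both $\bfX$ and $\mathbf{W}$; this is fine because the net property is purely geometric. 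Since this proposition is explicitly attributed to \cite{han2023universality}, I expect the argument to follow theirs closely, and the writeup would mostly consist of reproducing their discretization lemma with the constants tracked.
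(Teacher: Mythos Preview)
Your proposal is correct and follows essentially the same approach as the paper's proof: construct a $\delta$-net of $\mathcal{S}_n$ by coordinatewise discretization, use the Lipschitz bound $|g(a)-g(b)|\le\|g'\|_\infty|a-b|$, split the difference of $H_{\loss_\rho}$ into loss and regularizer parts, control the regularizer by $\mathcal{M}_{\regularizer}(L_n,\delta)$, and control the loss part via the self-bounding derivative bound from Assumption~\ref{ass::loss} combined with $|Z_i^\top(w-w')|\le\delta\sum_j|Z_{i,j}|$ and $|Z_i^\top w|\le L_n\sum_j|Z_{i,j}|$. One small wording issue: your parenthetical about ``bounding the sum over $i$ crudely by $n$ times a max'' would lose the $Av(\{L_{\loss_i}^{q_1}\})$ average and replace it by a maximum; what the paper (and implicitly your earlier sentence) actually does is pull only the $Z$-dependent factors out as $\max_i\sum_j|Z_{i,j}|$ while keeping $\frac{1}{n}\sum_i L_{\loss_i}^{q_1}$ as the average, but this is a cosmetic point.
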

This result controls the error that arises when optimization is performed over a discretized finite set \( \mS_{n,\delta} \) instead of the original parameter set \( \mS_n \). 
Although this result is similar to Equation (4.11) in \cite{han2023universality}, there are several differences.
By taking \(\delta\) sufficiently small, the upper bound of this error can be made arbitrarily small.
The smoothness of a loss function and a regularizer allows us to achieve this result. 
By assigning each value in \(\mathcal{S}_{n; \delta}\) to a loss function, we can represent them as a high-dimensional vector. Let \((H(w, \mathbf{X}))_{w \in \mathcal{S}_{n; \delta}}\) denote this vector with covariate \(\mathbf{X}\). This step plays an important role as reducing the approximation problem to approximating two high-dimensional vectors: \((H(w, \mathbf{X}))_{w \in \mathcal{S}_{n; \delta}}\) and \((H(w, \mathbf{W}))_{w \in \mathcal{S}_{n; \delta}}\).

\subsubsection{Bound for $III$: smooth minimum approximation}
We study the term $III$ to evaluate the approximation error by the smooth minimum function $F_\beta$. 
A straightforward calculation yields an approximation to the minimum function as
\begin{equation*}
    \min_{1\leq j\leq N}x_{j}-\frac{\log N}{\beta}\leq F_{\beta}(x)\leq \min_{1\leq j\leq N}x_{j}, ~ x \in \R^N.
\end{equation*}
By utilizing this property, we obtain the following proposition.
\begin{proposition}\label{step3}
 For any finite set $\mW \subset \R^p$ and $g\in C^{1}(\mathbb{R})$, we have
\begin{equation*}
    \left|E\left[g\left(\min_{w\in \mW}H_{\loss_{\rho}}(w,\mathbf{X})\right)\right]-E\left[g\left[n^{-1}F_{\beta}\left((\bar{H}_{\loss_{\rho}}(w,\mathbf{X}))_{w\in \mW}\right)\right]\right]\right|\leq\frac{\|g^{'}\|_{\infty}\log|W|}{n\beta}.
\end{equation*}
\end{proposition}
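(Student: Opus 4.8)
The plan is to reduce the statement to the deterministic two–sided bound for the soft-minimum function recorded just above the proposition, namely $\min_{1\le j\le N}x_{j}-\beta^{-1}\log N\le F_{\beta}(x)\le \min_{1\le j\le N}x_{j}$, combined with the elementary Lipschitz estimate $|g(a)-g(b)|\le\|g'\|_{\infty}|a-b|$ valid for $g\in C^{1}(\R)$ by the fundamental theorem of calculus. Everything is pointwise (almost sure), so no integrability hypothesis on $g$ beyond $\|g'\|_{\infty}<\infty$ is needed, and $\mW$ finite guarantees $\log|\mW|<\infty$.

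First I would instantiate the soft-min sandwich at $N=|\mW|$ and $x=(\bar{H}_{\loss_{\rho}}(w,\mathbf{X}))_{w\in\mW}$. Since $\bar{H}_{\loss_{\rho}}=n\,H_{\loss_{\rho}}$ and the minimum commutes with multiplication by the positive scalar $n^{-1}$, dividing the chain of inequalities by $n$ and using $n^{-1}\min_{w\in\mW}\bar{H}_{\loss_{\rho}}(w,\mathbf{X})=\min_{w\in\mW}H_{\loss_{\rho}}(w,\mathbf{X})$ gives the deterministic bound
\[
  \left|\min_{w\in\mW}H_{\loss_{\rho}}(w,\mathbf{X})-n^{-1}F_{\beta}\!\left((\bar{H}_{\loss_{\rho}}(w,\mathbf{X}))_{w\in\mW}\right)\right|\le\frac{\log|\mW|}{n\beta}.
\]
(Equivalently, one may first use the rescaling identity $n^{-1}F_{\beta}(n\,y)=F_{n\beta}(y)$ with $y=(H_{\loss_{\rho}}(w,\mathbf{X}))_{w\in\mW}$ and then apply the soft-min bound with parameter $n\beta$; either route produces the same estimate.) Then, setting $a=\min_{w\in\mW}H_{\loss_{\rho}}(w,\mathbf{X})$ and $b=n^{-1}F_{\beta}((\bar{H}_{\loss_{\rho}}(w,\mathbf{X}))_{w\in\mW})$, the Lipschitz estimate gives $|g(a)-g(b)|\le\|g'\|_{\infty}|a-b|$ pointwise; taking expectations and using $|E[g(a)]-E[g(b)]|\le E[|g(a)-g(b)|]$ together with monotonicity of the expectation and the displayed bound yields
\[
  \left|E\!\left[g\!\left(\min_{w\in\mW}H_{\loss_{\rho}}(w,\mathbf{X})\right)\right]-E\!\left[g\!\left(n^{-1}F_{\beta}\!\left((\bar{H}_{\loss_{\rho}}(w,\mathbf{X}))_{w\in\mW}\right)\right)\right]\right|\le\frac{\|g'\|_{\infty}\log|\mW|}{n\beta},
\]
which is the claim.

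There is essentially no serious obstacle here: the argument is a one-line deterministic estimate followed by Jensen/monotonicity. The only point requiring care is the bookkeeping of the $n$-rescaling between $H_{\loss_{\rho}}$ and $\bar{H}_{\loss_{\rho}}=n\,H_{\loss_{\rho}}$, which is precisely what converts the generic soft-min error $\beta^{-1}\log N$ into $\log|\mW|/(n\beta)$.
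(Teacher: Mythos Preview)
Your proposal is correct and takes essentially the same approach as the paper: the paper's proof simply cites Proposition~4.1 of \cite{han2023universality}, which is precisely the deterministic soft-minimum sandwich bound you invoke, followed by the Lipschitz estimate for $g$ and an expectation. Your write-up is in fact more self-contained than the paper's one-line citation, and the bookkeeping of the $n$-rescaling between $H_{\loss_{\rho}}$ and $\bar H_{\loss_{\rho}}$ is handled correctly.
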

Evaluation by the soft minimum function is a useful tool used in high-dimensional analysis, as demonstrated by \citet{chernozhukov2013gaussian}. By exploiting the smoothness of the soft minimum function, we can apply various approximation methods, such as Stein's method and the generalized Lindeberg principle.

\begin{remark}[Connection to high-dimensional Gaussian approximation studies] \label{remark:connection_gaussian_approx}
In this proof, we employ several techniques used in the high-dimensional Gaussian approximation studies \citep{chernozhukov2013gaussian,chernozhukov2014gaussian,chernozhuokov2022improved,lopes2022central,chang2024central}. However, not all techniques from the approximation can be applied directly. There are two main reasons for this.
First, there is a difference in the objects of differentiation when considering neighborhood approximations.
In the approximation studies, one needs to 
differentiate the smoothly approximated maximum of a high-dimensional data vector with respect to the high-dimensional data itself. 
In contrast, in our setting, the loss function minimized with respect to the parameters is differentiated by the values of the high-dimensional data. Since the values being maximized and differentiated differ in this way, we cannot apply the approximation studies in our setup.
Second, the use of Nazarov's inequality plays a crucial role.
While some approximation studies, such as \cite{chang2024central}, need Nazarov's inequality, Nazarov's inequality also does not hold for the design matrix $\mathbf{X}$ or $\mathbf{W}$, since we consider the explicit form of the minimum of the empirical risk function.
Due to these differences, the results from the studies on high-dimensional Gaussian approximations cannot be directly applied.
\end{remark}

\section{Application} \label{sec:application}

In this section, we demonstrate the universality of the distribution of estimators with block dependence. First, we examine the asymptotic behavior of M-estimators under Gaussian
covariates with a general covariance matrix. We prove a convergence result for M-estimators by generalizing the main theorem of \citet{thrampoulidis2018precise}. Then, we apply this extension to estimators with block dependence under specific loss functions.

\subsection{Precise error of regularized M-estimators with general covariance}

 Consider the following regularization problem 
\begin{equation}\label{eq::theta_x_app}
    \hat{\theta}\in\argmin_{\theta\in\R^p}\left\{\mathcal{L}(\mathbf{y}-\mathbf{G}\theta)+\lambda f(\theta)\right\} \quad \text{where}\quad Y_{i}=G_{i}^{\top}\theta_0+\xi_i 
\end{equation}
where we define $\mathcal{L}(\mathbf{y}-\mathbf{G}\theta)=\sum_{i=1}^{n}\loss_{0}(Y_{i}-G_{i}^{\top}\theta)$ where $G_{i}\sim \mathcal{N}(0,\Sigma)$ and  $\xi_i$ is an independent noise variable.
We consider the asymptotic behavior of M-estimators derived from equation \eqref{eq::theta_x_app}. To establish the result, we assume the following:
\begin{assumption}\label{ass::eigen}
There exists $a > 0$ such that for any $n$, we have $a\leq \lambda_{min}(\Sigma)$.

\end{assumption}
\begin{assumption}\label{ass::TAH_1}{(Summary functionals $L(\cdot)$ and $F(\cdot)$)}
The following conditions hold:
\begin{enumerate}
    \item[(a)] \label{ass::TAH_1_L} For any $c\in\mathbb{R}$ and $\tau>0$, we have continuous functions $L:\mathbb{R}\times\mathbb{R}_{+}\rightarrow \mathbb{R}$ such that
    \begin{equation*}
        \frac{1}{n}\{e_{\mathcal{L}}(c\mathbf{g}+\xi;\tau)-\mathcal{L}(\mathbf{\xi})\}\xrightarrow[]{p}L(c,\tau). 
    \end{equation*}
    \item[(b)] \label{ass::TAH_1_F}
    For any $c\in\mathbb{R}$ and $\tau>0$, we have continuous functions $F:\mathbb{R}\times\mathbb{R}_{+}\rightarrow \mathbb{R}$ such that we have
    \begin{equation*}
\frac{1}{p}\{e_{g}(c\mathbf{h}+\Sigma^{1/2}\theta_0;\tau)-g(\Sigma^{1/2}\theta_{0})\}\xrightarrow[]{p}F(c,\tau),
    \end{equation*}
    where we define $g(\mathbf{x})$ as $f(\Sigma^{-1/2}\mathbf{x})$ and $\mathbf{g}$ and $\mathbf{h}$ follow normal distributions, $\mathcal{N}(0,I_{n})$ and $\mathcal{N}(0,I_{p})$, respectively. Note that $g(\Sigma^{1/2}\theta_0)$ is equal to $f(\theta_0)$.
     \item[(c)] \label{ass::TAH_1_complement} At least one of the following holds. There exists constant $C>0$ such that $\|\xi\|_{2}/\sqrt{p}\leq C$ with probability approaching 1 (w.p.a.1), or, $\sup_{\mathbf{v}\in\mathbb{R}^{p}}\sup_{\mathbf{s}\in\partial\mathcal{L}(\mathbf{v})}\|\mathbf{s}\|_{2}<\infty$ for any $p\in\mathbb{N}$.
\end{enumerate}
\end{assumption}
\begin{assumption}\label{ass::TAH_2}{(Properties of  $L(\cdot)$ and $F(\cdot)$, Assumption 2 of \citet{thrampoulidis2018precise})}      We say that Assumption \ref{ass::TAH_2} holds if all the following are true.
\begin{enumerate}
    \item[(a)] $\lim_{\tau\rightarrow0^{+}}F(\tau,\tau)=0$ and $\lim_{c\rightarrow+\infty}\left\{\frac{c^{2}}{2\tau}-F(c,\tau)\right\}=+\infty$ for all $\tau>0$. 
     \item[(b)] $\lim_{\tau\rightarrow0^{+}}L(\alpha,\tau)<+\infty,$ $\lim_{\tau\rightarrow 0^{+}}L(0,\tau)=0$ and $-\infty<L_{2,+}(0,0):=\lim_{\tau\rightarrow 0^{+}}L_{2,+}(0,\tau)\leq0$. 
      \item[(c)] $\frac{1}{n}\mathcal{L}(\xi)\xrightarrow[]{p}L_0\in[0,\infty]$. Also, $L_{0}=-\lim_{\tau\rightarrow+\infty}L(c,\tau)\geq-L(c,\tau^{\prime})$ for all $c\in\mathbb{R},\tau^{\prime}>0$. 
       \item[(d)] If $L_{0}=+\infty$, then $\lim_{\tau\rightarrow+\infty}L(c,\tau)/\tau=0$ for all $c\in\mathbb{R}$. 
\end{enumerate}
\end{assumption}

These assumptions are extensions of those of \citet{thrampoulidis2018precise} under a general covariance $\Sigma$.
Specifically, Assumptions \ref{ass::TAH_1} and \ref{ass::TAH_2} are identical to Assumption 1 and Assumption 2 in \citet{thrampoulidis2018precise} respectively,  except for the part on $F$. Because of the existence of $\Sigma$, we need to rotate the true parameter $\theta_0$ and a regularized function $f$. The rotation alters the structure of $F$, resulting in a different asymptotic behavior of the error $\frac{1}{p}\|\hat{\theta}-\theta_{0}\|^{2}_{\Sigma}$ relative to that in \citet{thrampoulidis2018precise}.

Under Assumptions \ref{ass::moment} to \ref{ass::TAH_2}, we obtain the generalization of 
Theorem 3.1 in \citet{thrampoulidis2018precise}.
\begin{theorem}\label{Theorem_3.1}
    Let Assumptions \ref{ass::moment} to \ref{ass::TAH_2} hold and define $\tau_{0}=\lim_{n\rightarrow\infty}(n/p)$. If the following convex-concave minimax scalar optimization 
    \begin{equation}\label{eq::TAH_main}
        \inf_{\substack{\alpha\geq0 \\ \tau_g>0}}\sup_{\substack{\beta\geq0\\ \tau_h>0}} \mathcal{\mathcal{D}}(\alpha,\tau_g,\beta,\tau_h) \\
        := \frac{\beta \tau_g}{2}+\tau_0 L\left(\alpha,\frac{\tau_g}{\beta}\right)- \frac{\alpha\tau_h}{2}-\frac{\beta^{2}\alpha}{2\tau_h}+\lambda F\left(\frac{\alpha\beta}{\tau_h},\frac{\alpha\lambda}{\tau_h}\right)
    \end{equation}
    has a unique minimizer $\alpha_*$, then we have
    \begin{align}
        \frac{1}{p}\|\hat{\theta}-\theta_{0}\|^{2}_{\Sigma}\xrightarrow[]{p}\alpha^{2}_{*}. \label{eq:limit_tah}
    \end{align}
\end{theorem}
This theorem is a generalization of Theorem 3.1 in \cite{thrampoulidis2018precise}, extending the covariance matrix from the identity matrix to a general matrix $\Sigma$. Especially,  the result of Theorem \ref{Theorem_3.1} coincides with that of Theorem 3.1 in \citet{thrampoulidis2018precise} when $\Sigma=I$ holds.
Specifically, a difference is that the norm $\|\cdot\|_\Sigma$ used in \eqref{eq:limit_tah} is weighted by $\Sigma$, and the function $F(\cdot)$ in \eqref{eq::TAH_main} corresponds to the general $\Sigma$, as introduced in Assumption \ref{ass::TAH_1}.

\subsection{Application to estimators}

In this section, we demonstrate the universality of the distribution of estimators with block dependence under specific loss functions. 

First, we consider estimators under the absolute loss and the Huber loss, as presented in Example \ref{ex:losses}.
We define a robust cost function as
\begin{equation*}
   H^{R}(w,\bfX,\xi):=\frac{1}{n}\sum_{i=1}^{n}\loss_{0}((X_{i}^{\top}w)-\xi_{i})+\frac{\lambda}{2}\|w+\theta_{0}\|^{2}.
\end{equation*}
where $\lambda > 0$ is a coefficient for the regularizer. Let $\hat{w}^{R}_{\bfX}$ denote the minimizer of the cost function with covariates $\mathbf{X}$.
Recall that $\theta_0 \in \R^p$ is the true parameter of the regression model \eqref{eq:linear_model}, and the estimator of the regression model as the minimizer \eqref{def:theta_x} with the setup is given as $ \hat{\theta}_{\mathbf{X}}^R = \hat{w}_{\mathbf{X}}^R + \theta_{0}$.
By substituting a Gaussian design matrix $\bfG$ instead of $\bfX$, we can consider the Gaussian analogue $H^{R}(w,\mathbf{G},\xi)$ as in Section \ref{sec:gaussian_analogy}.

To show the universality, we introduce the following assumptions:

\begin{assumption}[Assumptions for estimators]\label{ass::robust}
The following hold:
\begin{enumerate}
    \item[(a)] $\ell_{0}:\mathbb{R}\rightarrow\mathbb{R}$ is convex with a weak derivative $\ell^{\prime}_{0}$ satisfying $\max\{\ell_{0}(0),\esssup|\ell^{\prime}_{0}|\}\leq L_{0}$ for some $L_{0}>0$. \label{ass::robust::loss}

    \item[(b)] There exists $0<\varepsilon\leq (1/4)$ such that $d=o(n^{{1} / {4}-\varepsilon})$ \label{ass::robust::epsilon}

    \item[(c)] Uniformly in $p$, a rescaled covariate $X^{0}_{i} = \sqrt{n} X_{i}$ satisfies $M_{k^{*}:\mathbf{X}}:=\max_{j\in[p]}E[|X^{0}_{i, j}|^{2\lceil k^{*}/2\rceil}]$ $<\infty$ where $k^{*}>{3} / ({2\varepsilon})$. Also, $\lambda_{\max}(\Sigma)<\infty$ holds. \label{ass::robust::moment}
 \item[(d)] A distribution of $\xi$ is absolutely continuous with respect to the Lebesgue measure.

 \item[(e)] $\theta_{0}$ contains i.i.d. components whose law $\Pi_{0}$ guarantees the existence of moments of any order.
    \end{enumerate}
\end{assumption}
In Assumption \ref{ass::robust}, the condition (a) generalizes the condition previously mentioned for the robustness of the loss. Both the absolute loss and the Huber loss satisfy this condition. By combining the conditions (b) and (c), 
we can explain how the moment condition of the scaled covariates $X_i^0$ is affected by the order of $d$ for the block dependence.

To apply the result of Theorem \ref{main_universality_structure}, we establish the upper bound for the moment of robust estimators. 
\begin{lemma}\label{robus_linf}
    Suppose Assumptions \ref{ass::regime} and \ref{ass::robust} hold.
    Also, suppose that the distribution of $X_1,...,X_n$ is $d$-block dependent. 
     Then, for any $k\geq 2$, we have $K:=K(k)>0$ satisfying 
    \begin{equation*}
        \max_{j\in[p]}E\left[\left|\hat{w}^{R}_{\mathbf{X},j}\right|\right]\leq K\cdot \left\{(L_{0}/\lambda)^{k}(M_{k:\mathbf{X}})^{k/(2\lceil k/2\rceil )}+\|\theta_{0}\|^{k}_{\infty} \right\}.
    \end{equation*}
\end{lemma}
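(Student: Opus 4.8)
The plan is to bound $\max_{j\in[p]}E[|\hat w^R_{\mathbf X,j}|]$ by a deterministic quantity using a first-order optimality argument for the minimizer together with the bounded-derivative hypothesis on $\loss_0$. First I would write the stationarity condition for $\hat w^R_{\mathbf X}$: since $H^R(\cdot,\mathbf X,\xi)$ is convex and differentiable in $w$ (or admits a weak derivative via $\loss_0'$), at the minimizer we have
\begin{equation*}
    \frac1n\sum_{i=1}^n \loss_0'\bigl((X_i^\top \hat w^R_{\mathbf X})_i - \xi_i\bigr)\, X_i + \lambda(\hat w^R_{\mathbf X}+\theta_0) = 0,
\end{equation*}
so that $\hat w^R_{\mathbf X} + \theta_0 = -\frac{1}{\lambda n}\sum_{i=1}^n \loss_0'(\cdots)\,X_i$. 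Using $\esssup|\loss_0'|\le L_0$ from Assumption \ref{ass::robust}-\textit{i}, I get the coordinatewise bound $|\hat w^R_{\mathbf X,j}| \le \|\theta_0\|_\infty + \frac{L_0}{\lambda n}\sum_{i=1}^n |X_{i,j}|$, i.e. in terms of the rescaled covariates $X^0_{i,j}=\sqrt n X_{i,j}$, we have $|\hat w^R_{\mathbf X,j}| \le \|\theta_0\|_\infty + \frac{L_0}{\lambda n^{3/2}}\sum_{i=1}^n |X^0_{i,j}|$.

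Next I would take expectations and apply Jensen / the triangle inequality in $L^k$ to pass from the first moment to the $k$-th moment controlled by the hypothesis. Since the $X_i$ are i.i.d., $E\bigl[\bigl(\frac1n\sum_{i=1}^n|X^0_{i,j}|\bigr)^k\bigr] \le \max_i E[|X^0_{i,j}|^k] \le (M_{k:\mathbf X})^{k/(2\lceil k/2\rceil)}$ after one more application of Jensen to raise the exponent from $k$ to the even integer $2\lceil k/2\rceil$ against which the moment is assumed finite; the factor $K=K(p)$ absorbs the combinatorial constant from expanding the $k$-th power of a sum of $n$ terms (this is where the $n$-dependence cancels against the $n^{-3/2}$, or is absorbed into $K$). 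Combining with $|\hat w^R_{\mathbf X,j}|\le \|\theta_0\|_\infty + \frac{L_0}{\lambda n^{3/2}}\sum_i|X^0_{i,j}|$ and using $(a+b)\le 2^{k}(a^k+b^k)^{1/k}$-type inequalities gives, after taking $E[\,\cdot\,]$ and then maximizing over $j$,
\begin{equation*}
    \max_{j\in[p]}E\bigl[|\hat w^R_{\mathbf X,j}|\bigr] \le K\cdot\Bigl\{ (L_0/\lambda)^k (M_{k:\mathbf X})^{k/(2\lceil k/2\rceil)} + \|\theta_0\|_\infty^k \Bigr\},
\end{equation*}
which is the claimed bound. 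The $d$-block dependence is not actually needed for this lemma beyond ensuring the setup is consistent — the argument only uses that the $X_i$ are i.i.d. across $i$ and the moment bound across coordinates; I would double-check whether the statement intends the bound to be genuinely improved by independence within blocks, but the cleanest route uses only the marginal moment control.

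The main obstacle I anticipate is purely bookkeeping: making the constant $K=K(p)$ precise and confirming it need not blow up with $n$ — i.e. controlling $E\bigl[\bigl(\sum_{i=1}^n |X^0_{i,j}|\bigr)^k\bigr]$ with the right power of $n$ so that the $n^{-3/2}$ (or $n^{-1}$ if one does not rescale) prefactor leaves a bounded quantity. This requires either a multinomial expansion with careful counting of which terms survive under independence across $i$, or a Rosenthal/Marcinkiewicz–Zygmund-type inequality for sums of independent nonnegative variables; the latter is cleaner and gives $E[(\sum_i|X^0_{i,j}|)^k]\lesssim_k n^k \max_i E[|X^0_{i,j}|]^k + n^{k/2}(\cdots) + \cdots$, and one checks the dominant term pairs with $n^{-3/2k}$ appropriately. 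I would also need to be slightly careful that $\loss_0$ may only have a weak derivative, so the stationarity identity should be stated via the subdifferential $0\in\partial_w H^R$ and a measurable selection $\loss_0'$ with $|\loss_0'|\le L_0$ a.e., which is exactly what Assumption \ref{ass::robust}-\textit{i} provides.
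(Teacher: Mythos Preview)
Your core idea is sound and simpler than the paper's, but you have been tripped up by what is almost certainly a typo in the statement: the left-hand side should be $\max_{j\in[p]} E\bigl[|\hat w^R_{\mathbf X,j}|^{k}\bigr]$, not the first moment. This is clear both from the paper's own proof (which computes $E[|\hat w_{\mathbf X,s}|^k]$ throughout) and from the application in Theorem~\ref{universality_robust}, where the lemma is plugged into a Markov inequality at power $k^*$. Your struggle to reconcile a first-moment left side with $k$-th-power terms on the right---absorbing mismatches into $K(p)$, reaching for Rosenthal---is an artifact of this typo.

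Once you target the $k$-th moment, your direct first-order-condition argument goes through without any of the machinery you worry about. From $|\hat w^R_{\mathbf X,j}|\le \|\theta_0\|_\infty + (L_0/\lambda)\cdot\tfrac1n\sum_{i}|X_{i,j}|$ you get
\[
E\bigl[|\hat w^R_{\mathbf X,j}|^k\bigr]\;\lesssim_k\;\|\theta_0\|_\infty^k + (L_0/\lambda)^k\,E\Bigl[\Bigl(\tfrac1n\sum_i|X_{i,j}|\Bigr)^{k}\Bigr],
\]
and by Jensen (convexity of $t\mapsto t^k$) the last expectation is at most $E[|X_{1,j}|^k]=n^{-k/2}E[|X^0_{1,j}|^k]\le n^{-k/2}(M_{k:\mathbf X})^{k/(2\lceil k/2\rceil)}$, the final step being Lyapunov. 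No Rosenthal or Marcinkiewicz--Zygmund is needed, and the $n$-dependence resolves itself (you even gain a spare factor $n^{-k/2}$).

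For comparison, the paper takes a more elaborate route: it introduces a column leave-one-out estimator $\hat w^{(s)}$ to write $|\hat w_{\mathbf X,s}|\lesssim \lambda^{-1}\bigl|\sum_i X_{i,s}\,\loss_0'(X_{i,-s}^\top\hat w^{(s)}_{-s}-\xi_i)\bigr|+\|\theta_0\|_\infty$, then applies symmetrization (Rademacher $\varepsilon_i$) and Khintchine's inequality to pass to $E[(\sum_i X_{i,s}^2)^{k/2}]$ before a final Jensen step. The leave-one-out is meant to decouple the score weight from the $s$-th column so that the centered i.i.d.\ structure of $(X_{i,s})_i$ can be exploited; your approach forgoes this by crudely bounding $|\sum_i X_{i,s}\loss_0'(\cdots)|\le L_0\sum_i|X_{i,s}|$, yet arrives at the same (indeed slightly sharper) endpoint with only elementary tools. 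You are also correct that the $d$-block dependence within each $X_i$ plays no role here---only the i.i.d.\ structure across $i$ and the marginal moment control are used.
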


By combining the result with Theorem \ref{main_universality_structure}, we obtain the following universality result on the distribution of the estimator $\hat{w}^{R}_{\bfX}$:
\begin{theorem}\label{universality_robust}
    Suppose Assumptions \ref{ass::regime} and \ref{ass::robust} hold.
    Also, suppose that the distribution of $X_1,...,X_n$ is $d$-block dependent. Fix a measurable subset $\mathcal{S}_{n}\subset\mathbb{R}^{p}$. Suppose that there exist $z\in\mathbb{R}, \rho_{0}>0$,  and $\varepsilon_n\in[0,1/4)$ such that 
         $\hat{w}_{\mathbf{G}}$ does not satisfy the specific structure $\mathcal{S}_{n}$ in the sense that 
         \begin{equation*}
             \max\left\{\Pr\left(\min_{w\in \mathbb{R}^{p}} H^{R}(w,\mathbf{G},\xi)\geq z+\rho_{0}\right), \Pr\left(\min_{w\in \mathcal{S}_{n}}H^{R}(w,\mathbf{G},\xi)\leq z+2\rho_{0}\right) \right\}\leq \varepsilon_{n}.
         \end{equation*}
Then, $\hat{w}_{\mathbf{X}}$ also does not satisfy the specific structure $\mathcal{S}_{n}$ with high probability, i.e. there exists some  $K=K(\lambda,\tau,M_{k^{*}:\mathbf{X}},k^{*}, \varepsilon,L_{0})>0$ such that it holds that 
\begin{equation*}
    \Pr(\hat{w}^{R}_{\mathbf{X}}\in\mathcal{S}_{n})\leq 4\varepsilon_{n}+K(1+\|\theta_{0}\|^{k^{*}}_{\infty}+\rho^{-3}_{0})n^{-{1}/  {45}}.
\end{equation*}
\end{theorem}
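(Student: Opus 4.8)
The plan is to obtain Theorem~\ref{universality_robust} as a direct application of Theorem~\ref{main_universality_structure} to the risk minimization problem~\eqref{loss::redef} with $\loss_i(x):=\loss_0(x-\xi_i)$ and $\regularizer(w):=\tfrac{\lambda}{2}\|w+\theta_0\|_2^2$, so that $H^{R}(w,\bfX,\xi)=H_{\loss}(w,\bfX)$ and $\hat{w}^{R}_{\bfX}=\hat{w}_{\bfX}$. First I would carry out the bookkeeping of checking Assumptions~\ref{ass::loss} and~\ref{ass::moment} under Assumption~\ref{ass::robust}. Since $\loss_0$ is convex with $\loss_0(0)\le L_0$ and $\esssup|\loss_0'|\le L_0$, it is $L_0$-Lipschitz and $|\loss_i(x)|\le L_0(1+|\xi_i|)(1+|x|)$, so one may take $q_0=1$ and $L_{\loss_{i}}=L_0(1+|\xi_i|)$; mollifying $\loss_0$ at scale $\rho$ gives $C^{3}$ approximants with $\mathcal{M}_{\loss}(\rho)\lesssim L_0\rho$, $\mathcal{D}_{\loss}(\rho)\lesssim 1+L_0\rho^{-2}$, and $q_1=q_2=q_3=0$, whence $\mathbf{q}=0$, $\bar{\mathbf{q}}_{0}=0$, $q^{*}=4$, and $Av(\{L_{\loss_{i}}^{\mathbf{q}}\})=1$ (so no moment control of $\xi_i$ is needed). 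Assumption~\ref{ass::moment} then follows from Assumption~\ref{ass::robust}(iii) because $2\lceil k^{*}/2\rceil\ge 8\ge q^{*}$ and $\lambda_{\max}(\Sigma)<\infty$. Finally, $d$-block dependence of $X_i$ forces $\Sigma/n=\Ep[X_{i}X_{i}^{\top}]$ to be block diagonal for the partition $\{\set_{j}\}$, so $G_i\sim N(0_p,\Sigma/n)$ is itself $d$-block dependent with the same partition, and $\regularizer$ is convex; hence all hypotheses of Theorem~\ref{main_universality_structure} except its condition~(i) are in place, its condition~(ii) being exactly condition~(ii) of Theorem~\ref{universality_robust}.

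The next step is to establish condition~(i) of Theorem~\ref{main_universality_structure}. Applying Lemma~\ref{robus_linf} with $k=k^{*}$ to both $\bfX$ and $\bfG$ (the latter is legitimate since $G_i$ is block dependent and Gaussian moments are finite, with $M_{k^{*}:\bfG}\lesssim_{k^{*}}\lambda_{\max}(\Sigma)^{\lceil k^{*}/2\rceil}$), Markov's inequality together with a union bound over the $p$ coordinates gives, for every $L_n\ge 1$,
\begin{equation*}
\max\Big\{\Pr\big(\|\hat{w}^{R}_{\bfX}\|_{\infty}>L_n\big),\ \Pr\big(\|\hat{w}^{R}_{\bfG}\|_{\infty}>L_n\big)\Big\}\ \lesssim\ \frac{p\,\mathsf{B}}{L_n^{k^{*}}},\qquad \mathsf{B}:=(L_0/\lambda)^{k^{*}}\big(M_{k^{*}:\bfX}\vee\lambda_{\max}(\Sigma)\big)^{k^{*}/(2\lceil k^{*}/2\rceil)}+\|\theta_0\|_{\infty}^{k^{*}}.
\end{equation*}
Fixing $L_n=n^{a}$ for a small exponent $a>1/k^{*}$ to be pinned down last, this is $\lesssim n^{1-ak^{*}}(1+\|\theta_0\|_{\infty}^{k^{*}})$ with a constant depending only on $(\lambda,\tau,M_{k^{*}:\bfX},k^{*},L_0)$. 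Putting $\varepsilon_n':=\max\{\varepsilon_n,\Pr(\|\hat{w}^{R}_{\bfX}\|_{\infty}>L_n),\Pr(\|\hat{w}^{R}_{\bfG}\|_{\infty}>L_n)\}$, which lies in $[0,1/4)$ for $n$ large, Theorem~\ref{main_universality_structure} yields
\begin{equation*}
\Pr\big(\hat{w}^{R}_{\bfX}\in\mathcal{S}_{n}\big)\ \le\ 4\varepsilon_n\ +\ 4n^{1-ak^{*}}\big(1+\|\theta_0\|_{\infty}^{k^{*}}\big)\ +\ C_0\max\{1,\rho_0^{-3}\}\,r_{\regularizer}(L_n).
\end{equation*}

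It remains to bound $r_{\regularizer}(L_n)$, which is the delicate part. With $\bar{\mathbf{q}}_{0}=\mathbf{q}=0$ one has $\omega_n=n^{-3/2}\sigma_n$, $L_n^{(\bar{\mathbf{q}}_{0}+3)/3}=L_n$, and $\mathcal{M}_{\regularizer}(L_n,\delta)\le \lambda p\,\delta\,(L_n+\|\theta_0\|_{\infty})$. Taking $\delta$ of order $\omega_n$ makes $\mathcal{M}_{\regularizer}(L_n,\delta)$ lower order than $L_n\sigma_n^{1/3}$ while keeping $\log_{+}(L_n/\delta)\lesssim\log n$ (using $\sigma_n\gtrsim_{\tau} n^{-1/2}$ from~\eqref{def:sigma}), so the inner infimum is $\lesssim L_n(\log n)^{2/3}\sigma_n^{1/3}$; optimizing $\rho\mapsto L_0\rho+L_0^{3}\rho^{-6}(\cdots)$ then yields $r_{\regularizer}(L_n)\lesssim_{L_0,\lambda,\tau}(L_n+\|\theta_0\|_{\infty})^{1/7}(\log n)^{2/21}\sigma_n^{1/21}$. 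Inserting $\sigma_n=o(n^{-2\varepsilon})$ (from $d=o(n^{1/4-\varepsilon})$ and $p\lesssim n$) and $L_n=n^{a}$, the error beyond $4\varepsilon_n$ is governed by $n^{1-ak^{*}}$ and $n^{a/7-2\varepsilon/21}$, up to logarithmic and $\|\theta_0\|_{\infty}$, $\rho_0$ factors; choosing $a$ so as to balance these two exponents and using $k^{*}>3/(2\varepsilon)$ to ensure the resulting common exponent is negative, one arrives at the claimed rate, which after collecting the dependence on $\|\theta_0\|_{\infty}$, $\rho_0$, $\lambda$, $L_0$ and $M_{k^{*}:\bfX}$ into $K$ reads $K(1+\|\theta_0\|_{\infty}^{k^{*}}+\rho_0^{-3})n^{-1/45}$. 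I expect the main obstacle to be exactly this optimization: $r_{\regularizer}$ grows polynomially in the truncation level $L_n$ while the moment tail bound pushes $L_n$ up, so $L_n$ has to be chosen in a narrow window and every exponent in $\sigma_n$ and in the nested infima of $r_{\regularizer}$ must be tracked; exploiting that $\regularizer$ is smooth (which could let one bypass the discretization step behind the $L_n^{1/7}$ factor) may be needed to make the numerology close.
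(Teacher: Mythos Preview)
Your proposal is correct and follows essentially the same route as the paper's proof: verify Assumptions~\ref{ass::loss} and~\ref{ass::moment} for the robust loss (yielding $q_0=1$, $\mathbf{q}=\bar{\mathbf{q}}_0=0$, $\mathcal{M}_{\loss}(\rho)\lesssim L_0\rho$, $\mathcal{D}_{\loss}(\rho)\lesssim L_0\rho^{-2}$), feed Lemma~\ref{robus_linf} plus Markov and a union bound into condition~(i) of Theorem~\ref{main_universality_structure}, bound $r_{\regularizer}(L_n)\lesssim_{L_0}\big(L_n\log^{2/3}(n)\,\sigma_n^{1/3}\big)^{1/7}$, and then pick $L_n$ to balance the tail term $n L_n^{-k^{*}}$ against this. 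The only cosmetic difference is that the paper parametrizes the truncation as $L_n=(n/d^{4})^{1/6-\delta}$ with a specific $\delta=\delta(\varepsilon,k^{*})$, whereas you write $L_n=n^{a}$ and substitute $d=o(n^{1/4-\varepsilon})$ into $\sigma_n$ first; both lead to the same balancing equation and the same constraint $k^{*}>3/(2\varepsilon)$ for the exponent to be negative.
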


With Theorem \ref{main_universality_structure}  for the universality, Theorem \ref{Theorem_3.1} and Theorem \ref{universality_robust} derive the convergence result of estimation error with non-Gaussian covariates as follows:
\begin{theorem}[Robust estimators]\label{universality_robust_CGMT}
   Suppose that Assumptions \ref{ass::regime},  \ref{ass::eigen}, \ref{ass::TAH_1}-(b) \ref{ass::TAH_2}-(a), and \ref{ass::robust} hold. Define $\tau_{0}=\lim_{n\rightarrow\infty}(n/p)$. 
   Moreover,  we assume either (a) $\ell_{0}$ does not have a derivative at a certain point, or (b) $\ell_{0}$
 includes an interval where $\psi_{0}$ has a strictly increasing derivative.
   Then, if convex-concave minimax scalar optimization \eqref{eq::TAH_main} has
     a unique minimizer $\alpha^{*}$, we have 
   \begin{equation*}
p^{-1} {\|\hat{\theta}^{R}_{\mathbf{X}}-\theta_{0}\|_{\Sigma}^2} \overset{p}{\rightarrow} \alpha_{*}^{2}.
   \end{equation*}
\end{theorem}
This result demonstrates that the error of the estimator $\widehat{\theta}^{R}_{\mathbf{X}}$ from non-Gaussian covariates converges to the known value $\alpha^{2}_{*}$, which is given as the solution of the convex-concave minimax scalar optimization defined in \eqref{eq::TAH_main}.
Our result shows that this universality of the estimation error also holds when the independence assumption of covariates $X_i$ is relaxed.

Second, we apply our main result to the Lasso and Ridge regressions under the squared loss, as examined in \cite{han2023universality}. 
Let $\widehat{w}^{Ri}$ and $\widehat{w}^{L}$ denote Ridge and Lasso estimators, respectively. Moreover, define $f^{Ri}$ and $f^{L}$ denote $\|\cdot\|_2$ and $\|\cdot\|_1$, respectively.  
\begin{theorem}[Ridge/ Lasso estimator]\label{universality_Ridge_Lasso_CGMT}
   Suppose that Assumptions \ref{ass::regime},  \ref{ass::moment}, \ref{ass::eigen}, \ref{ass::TAH_1}-(b), \ref{ass::TAH_2}-(a), and \ref{ass::robust}-(d)(e) hold  and  define $\tau_{0}=\lim_{n\rightarrow\infty}(n/p)$. 
   Moreover, we assume the second moment of $\xi$ is finite, $d$ increases so that $L_n^{7}(\log n)^{2}\sigma_{n}=o(1)$ holds as $n \to \infty$, and 
 there exist $L_{n}\geq 1$ and $\varepsilon_n\in[0,1/4)$ such that both $\|\hat{w}^{A}_{\mathbf{G}}\|_{\infty}$ and $\|\hat{w}^{A}_{\mathbf{X}}\|_{\infty}$ increase moderately in the sense that for each $A\in\{Ri, L\}$, we have
         \begin{equation*}
             \max\left\{\Pr(\|\hat{w}^{A}_{\mathbf{G}}\|_{\infty}>L_{n}), \Pr(\|\hat{w}^{A}_{\mathbf{X}}\|_{\infty}>L_{n})\right\}\leq \varepsilon_{n}.
         \end{equation*}
   Let $\alpha^{Ri}_{*}$ and $\alpha^{L}_{*}$ denote the unique minimizers of convex-concave minimax scalar optimization \eqref{eq::TAH_main} with $F^{Ri}$ and $F^{L}$, respectively. Then, for each $A\in\{Ri, L\}$, we have 
   \begin{equation*}
p^{-1} {\|\hat{\theta}^{A}_{\mathbf{X}}-\theta_{0}\|_{\Sigma}^2} \overset{p}{\rightarrow} (\alpha_{*}^A)^{2}.
   \end{equation*}
\end{theorem}
A key point here is whether the upper bound for the estimator holds, as in Lemma \ref{robus_linf} in the setup in this section. 
In \cite{han2023universality}, the upper bound is derived under the assumption that each element of $X_i$ is independent, so the bound does not apply directly to our case. 
Instead, we can utilize the upper bound under dependent covariates, as in Lemma 7.3 of \cite{lahiry2023universality}, which allows us to show the universality of the Lasso and Ridge regressions.

\section{Numerical experiments}

We numerically investigate the universality of the distribution of the estimation error under dependent data. 
In particular, the cell size $d$ has the same order as the parameter dimension $p$.

We generate synthetic data from the linear regression model \eqref{eq:linear_model}.
To introduce dependence between the features, we partition the \(p\) elements of inputs into cells of size \( d \). Each block is generated independently, but features within each cell have a correlation \(\rho = 0.5\). Formally, we partition the $p$ elements of the full vector $X_i \in \R^p$ into blocks of size $d$: within the $b$-th block (\(b=1,\dots,p/d\)), we have $X_i^{(b)} \in \R^d$.
Then, $X_i^{(b)}$ is generated from a multivariate distribution with a covariance matrix \(\Sigma_{\text{block}}\), defined by
\(
\Sigma_{\text{block}} = \rho \mathbf{1}\mathbf{1}^{\top} + (1-\rho)I_d.
\)
We consider two types of distributions for generating each block of features:
(i) A Gaussian distribution (multivariate normal) with mean zero and covariance matrix \(\Sigma_{\text{block}}\), and (ii) A multivariate \(t\)-distribution with degrees of freedom \(\nu = 5\), standardized to have covariance matrix \(\Sigma_{\text{block}}\).
We set $(n,p) \in \{(100,50),(200,100)\},$ and $ d \in \{5,10,25,50\}$ and investigate cases where the cell size $d$ is as large as the dimension $p$.

We evaluate and compare the following four estimators for $\theta_0$: (i) {the ordinary least squares estimator (OLS)}, which minimizes the squared loss function without regularization, (ii) the {Ridge estimator}, which minimizes the squared loss function with an $R_0(w) = \|\cdot\|_2^2$, (iii) the {Lasso estimator} which minimizes the squared loss function with an $R_0(w) = 0.1\|\cdot\|_1$, and (iv) the {Huber estimator}, with its parameter \(\eta = 1.35\).
For each scenario, we generate synthetic datasets repeatedly (\(100\) trials) from both Gaussian and \(t\)-distributed designs and compute the norms of the estimation errors $\|\hat{\theta} - \theta_0\|_\Sigma^2$ for each estimator.

Figure~\ref{fig:qqplots_n100} and \ref{fig:qqplots_n200} show 
quantile-quantile plots, which plot quantiles of estimation error norms obtained under Gaussian-generated data (horizontal axis) against those obtained under \(t\)-distributed data (vertical axis). In all tested scenarios, points closely align with the 45-degree line, visually supporting the universality even if $d$ has the same order as $p$.

\begin{figure}[h!]
    \centering
    \includegraphics[width=0.83\linewidth]{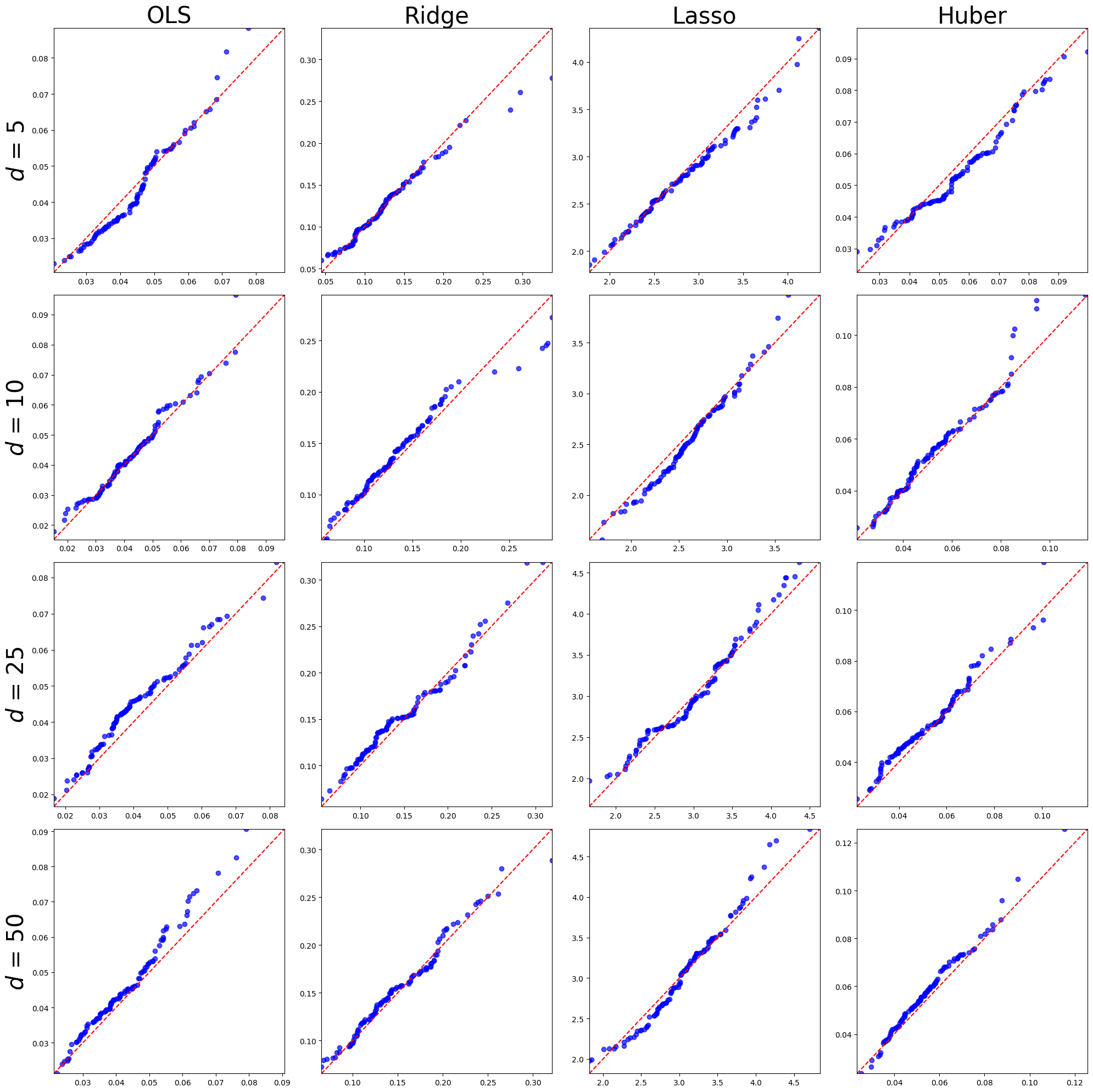}
    \caption{Q-Q plots of the estimation errors. 
    The horizontal axis is for the Gaussian design, and the vertical axis is for the $t$‑distributed design.
    We set $n=100,p=50$.}
    \label{fig:qqplots_n100}
\end{figure}

\begin{figure}[h!]
    \centering
    \includegraphics[width=0.83\linewidth]{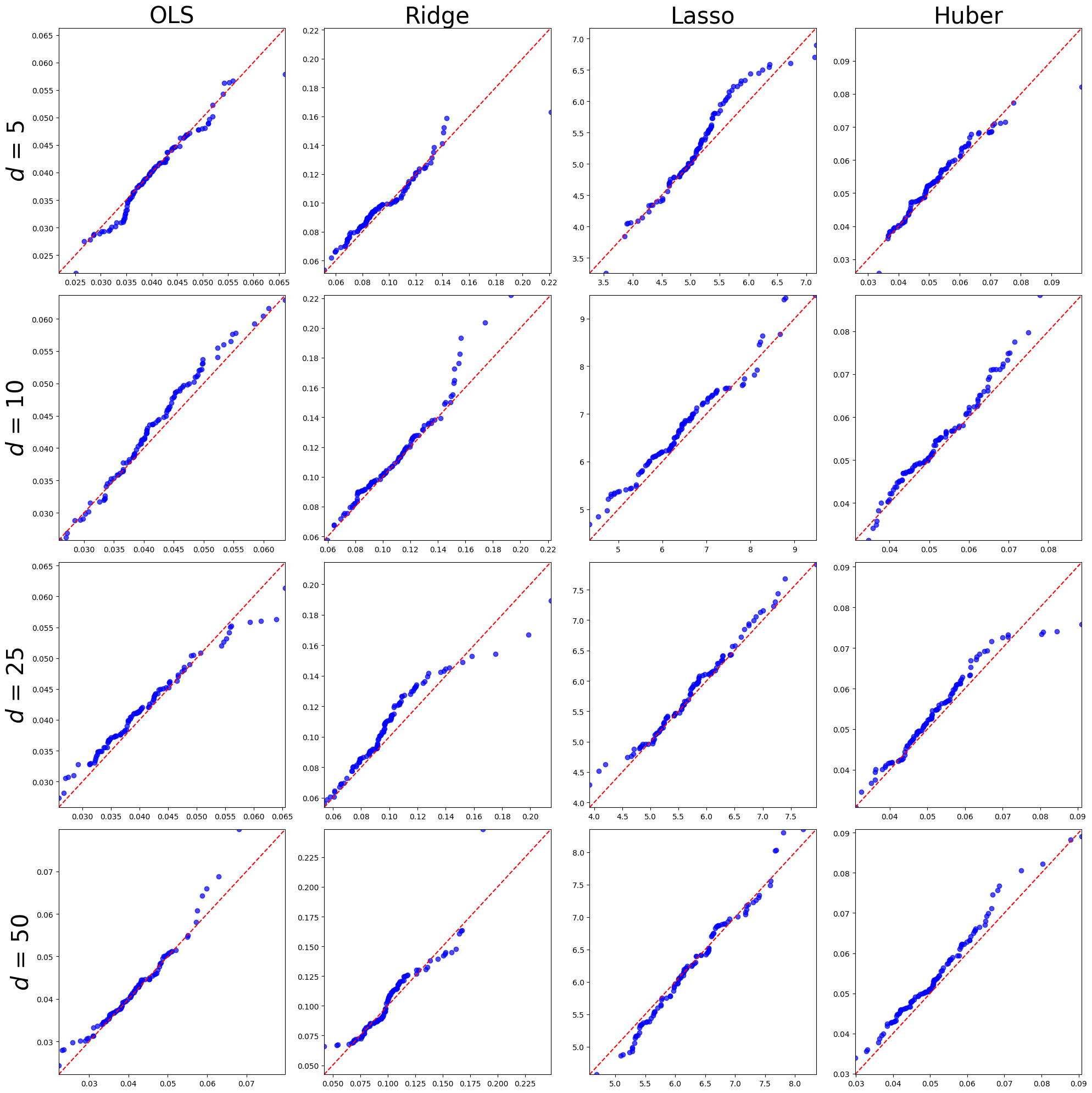}
    \caption{Q-Q plots of the estimation errors. The horizontal axis is for the Gaussian design, and the vertical axis is for the $t$‑distributed design. We set $n=200,p=100$.} 
    \label{fig:qqplots_n200}
\end{figure}

\section{Conclusion}

We derive the universality of the estimator for linear models under the assumption that the distribution of covariates exhibits the block dependence. While the properties of estimators in many high-dimensional models are typically identified under the assumption of Gaussian covariates, extending these results to settings with non-Gaussian covariates remains an important challenge. While the existing studies have required independence among each element of the covariates, this study relaxes that assumption and demonstrates that universality holds even under the block dependence of covariates of a certain order. This allows us to precisely identify the robust estimator for non-Gaussian covariates with dependence among elements.

\section*{Acknowledgments}

The authors would like to thank the anonymous referees, an Associate Editor and the Editor for their constructive comments that improved the quality of this paper.

\section*{Funding}

Toshiki Tsuda was supported by Grant-in-Aid for JSPS Fellows (23KJ0713). 
Masaaki Imaizumi was supported by JSPS KAKENHI (24K02904), JST CREST (JPMJCR21D2),  JST FOREST (JPMJFR216I), and JST BOOST (JPMJBY24A9).

\appendix

\section{Proof of main results}

\subsection{Proof of Theorem \ref{main_universality}}

\begin{proof}

From Proposition \ref{step4}, \ref{step2}, and \ref{step3}, for any fixed $\delta\in (0,1)$,  we obtain
\begin{flalign*}
& \left|E\left[g\left(\min_{w\in \mathcal{S}_{n}} H_{\loss}(w,\mathbf{X})\right)\right]-E\left[g\left(\min_{w\in \mathcal{S}_{n}} H_{\loss}(w,\mathbf{W})\right)\right]\right|&
\end{flalign*}
\begin{align*}
    &\lesssim_{\tau,q,M} K_{g}&&\Biggl[\mathcal{M}_{\loss}(\rho)+\mathcal{D}^{3}_{\loss}(\rho)  \\
    & &&\times \left\{L_{n}^{q_{0}q_{1}}Av({L_{\loss_{i}}^{q_{1}}})\cdot\max_{Z_{ij}\in\{X_{ij},W_{ij}\}}\left(E\left[\max_{1\leq i\leq n}\sum_{j=1}^{p}|Z_{i,j}|\right]+E\left[\left(\max_{1\leq i\leq n}\sum_{j=1}^{p}|Z_{i,j}|\right)^{q_{0}q_{1}+1}\right]\right)\delta\right. \\
    & && \quad \left.+\mathcal{M}_{\regularizer}(L_{n},\delta)+\beta^{-1}\log(L_{n}/\delta)+\max\{1,\beta^{2}\} L_{n}^{\bar{\mathbf{q}}_{0}+3}Av(\{L^{\mathbf{q}}_{\loss_{i}}\})\sigma_{n}\right\} \biggr].
\end{align*}
For each $\mathbf{Z}\in\{\mathbf{X}, \mathbf{W}\}$, we have
\begin{align*}
    E\left[\left(\max_{1\leq i\leq n}\sum_{j=1}^{p}|Z_{i,j}|\right)^{q_{0}q_{1}+1}\right]=&\left(\left\|\max_{1\leq i\leq n}\sum_{j=1}^{p}|Z_{i,j}|\right\|_{q_{0}q_{1}+1}\right)^{q_{0}q_{1}+1} \\
    \leq&n\left(\max_{1\leq i\leq n}\left\|\sum_{j=1}^{p}|Z_{i,j}|\right\|_{q_{0}q_{1}+1}\right)^{q_{0}q_{1}+1} \\
    \leq &n\left(\left\|\sum_{j=1}^{p}|Z_{i,j}|\right\|_{q_{0}q_{1}+1}\right)^{q_{0}q_{1}+1} \\
    \leq&n\left(p\max_{1\leq j\leq p}\left\|Z_{i,j}\right\|_{q_{0}q_{1}+1}\right)^{q_{0}q_{1}+1}, 
\end{align*}
where the first inequality follows from Lemma 2.2.2 in \citet{vaart2023empirical}. For the term $\max_{1\leq j\leq p}\|Z_{i,j}\|_{q_{0}q_{1}+1}$, from Assumption \ref{ass::moment}, we have
\begin{equation*}
    \max_{1\leq j\leq p}\left\|Z_{i,j}\right\|_{q_{0}q_{1}+1}\lesssim_{q,M} \frac{1}{\sqrt{n}} 
\end{equation*}
Thus, we have
\begin{equation*}
    E\left[\left(\max_{1\leq i\leq n}\sum_{j=1}^{p}|Z_{i,j}|\right)^{q_{0}q_{1}+1}\right]\lesssim_{\tau,q,M}n^{\frac{q_{1}q_{0}+3}{2}}.
\end{equation*}
Hence, by $q_0 q_1 >0$, we also obtain
\begin{align*}
    &E\left[\max_{1\leq i\leq n}\sum_{j=1}^{p}|Z_{i,j}|\right]+E\left[\left(\max_{1\leq i\leq n}\sum_{j=1}^{p}|Z_{i,j}|\right)^{q_{0}q_{1}+1}\right] 
    \lesssim_{\tau,q,M} n^{(q_{0}q_{1}+3)/2}.
\end{align*}
Therefore, by setting $0<\delta\leq n^{-(q_{0}q_{1}+3)/2}\sigma_{n}$, the upper bound becomes
\begin{equation*}
    \mathcal{M}_{\loss}(\rho)+\mathcal{D}^{3}_{\loss}(\rho)\left\{\mathcal{M}_{\regularizer}(L_{n},\delta)+\beta^{-1}\log(L_{n}/\delta)+\max\{1,\beta^{2}\} L_{n}^{\bar{\mathbf{q}}_{0}+3}Av(\{L^{\mathbf{q}}_{\loss_{i}}\})\sigma_{n}\right\}.
\end{equation*}
By minimizing the above term in terms of $\beta$, we obtain the required result.

For the second claim, take any function $g\in C^{3}(\mathbb{R})$ such that 
\begin{equation*}
    g(x):=\begin{cases}
    0 & \text{if $x\leq 1$,} \\
    2 & \text{if $x>3$.}
    \end{cases}
\end{equation*} 
With $\varphi(\mathbf{X}) :=\min_{w\in \mS_{n}} H_{\loss}(w, \mathbf{X})$, for any $z > 0, \varepsilon>0$, we have
\begin{align*}
    \Pr(\varphi(\mathbf{X})-z>3\varepsilon)\leq &\Pr[g((\varphi(\mathbf{X})-z)/\varepsilon)\geq2] \\
    \leq &\frac{1}{2}E[g((\varphi(\mathbf{X})-z)/\varepsilon)] \\
    \leq &\frac{1}{2}E[g((\varphi(\mathbf{W})-z)/\varepsilon)]+C\cdot\max\{1,\varepsilon^{-3}\}r_{R}(L_{n}) \\
    \leq &\Pr(\varphi(\mathbf{W})-z)>\varepsilon)+C\cdot\max\{1,\varepsilon^{-3}\}r_{R}(L_{n}).
\end{align*}

\end{proof}

\subsection{Proof of Theorem \ref{main_universality_structure}}

\begin{proof}
    From the condition (1), for the event $A:=\{\hat{w}_{\mathbf{X}}\in[-L_{n},L_{n}]^{p}\}$, we have
    $\Pr (A^{c})\leq \varepsilon_{n}$. Because the event
    \begin{equation*}
        \left\{\min_{\|w\|_{\infty}\leq L_{n}}H_{\loss}(w,\mathbf{X})<z+3\rho_{0}\right\}\bigcap\left\{\min_{w\in\mathcal{S}_{n}\cap [-L_{n}, L_{n}]^{p}}H_{\loss}(w,\mathbf{X})>z+6\rho_{0}\right\}
    \end{equation*}
    is included in the event $\{\hat{w}_{\mathbf{X}}\notin \mathcal{S}_{n}\cap [-L_{n}, L_{n}]^{p}\}$, we have
    \begin{align*}
        \Pr(\hat{w}_{\mathbf{X}}\in\mathcal{S}_{n})&\leq \Pr(\hat{w}_{\mathbf{X}}\in\mathcal{S}_{n}\cap [-L_{n}, L_{n}]^{p})+\Pr(A^{c}) \\
        &\leq \Pr\left(\min_{\|w\|_{\infty}\leq L_{n}}H_{\loss}(w,\mathbf{X})\geq z+3\rho_{0}\right)+\Pr\left(\min_{w\in\mathcal{S}_{n}\cap [-L_{n}, L_{n}]^{p}}H_{\loss}(w,\mathbf{X})\leq z+6\rho_{0}\right)+\varepsilon_{n}.
    \end{align*}
For the first term, by Theorem \ref{main_universality}, we obtain
\begin{align*}
    &\Pr\left(\min_{\|w\|_{\infty}\leq L_{n}}H_{\loss}(w,\mathbf{X})\geq z+3\rho_{0}\right) \\
    &\leq\Pr\left(\min_{\|w\|_{\infty}\leq L_{n}}H_{\loss}(w,\mathbf{W})\geq z+3\rho_{0}\right)+C_{1}(\max\{1,\rho_{0}^{-3}\})r_{R}(L_{n}) \\
    &\leq \Pr\left(\min_{w\in \mathbb{R}^{p}}H_{\loss}(w,\mathbf{W})\geq z+3\rho_{0}\right)+\varepsilon_{n}+C_{1}(\max\{1,\rho_{0}^{-3}\})r_{R}(L_{n}) \\
    &\leq 2\varepsilon_{n}+C_{1}(\max\{1,\rho_{0}^{-3}\})r_{R}(L_{n}),
\end{align*}
where the last two inequalities hold by assumption. For the second term, we have
\begin{align*}
    \Pr\left(\min_{w\in\mathcal{S}_{n}\cap [-L_{n}, L_{n}]^{p}}H_{\loss}(w,\mathbf{X})\leq z+6\rho_{0}\right)\leq &\Pr\left(\min_{w\in\mathcal{S}_{n}}H_{\loss}(w,\mathbf{X})\leq z+6\rho_{0}\right) \\
    \leq&\Pr\left(\min_{w\in\mathcal{S}_{n}}H_{\loss}(w,\mathbf{W})\leq z+18\rho_{0}\right)+C_{1}(\max\{1,\rho_{0}^{-3}\})r_{R}(L_{n}) \\
    \leq&\varepsilon_{n} +C_{1}(\max\{1,\rho_{0}^{-3}\})r_{R}(L_{n}),
\end{align*}
where the second last inequality follows from Theorem \ref{main_universality}. The last inequality holds from the assumption. Therefore, the required result follows.
\end{proof}

\subsection{Proof of Corollary \ref{cor:universality_risk} and Corollary \ref{cor:corollary_distribution}}

\begin{proof}
Whichever corollary we prove, we verify that $r_R(L_n)$ converges to zero with the setup.
For the minimization in terms of $\delta \in (0,\omega_n)$, we take $\delta = \delta_n \asymp n^{-c_1}$ as $n \to \infty$ with some $c_1 > 1$.
Since $L_n$ has an order $O(n \log^\kappa n)$, we have $\log^{2/3}_{+}(L_{n}/\delta_n) =O(\log^{2/3} n) = o(\log n)$.
In addition, we have $\mathcal{M}_{\regularizer}(L_{n},\delta_n) = o(1)$ with $\delta_n \searrow 0$.
Here, we apply the locally Lipschitz or pseudo-Lipschitz property of the specified regularizer to state this result.
We set $d=o(n^\mu/(\log n)^{d^{*}})$ with some $\mu, d^* \in \R$ and apply the definition of $\sigma_n$ in \eqref{def:sigma}, then obtain 
\begin{align*}
    &\log^{2/3}_{+}(L_{n}/\delta_n) L_{n}^{(\bar{\mathbf{q}}_{0}+3)/{3}}\sigma^{1/3}_{n}\\
    &=o(\log n)O(n^{ (\bar{\mathbf{q}}_{0}+3)\alpha/{3}}(\log n)^{(\bar{\mathbf{q}}_{0}+3)\kappa/{3}})o(n^{2\mu/3}/(\log n)^{2d^{*}/3})n^{-1/6} \\
    &=o(n^{(\bar{\mathbf{q}}_{0}+3)\alpha/{3} +  2\mu/3 -1/6 }(\log n)^{1-2d^{*}/3+(\bar{\mathbf{q}}_{0}/3+1)\kappa}).
\end{align*}
Therefore, we obtain $\log^{2/3}_{+}(L_{n}/\delta_n) L_{n}^{(\bar{\mathbf{q}}_{0}+3)/{3}}\sigma^{1/3}_{n} = o(1)$, when $\mu$ is no more than $1/4 - (\bar{\mathbf{q}}_{0} + 3) \alpha / 2$ and $d^{*}$ is greater than or equal to $3/2+1/2(\bar{\mathbf{q}}_{0}+3)\kappa$.
Using the result, we obtain
\begin{equation*}
    r_R(L_n) \lesssim  \inf_{\rho\in(0,\bar{\rho})}\left\{\mathcal{M}_{\loss}(\rho)+\mathcal{D}^{3}_{\loss}(\rho)\cdot \zeta_n\right\},
\end{equation*}
with some sequence $\zeta_n = o(1)$.

In the squared loss case, we obtain $\mathcal{M}_{\loss}(\rho) = 0$ and $\mathcal{D}_{\loss}(\rho) \leq C$ with some $C > 0$, hence we obtain the following as $n \to \infty$:
\begin{align}
    r_R(L_n) = \zeta_n =o(1).
\end{align}

In the absolute loss and the Huber loss cases, we have $\mathcal{M}_{\loss}(\rho) \lesssim \rho$ and $\mathcal{D}_{\loss}(\rho) \lesssim \rho^{-2}$, which implies
\begin{align}
    r_R(L_n) \lesssim  \inf_{\rho\in(0,\bar{\rho})}\left\{\rho+\rho^{-6} \cdot \zeta_n\right\}.
\end{align}
Then, at the minimization in $\rho$, we take $\rho = \rho_n =\zeta_n^{1/7}$, then we obtain
\begin{align}
    r_R(L_n) \lesssim \zeta_n^{1/7} = o(1).
\end{align}
Finally, we obtain the statement.
\end{proof}

\section{Proof of lemmas and propositions}

In this section, we provide several proofs.
We remind some notation. 
For a matrix $A \in \R^{d_1 \times d_2}$ and a function $h: \R^{d_1 \times d_2} \to \R$, $\partial_{j_1, j_2} h(A) := \partial h(A)  / \partial {A_{j_1,j_2}}$ denotes a partial derivative in terms of a $(j_1,j_2)$-element $A_{j_1,j_2}$ of $A$.

\subsection{Preliminaries and some auxiliary results}\label{appen:preliminaries}

For later use, we summarize the notations and the derivatives of $g(n^{-1}F_{\beta}(\cdot))$. Set $F_{\beta}(A)$ and $\bar{H}(w,A)$ as follows:
\begin{align*}
        F_{\beta}(A)&:=F_{\beta}((\bar{H}(w,A))_{w\in \mW}) =-\frac{1}{\beta}\log(\sum_{w\in \mW}\exp(-\beta \bar{H}(w,A))),
\end{align*}
and
\begin{align*}
    \bar{H}(w,A)&:=n\times H(w,A) =\sum_{i=1}^{n}\loss_{i}((Aw)_{i})+n\cdot \regularizer (w).
\end{align*}
We introduce the notation $\langle\cdot\rangle_{H_{0}}$ as follows:
\begin{equation*}
    \langle f\rangle_{H_{0}}:=\frac{\sum_{w\in \mW} f(w,A)e^{-H_{0}(w,A)}}{\sum_{w\in \mW}e^{-H_{0}(w,A)}}.
\end{equation*}
Then, for any $g\in C^{3}(\mathbb{R})$, we have
\begin{equation*}
    \partial_{i,j}g(n^{-1}F_{\beta}(A))=n^{-1}\langle\partial_{i,j} \bar{H}\rangle_{\bar{H}_{\beta}}\cdot g^{\prime}(n^{-1}F_{\beta}(A)),
\end{equation*}
and
\begin{align*}
    &\partial_{i,j}\partial_{i,k}g(n^{-1}F_{\beta}(A))\\
    &=n^{-2}\langle\partial_{i,j} \bar{H}\rangle_{\bar{H}_{\beta}}\langle\partial_{i,k} \bar{H}\rangle_{\bar{H}_{\beta}}\cdot g^{\prime\prime}(n^{-1}F_{\beta}(A)) \\
    &\quad +n^{-1}g^{\prime}(n^{-1}F_{\beta}(A))\left(\langle\partial_{i,j}\partial_{i,k} \bar{H}\rangle_{\bar{H}_{\beta}}-\beta\langle\partial_{i,j} \bar{H}\partial_{i,k} \bar{H}\rangle_{\bar{H}_{\beta}}+\beta\langle\partial_{i,j} \bar{H}\rangle_{\bar{H}_{\beta}}\langle\partial_{i,k} \bar{H}\rangle_{\bar{H}_{\beta}} \right).
\end{align*}
where we define  \(\bar{H}_{\beta} := \beta \cdot \bar{H}\) .
Also, we obtain the higher-order derivative as
\begin{align*}
    &\partial_{i,j} \partial_{i,k} \partial_{i,\ell}g(n^{-1}F_{\beta}(A))\\
    &=n^{-3} \langle\partial_{i,j} \bar{H}\rangle_{\bar{H}_{\beta}}\langle\partial_{i,k} \bar{H}\rangle_{\bar{H}_{\beta}}\langle\partial_{i,\ell} \bar{H}\rangle_{\bar{H}_{\beta}}\cdot g^{\prime\prime\prime}(n^{-1}F_{\beta}(A)) \\
    &~+n^{-2}\langle\partial_{i,k} \bar{H}\rangle_{\bar{H}_{\beta}}\cdot g^{\prime\prime}(n^{-1}F_{\beta}(A))\left(\langle\partial_{i,j}\partial_{i,\ell} \bar{H}\rangle_{\bar{H}_{\beta}}-\beta\langle\partial_{i,j} \bar{H}\partial_{i,\ell} \bar{H}\rangle_{\bar{H}_{\beta}}+\beta \langle\partial_{i,j} \bar{H}\rangle_{\bar{H}_{\beta}}\langle\partial_{i,\ell} \bar{H}\rangle_{\bar{H}_{\beta}}\right) \\
    &~+n^{-2}\langle\partial_{i,j} \bar{H}\rangle_{\bar{H}_{\beta}}\cdot g^{\prime\prime}(n^{-1}F_{\beta}(A))\left(\langle\partial_{i,k}\partial_{i,\ell} \bar{H}\rangle_{\bar{H}_{\beta}}-\beta\langle\partial_{i,k} \bar{H}\partial_{i,\ell} \bar{H}\rangle_{\bar{H}_{\beta}}+\beta \langle\partial_{i,k} \bar{H}\rangle_{\bar{H}_{\beta}}\langle\partial_{i,\ell} \bar{H}\rangle_{\bar{H}_{\beta}}\right) \\
    &~+n^{-2}\langle\partial_{i,\ell} \bar{H}\rangle_{\bar{H}_{\beta}}\cdot g^{\prime\prime}(n^{-1}F_{\beta}(A))\left(\langle\partial_{i,j}\partial_{i,k} \bar{H}\rangle_{\bar{H}_{\beta}}-\beta\langle\partial_{i,j} \bar{H}\partial_{i,k} \bar{H}\rangle_{\bar{H}_{\beta}}+\beta\langle\partial_{i,j} \bar{H}\rangle_{\bar{H}_{\beta}}\langle\partial_{i,k} \bar{H}\rangle_{\bar{H}_{\beta}} \right) \\
    &~+n^{-1}g^{\prime}(n^{-1}F_{\beta}(A)) \\
    &~\times\left\{\langle\partial_{i,j}\partial_{i,k}\partial_{i,\ell} \bar{H}\rangle_{\bar{H}_{\beta}}-\beta\langle\partial_{i,j}\partial_{i,k} \bar{H}\partial_{i,\ell}  \bar{H}\rangle_{\bar{H}_{\beta}}+\beta\langle\partial_{i,j}\partial_{i,k} \bar{H}\rangle_{\bar{H}_{\beta}}\langle\partial_{i,\ell} \bar{H}\rangle_{\bar{H}_{\beta}}\right.
     \\
    &~-\beta\left(\langle\partial_{i,j}\partial_{i,\ell} \bar{H}\partial_{i,k} \bar{H}\rangle_{\bar{H}_{\beta}}+\langle\partial_{i,j} \bar{H}\partial_{i,k}\partial_{i,\ell} \bar{H}\rangle_{\bar{H}_{\beta}}-\beta\langle\partial_{i,j} \bar{H}\partial_{i,k} \bar{H}\partial_{i,\ell} \bar{H}\rangle_{\bar{H}_{\beta}}\right. \\
    &~\left.+\beta\langle\partial_{i,j} \bar{H}\partial_{i,k} \bar{H}\rangle_{\bar{H}_{\beta}}\langle\partial_{i,\ell} \bar{H}\rangle_{\bar{H}_{\beta}}\right) \\
    &+\beta\langle\partial_{i,j} \bar{H}\rangle_{\bar{H}_{\beta}}\left(\langle\partial_{i,k}\partial_{i,\ell} \bar{H}\rangle_{\bar{H}_{\beta}}-\beta\langle\partial_{i,k} \bar{H}\partial_{i,\ell} \bar{H}\rangle_{\bar{H}_{\beta}}+\beta\langle\partial_{i,k} \bar{H}\rangle_{\bar{H}_{\beta}}\langle\partial_{i,\ell} \bar{H}\rangle_{\bar{H}_{\beta}}\right) \\
    &~+\left.\beta\langle\partial_{i,k} \bar{H}\rangle_{\bar{H}_{\beta}}\left(\langle\partial_{i,j}\partial_{i,\ell} \bar{H}\rangle_{\bar{H}_{\beta}}-\beta\langle\partial_{i,j} \bar{H}\partial_{i,\ell} \bar{H}\rangle_{\bar{H}_{\beta}}+\beta \langle\partial_{i,j} \bar{H}\rangle_{\bar{H}_{\beta}}\langle\partial_{i,\ell} \bar{H}\rangle_{\bar{H}_{\beta}}\right)\right\}.
\end{align*}

\subsection{Proof of Lemma \ref{Lipschitz_higherorder}}

\begin{proof}
$W_{i}^{(\set_{k})}$ is defined in the similar manner of $X_{i}^{(\set_{k})}$. We  define 
\begin{align}
    &\mathbf{Z}_{i,j}:=(X_{1},\cdots,X_{i-1},(W_{i}+\sum_{\ell=1}^{j}(X_{i}^{(\set_{\ell})}-W_{i}^{(\set_{\ell})})),W_{i+1},\cdots,W_{n})^{\top},
\end{align}
and
\begin{align}
    &\mathbf{Z}_{i,j}^{0}:=\left(X_{1},\cdots,X_{i-1},(W_{i}-X_{i}^{(\set_{j})}+\sum_{\ell=1}^{j}(X_{i}^{(\set_{\ell})}-W_{i}^{(\set_{\ell})})),W_{i+1},\cdots,W_{n}\right)^{\top}.
\end{align}
Especially, when $j=0$ holds, $\mathbf{Z}_{i,0}$ becomes $(X_{1},\cdots,X_{i-1},W_{i},W_{i+1},\cdots,W_{n})^{\top}$.
       
By definition, we have 
    \begin{align*}
        E[f(\mathbf{X})]-E[f(\mathbf{W})]=\sum_{i=1}^{n}&\left(\sum_{j=1}^{k}E[(f(\mathbf{Z}_{i,j})-f(\mathbf{Z}^{0}_{i,j}))-(f(\mathbf{Z}_{i,j-1})-f(\mathbf{Z}^{0}_{i,j
        }))]\right) .
    \end{align*}
    By third-order Taylor approximation, we have
    \begin{align*}
f(\mathbf{Z}_{i,j})&=f(\mathbf{Z}^{0}_{i,j})+(X^{(\set_{j})}_{i})^{\top}\nabla_{i} f(\mathbf{Z}_{i,j}^{0})+\frac{1}{2}(X^{(\set_{j})}_{i})^{\top}\nabla^{2}_{i} f(\mathbf{Z}_{i,j}^{0})X^{(\set_{j})}_{i} \\
&~+\frac{1}{2}\int^{1}_{0}(1-t)^2 \sum_{\ell \in B_j} \sum_{\ell' \in B_j} \sum_{\ell'' \in B_j} X_{i,\ell} X_{i,\ell'} X_{i,\ell''}\partial_{i, \ell}\partial_{ i, \ell'} \partial_{ i, \ell''}f(\mathbf{Z}_{i,j}^{0}+tX^{(\set_{j})}_{i})dt,
    \end{align*}
    and similarly,
     \begin{align*}
f(\mathbf{Z}_{i,j-1})&=f(\mathbf{Z}^{0}_{i,j})+(W^{(\set_{j})}_{i})^{\top}\nabla_{i} f(\mathbf{Z}_{i,j}^{0})+\frac{1}{2}(W^{(\set_{j})}_{i})^{\top}\nabla^{2}_{i} f(\mathbf{Z}_{i,j}^{0})W^{(\set_{j})}_{i} \\
&~+\frac{1}{2}\int^{1}_{0}(1-t)^2\sum_{\ell \in B_j} \sum_{\ell' \in B_j} \sum_{\ell'' \in B_j} W_{i,\ell} W_{i,\ell'} W_{i,\ell''} \partial_{i, \ell}\partial_{ i, \ell'} \partial_{ i, \ell''}f(\mathbf{Z}_{i,j}^{0}+tW^{(\set_{j})}_{i})dt,
\end{align*}
    where we define
    \begin{equation*}
        \nabla_{i} f(\mathbf{Z}^{0}_{i,j}):=\begin{bmatrix}
\partial_{i,1}f(\mathbf{Z}^{0}_{i,j})  \\
\vdots  \\
\partial_{i,p}f(\mathbf{Z}^{0}_{i,j})  \\
\end{bmatrix},
\quad
 \nabla^{2}_{i} f(\mathbf{Z}^{0}_{i,j}):=\begin{bmatrix}
\partial^{2}_{i,1}f(\mathbf{Z}^{0}_{i,j}) & \cdots & \partial_{i,1}\partial_{i,p}f(\mathbf{Z}^{0}_{i,j})\\
\vdots &  \ddots & \vdots \\
\partial_{i,1}\partial_{i,p}f(\mathbf{Z}^{0}_{i,j}) & \cdots & \partial^{2}_{i,p}f(\mathbf{Z}^{0}_{i,j}) \\
\end{bmatrix}.
    \end{equation*}
 Therefore, we have
   \begin{align*}
        &E[f(\mathbf{X})]-E[f(\mathbf{W})] \\
       &=\sum_{i=1}^{n}\left(\sum_{j=1}^{k}E[(f(\mathbf{Z}_{i,j})-f(\mathbf{Z}^{0}_{i,j}))-(f(\mathbf{Z}_{i,j-1})-f(\mathbf{Z}^{0}_{i,j
        }))]\right) \\
&=\sum_{i=1}^{n}\sum_{j=1}^{k}\sum_{\ell \in \set_{j}}E[(X_{i,\ell}-W_{i,\ell})\partial_{i,\ell} f(\mathbf{Z}^{0}_{i,j})]  \\
&~+\frac{1}{2}\sum_{i=1}^{n}\sum_{j=1}^{k}\sum_{\ell \in \set_{j}}\sum_{\ell^{\prime} \in \set_{j}}E[(X_{i,\ell}X_{i,\ell^{\prime}}-W_{i,\ell}W_{i, \ell^{\prime}})\partial_{i,\ell}\partial_{i,\ell^{\prime}} f(\mathbf{Z}^{0}_{i,j})]  \\
        &~+\frac{1}{2}\sum_{i=1}^{n}\sum_{j=1}^{k}\int^{1}_{0}(1-t)^2\sum_{\ell \in B_j} \sum_{\ell' \in B_j} \sum_{\ell'' \in B_j} X_{i,\ell} X_{i,\ell'} X_{i,\ell''}\partial_{i, \ell}\partial_{ i, \ell'} \partial_{ i, \ell''}f(\mathbf{Z}_{i,j}^{0}+tX^{(\set_{j})}_{i})dt \\
        &~-\frac{1}{2}\sum_{i=1}^{n}\sum_{j=1}^{k}\int^{1}_{0}(1-t)^2\sum_{\ell \in B_j} \sum_{\ell' \in B_j} \sum_{\ell'' \in B_j} W_{i,\ell} W_{i,\ell'} W_{i,\ell''} \partial_{i, \ell}\partial_{ i, \ell'} \partial_{ i, \ell''}f(\mathbf{Z}_{i,j}^{0}+tW^{(\set_{j})}_{i})dt.
    \end{align*}
    By the assumption on block dependence, for any $j$ and $\ell\in\set_{j}$, we have
    \begin{align*}
        E[(X_{i,\ell}-W_{i,\ell})\partial_{i, \ell} f(\mathbf{Z}^{0}_{i,j})] &=E[E[(X_{i,\ell}-W_{i,\ell})|\mathbf{Z}^{0}_{i,j}]\partial_{i,\ell} f(\mathbf{Z}^{0}_{i,j})] \\
        &=E[E[(X_{i,\ell}-W_{i,\ell})]\partial_{i,\ell} f(\mathbf{Z}^{0}_{i,j})]\\
        &=0.
    \end{align*}
    In the same way, for any $j$ and $\ell, \ell^{\prime}\in\set_{j}$, we have
    \begin{equation*}
      E[(X_{i,\ell}X_{i, \ell^{\prime}}-W_{i,\ell}W_{i,\ell^{\prime}})\partial_{i,\ell}\partial_{i,\ell^{\prime}} f(\mathbf{Z}^{0}_{i,j})] =0.
    \end{equation*}
    Therefore, we have the required result.
\end{proof}

\subsection{Proof of Proposition \ref{step4}}

To derive the result , we introduce two lemmas. First, we establish that the difference in the expectation of certain functions can be upper bounded by moments of their inputs.
\begin{lemma}\label{Lipschitz_higherorder_d-dependent}
   Consider two sequences of i.i.d. $\R^p$-valued random vectors $X_1,...,X_n$ and $W_1,...,W_n$. 
   Suppose that each $\{X_{i}\}_{i=1}^{n}$ and $\{W_{i}\}_{i=1}^{n}$ is $d$-block dependent, and $E[X_{i}]=E[W_{i}]$ and $E[X_{i}X_{i}^{\top}]=E[W_{i}W_{i}^{\top}]$ hold. 
   Let $\mathbf{X}$ and $\mathbf{W}$ denote matrices $(X^{\top}_{1},\cdots,X^{\top}_{n})^{\top}$ and $(W^{\top}_{1},\cdots,W^{\top}_{n})^{\top}$, respectively. 
   Then, there exists some $C_{0}=C_{0}(\tau,q)>0$ such that for any finite set $\mW\subset[-L_{n},L_{n}]^{p}$ with $L_{n}\geq 1$, any $\rho\in (0,1)$, and any $g\in C^{3}(\mathbb{R}^{n\times d})$ satisfying $\max_{i \in [n], m \in [p]}|E[\partial_{i,m} g(\mathbf{Z}^{0}_{i,j})]|<\infty$ and $ \max_{i \in [n], m,m^{\prime} \in [p]}|E[\partial_{i,m} \partial_{i,m^{\prime}} g(\mathbf{Z}^{0}_{i,j})]| < \infty$, we have
   \begin{align*}
          &|E[g(n^{-1}F_{\beta}((\bar{H}_{\loss_{\rho}}(w,\mathbf{X})))_{w\in\mathcal{W
}}]-E[g(n^{-1}F_{\beta}((\bar{H}_{\loss_{\rho}}(w,\mathbf{W})))_{w\in\mathcal{W
}}]| \\
       &\leq C_{0}(\tau,q)K_{g}\max\{1,\beta^{2}\}  L_{n}^{\bar{\mathbf{q}}_{0}+3}D^{3}_{\loss}(\rho) \\
   & \times \left(\frac{1}{n}\sum_{i=1}^{n}\sum_{j=1}^{k}\max_{Z_{i}^{(\set_{j})}\in (X_{i}^{(\set_{j})}, W_{i}^{(\set_{j})})}E\left[\left(\sum_{\ell \in \set_{j}}\left|Z_{i,\ell}\right|\right)^{3}\right]\right. \\
&+\frac{1}{n}\sum_{i=1}^{n}L^{\mathbf{q}}_{\loss_{i}}\sum_{j=1}^{k}\max_{Z_{i}^{(\set_{j})}\in (X_{i}^{(\set_{j})}, W_{i}^{(\set_{j})})}
E\left[\left(\sum_{\ell \in \set_{j}}\left|Z_{i,\ell}\right|\right)^{3}\right] \\
&+\frac{1}{n}\sum_{i=1}^{n}L^{\mathbf{q}}_{\loss_{i}}\sum_{j=1}^{k}\max_{Z_{i}^{(\set_{j})}\in (X_{i}^{(\set_{j})}, W_{i}^{(\set_{j})})}
E\left[\left(\sum_{\ell \in \set_{j}}\left|Z_{i,\ell}\right|\right)^{\bar{\mathbf{q}}_{0}+3}\right] \\
&\left.+\frac{1}{n}\sum_{i=1}^{n}L^{\mathbf{q}}_{\loss_{i}}\sum_{j=1}^{k}\max_{Z_{i}^{(\set_{j})}\in (X_{i}^{(\set_{j})}, W_{i}^{(\set_{j})})}E\left[\left(\sum_{\ell \in \set_{j}}\left|Z_{i,\ell}\right|\right)^{3}\right]\sum_{k_{1},\cdots,k_{\bar{\mathbf{q}}_{0}}\notin \set_{j}} E\left[C^{j}_{ik_{1}}\cdots C^{j}_{ik_{\bar{\mathbf{q}}_{0}}}\right]\right).
\end{align*}
where we set $\tau\leq \frac{n}{p}\leq \frac{1}{\tau}$ and  denote $C^{j}_{i\ell}$ as the $(i,\ell)$ element of $\mathbf{Z}_{i,j}^{0}$.  The summation of the last term expresses the sum of all the combinations of $\mathbf{{\bar{q}}}_{0}$ random variables exerted from the $i$-th row of $\mathbf{Z}_{i,j}^{0}$ except those in $\set_{j}$.
\end{lemma}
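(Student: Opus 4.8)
The plan is to reduce the bound to the third--order Taylor remainder produced by the generalized Lindeberg principle of Lemma~\ref{Lipschitz_higherorder}, applied to the composite map, and then to estimate the third partial derivatives of that map by feeding the chain--rule identities recorded in Section~\ref{appen:preliminaries} into the self--bounding property of Assumption~\ref{ass::loss}.

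\textbf{Reduction via the Lindeberg principle.} I would set $f(\mathbf{Z}):=g\big(n^{-1}F_{\beta}((\bar{H}_{\loss_{\rho}}(w,\mathbf{Z}))_{w\in\mathcal{W}})\big)$; since $\loss_{i;\rho}\in C^{3}(\mathbb{R})$ and $F_{\beta}$ is smooth, $f\in C^{3}(\mathbb{R}^{n\times p})$, and the first-- and second--derivative integrability required in Lemma~\ref{Lipschitz_higherorder} is inherited from the hypotheses placed on $g$. That lemma then bounds $|E[f(\mathbf{X})]-E[f(\mathbf{W})]|$ by two symmetric terms of the form
\[
\tfrac{1}{2}\sum_{i=1}^{n}\sum_{j=1}^{k}\Big|E\Big[\int_{0}^{1}(1-t)^{2}\sum_{\ell,\ell',\ell''\in B_{j}}Z_{i,\ell}Z_{i,\ell'}Z_{i,\ell''}\,\partial_{i,\ell}\partial_{i,\ell'}\partial_{i,\ell''}f(\mathbf{Z}^{0}_{i,j}+tZ^{(B_{j})}_{i})\,dt\Big]\Big|,
\]
with $Z\in\{X,W\}$; the maximum over $Z\in\{X,W\}$ in the conclusion is exactly the maximum over these two terms. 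Pulling the triple sum over $B_{j}$ out of the absolute value collapses the monomial into $\big(\sum_{\ell\in B_{j}}|(Z^{(B_{j})}_{i})_{\ell}|\big)^{3}$, so everything comes down to a pointwise bound on $|\partial_{i,\ell}\partial_{i,\ell'}\partial_{i,\ell''}f|$.

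\textbf{Bounding the third derivative of the composite.} Using the expansion of $\partial_{i,j}\partial_{i,k}\partial_{i,\ell}g(n^{-1}F_{\beta}(A))$ from Section~\ref{appen:preliminaries}, each summand is a product of: a factor $g^{(m)}$, controlled by $K_{g}$; a power of $n^{-1}$ that is at least $n^{-1}$, so after summing over $i$ against the $\sum_{i=1}^{n}$ of Lemma~\ref{Lipschitz_higherorder} one recovers exactly the $\tfrac{1}{n}\sum_{i=1}^{n}$ of the conclusion, the $n^{-2},n^{-3}$ contributions being of lower order; at most two explicit factors of $\beta$, accounting for the $\max\{1,\beta^{2}\}$; and at most three Gibbs averages $\langle\cdot\rangle_{\bar{H}_{\beta}}$ of derivatives of $\bar{H}$. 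Since $\langle\cdot\rangle_{\bar{H}_{\beta}}$ is a convex combination over $w\in\mathcal{W}$, and $\partial_{i,a}\bar{H}(w,A)=\loss_{i;\rho}'((Aw)_{i})w_{a}$, $\partial_{i,a}\partial_{i,b}\bar{H}=\loss_{i;\rho}''((Aw)_{i})w_{a}w_{b}$, $\partial_{i,a}\partial_{i,b}\partial_{i,c}\bar{H}=\loss_{i;\rho}'''((Aw)_{i})w_{a}w_{b}w_{c}$ with $|w_{a}|\le L_{n}$, each such product is at most $C\,L_{n}^{3}$ times a product of at most three derivatives of $\loss_{i;\rho}$ evaluated at points $(Aw)_{i}$. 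The self--bounding property, together with $|\loss_{i;\rho}(x)|\le C L_{\loss_{i}}(1+|x|^{q_{0}})$ and $L_{\loss_{i}}\ge 1$, bounds that product by $C\,\mathcal{D}_{\loss}^{3}(\rho)\big(1+L_{\loss_{i}}^{\mathbf{q}}(1+\max_{w\in\mathcal{W}}|(Aw)_{i}|^{q_{0}\mathbf{q}})\big)$ with $\mathbf{q}=\max\{3q_{1},q_{1}+q_{2},q_{3}\}$; the constant ``$1$'' produces the first (loss--moment--free) term of the statement and $L_{\loss_{i}}^{\mathbf{q}}$ the second.

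\textbf{The main obstacle: the hybrid row.} The delicate step is controlling $|(Aw)_{i}|$ when the relevant row is $A_{i}=(\mathbf{Z}^{0}_{i,j})_{i}+tX^{(B_{j})}_{i}$: its $B_{j}$--block equals $tX^{(B_{j})}_{i}$ while its remaining coordinates are the fixed entries $C^{j}_{ik}$ ($k\notin B_{j}$) of $\mathbf{Z}^{0}_{i,j}$, so by block dependence the $B_{j}$--part and the off--$B_{j}$ part are mutually independent and independent of $X^{(B_{j})}_{i}$. I would bound $|(Aw)_{i}|\le L_{n}\big(\sum_{\ell\in B_{j}}|X_{i,\ell}|+\sum_{k\notin B_{j}}|C^{j}_{ik}|\big)$, raise to the even integer $\bar{\mathbf{q}}_{0}=2\lceil q_{0}\mathbf{q}/2\rceil\ge q_{0}\mathbf{q}$, pull out $L_{n}^{\bar{\mathbf{q}}_{0}}$ (so that, with the $L_{n}^{3}$ above, the prefactor $L_{n}^{\bar{\mathbf{q}}_{0}+3}$ emerges), and expand by the binomial theorem. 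After taking expectations and factoring along the independence just noted, the extreme term with all $\bar{\mathbf{q}}_{0}$ powers on $B_{j}$ merges with the cube from the Taylor remainder into $E\big[(\sum_{\ell\in B_{j}}|(Z^{(B_{j})}_{i})_{\ell}|)^{\bar{\mathbf{q}}_{0}+3}\big]$ (the third term), while the extreme term with all $\bar{\mathbf{q}}_{0}$ powers off $B_{j}$ becomes $E\big[(\sum_{\ell\in B_{j}}|(Z^{(B_{j})}_{i})_{\ell}|)^{3}\big]\sum_{k_{1},\dots,k_{\bar{\mathbf{q}}_{0}}\notin B_{j}}E[|C^{j}_{ik_{1}}\cdots C^{j}_{ik_{\bar{\mathbf{q}}_{0}}}|]$ (the fourth term); the remaining mixed terms are dominated by these two through elementary inequalities such as $x^{a}y^{b}\lesssim x^{a+b}+y^{a+b}$ and $t\le 1$, which I regard as routine bookkeeping. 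Collecting $K_{g}$, $\max\{1,\beta^{2}\}$, $L_{n}^{\bar{\mathbf{q}}_{0}+3}$, $\mathcal{D}_{\loss}^{3}(\rho)$, the $\tfrac{1}{n}\sum_{i=1}^{n}$, and the bounded numerical constants (the fixed number of chain--rule summands, the binomial coefficients, and the ratio bound posited in the lemma) into $C_{0}(\tau,q)$ yields the claim. I expect the only genuinely fussy part to be keeping $q_{0}\mathbf{q}\le\bar{\mathbf{q}}_{0}$ aligned throughout so that the hybrid--row expansion lands on exactly the four displayed terms rather than a larger family; no idea beyond the block--independence factorization is needed.
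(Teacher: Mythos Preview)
Your reduction via Lemma~\ref{Lipschitz_higherorder} and your chain--rule bookkeeping for the third derivative of $g(n^{-1}F_\beta(\cdot))$ are both in line with the paper. The gap is in your treatment of the ``hybrid row''. By passing from the Gibbs average $\langle\,\cdot\,\rangle_{\bar H_\beta}$ to $\max_{w\in\mathcal W}$, you are forced to bound $|(Aw)_i|$ by $L_n\sum_j|A_{ij}|$, and after expanding you land on
\[
\sum_{k_1,\dots,k_{\bar{\mathbf q}_0}\notin B_j} E\bigl[\,|C^{j}_{ik_1}\cdots C^{j}_{ik_{\bar{\mathbf q}_0}}|\,\bigr],
\]
with absolute values inside. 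This is not the quantity in the lemma, and the difference is not cosmetic: the whole point of the companion Lemma~\ref{general_inequality_dependent} is that whenever some block contributes a single index, the corresponding term $E[C^{j}_{ik_1}\cdots C^{j}_{ik_{\bar{\mathbf q}_0}}]$ vanishes by the zero--mean property. With absolute values those cancellations disappear, and your off--$B_j$ sum is of order $(p/\sqrt n)^{\bar{\mathbf q}_0}\asymp n^{\bar{\mathbf q}_0/2}$ in the proportional regime, rather than $O_d(1)$. So your fourth term, while a valid upper bound, is useless downstream and does not match the statement.

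The missing idea is a decoupling step that lets you keep the Gibbs average but make it independent of row~$i$. The paper does this with Chebyshev's association inequality: since $x\mapsto x^{\mathbf q}$ is increasing and $x\mapsto e^{-\beta x}$ is decreasing on $[0,\infty)$, one has
\[
\bigl\langle \bar H^{(i)}(w,A)^{\mathbf q}\bigr\rangle_{\bar H_\beta}
\;\le\;
\bigl\langle \bar H^{(i)}(w,A)^{\mathbf q}\bigr\rangle_{\bar H_{\beta;-i}},
\qquad \bar H_{\beta;-i}:=\beta\bigl(\bar H-\bar H^{(i)}\bigr).
\]
Now the weights $\langle\,\cdot\,\rangle_{\bar H_{\beta;-i}}$ do not depend on row~$i$, so conditioning on $X_i^{(B_j)}$ and using that $\bar{\mathbf q}_0$ is even, one can expand $\bigl((\mathbf Z^0_{i,j})^\top w\bigr)_i^{\bar{\mathbf q}_0}$ as a genuine multinomial (no absolute values) and factor $E[C^{j}_{ik_1}\cdots C^{j}_{ik_{\bar{\mathbf q}_0}}]\cdot E[\langle w_{k_1}\cdots w_{k_{\bar{\mathbf q}_0}}\rangle_{\bar H_{\beta;-i}}]$, bounding only the $w$--factor by $L_n^{\bar{\mathbf q}_0}$. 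That is what produces the sign--sensitive fourth term in the lemma. Your proposal would go through if you insert this association/cavity step in place of the $\max_w$ shortcut.
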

Second, we analyze and derive the upper bound of  the complicated summation of the moment  in Lemma \ref{Lipschitz_higherorder_d-dependent}.
\begin{lemma}\label{general_inequality_dependent}

   Suppose Assumptions \ref{ass::regime} and \ref{ass::loss} hold. Let $C_{i}^{j}$ denote the $i$-th row of $\mathbf{Z}^{0}_{ij}$.
   Consider two sequences of i.i.d. $\R^p$-valued random vectors $X_1,...,X_n$ and $W_1,...,W_n$. 
   Suppose that each $\{X_{i}\}_{i=1}^{n}$ and $\{W_{i}\}_{i=1}^{n}$ is $d$-block dependent. Suppose $\bar{\mathbf{q}}_{0}\leq p$ holds.  Then, for any $\set_{j}$, we have
    \begin{align*}
     &\sum_{k_{1},\cdots,k_{\bar{\mathbf{q}}_{0}}\notin \set_{j}} E\left[C^{j}_{ik_{1}}\cdots C^{j}_{ik_{\bar{\mathbf{q}}_{0}}}\right] \\
     &\lesssim_{\tau,q}\frac{d^{(\bar{\mathbf{q}}_{0}/2)+1}}{n}\max\left\{\max_{1\leq k\leq p}(\|X_{i,k}\|_{2^{(\bar{\mathbf{q}}_{0}/2)}})^{(\bar{\mathbf{q}}_{0})}, \max_{1\leq k\leq p}(\|W_{i,k}\|_{2^{(\bar{\mathbf{q}}_{0}/2)}})^{(\bar{\mathbf{q}}_{0})}\right\}\\
     & \quad +\left(\max_{1\leq i\leq k}\lambda_{\max}(\Sigma_{|\set_{i}|})\right)^{(\bar{\mathbf{q}}_{0}/2)},
\end{align*}
where the summation expresses the sum of all the combinations of $\mathbf{{\bar{q}}}_{0}$ random variables with repetition, exerted from the $i$-th row of $\mathbf{Z}_{i,j}^{0}$ except those in $\set_{j}$. 
\end{lemma}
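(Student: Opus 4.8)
The plan is to compute $E[C^j_{i,k_1}\cdots C^j_{i,k_{\bar{\mathbf{q}}_0}}]$ explicitly from the block structure and then organize the tuple sum combinatorially. First I would observe that, since $\{\set_\ell\}_{\ell=1}^k$ partitions $[p]$, one has $\sum_{\ell=1}^k X_i^{(\set_\ell)}=X_i$, so the $i$-th row of $\mathbf{Z}^0_{i,j}$ equals $\sum_{\ell<j}X_i^{(\set_\ell)}+\sum_{\ell>j}W_i^{(\set_\ell)}$; hence $C^j_{i,k}$ equals $X_{i,k}$ if $k$ lies in a block before $\set_j$, equals $W_{i,k}$ if $k$ lies in a block after $\set_j$, and vanishes for $k\in\set_j$. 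By the $d$-block dependence of $\{X_i\}$ and of $\{W_i\}$ (together with independence of the two sequences), the family $\{C^j_{i,k}\}_{k\notin\set_j}$ is mutually independent across distinct blocks, centered, and for $k,k'$ in a common block $\set_\ell$ ($\ell\ne j$) has covariance equal to the common value $(E[X_iX_i^\top])_{kk'}=(E[W_iW_i^\top])_{kk'}$. It follows that $E[\prod_a C^j_{i,k_a}]$ factorizes over the blocks occupied by $k_1,\dots,k_{\bar{\mathbf{q}}_0}$ and vanishes unless every occupied block carries at least two indices, so at most $\bar{\mathbf{q}}_0/2$ blocks are occupied (recall $\bar{\mathbf{q}}_0$ is even). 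I would then split the sum by the set partition $\pi$ of $\{1,\dots,\bar{\mathbf{q}}_0\}$ recording which positions share a block: only partitions all of whose parts have size $\ge 2$ survive, there are $O_q(1)$ of them, each with $r\le\bar{\mathbf{q}}_0/2$ parts, and for a tuple with induced partition exactly $\pi$ the occupied blocks are pairwise distinct, so $E[\prod_a C^j_{i,k_a}]=\prod_{P\in\pi}E[\prod_{a\in P}C^j_{i,k_a}]$. It then remains to bound each of the $O_q(1)$ partial sums in absolute value and add them.

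For the partition with all parts of size exactly $2$ ($r=\bar{\mathbf{q}}_0/2$) — which produces the second term of the claimed bound — each block-factor is a single covariance, and summing one pair over its block gives $\sum_{k,k'\in\set_\ell}(E[X_iX_i^\top])_{kk'}=\mathbf{1}_{\set_\ell}^\top(E[X_iX_i^\top])_{\set_\ell}\mathbf{1}_{\set_\ell}\ge 0$ by positive semidefiniteness. Keeping this sum \emph{signed} is the crucial point: applying the triangle inequality entrywise would cost a spurious factor $d^{\bar{\mathbf{q}}_0/2}$. Because these quadratic forms are nonnegative, the distinctness of the $\bar{\mathbf{q}}_0/2$ blocks may be relaxed, and the partial sum is at most $\prod_{s=1}^{\bar{\mathbf{q}}_0/2}\sum_\ell\mathbf{1}_{\set_\ell}^\top(E[X_iX_i^\top])_{\set_\ell}\mathbf{1}_{\set_\ell}\le\bigl(\tfrac1n\sum_\ell|\set_\ell|\lambda_{\max}(\Sigma_{\set_\ell})\bigr)^{\bar{\mathbf{q}}_0/2}\le\bigl(\tfrac pn\max_\ell\lambda_{\max}(\Sigma_{\set_\ell})\bigr)^{\bar{\mathbf{q}}_0/2}\lesssim_{\tau,q}(\max_\ell\lambda_{\max}(\Sigma_{\set_\ell}))^{\bar{\mathbf{q}}_0/2}$, using $\sum_\ell|\set_\ell|=p$ and Assumption \ref{ass::regime}.

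For any partition $\pi$ having a part of size $\ge 3$ we have $r\le\bar{\mathbf{q}}_0/2-1$, and now the triangle inequality is affordable since each $C^j_{i,k}$ is of order $n^{-1/2}$. I would bound each factor by generalized H\"older, $|E[\prod_{a\in P}C^j_{i,k_a}]|\le\prod_{a\in P}\|C^j_{i,k_a}\|_{|P|}$, which by monotonicity of $L^p$-norms (valid since $|P|\le\bar{\mathbf{q}}_0\le 2^{\bar{\mathbf{q}}_0/2}$) is $\le(\max_k\|X_{i,k}\|_{2^{\bar{\mathbf{q}}_0/2}}\vee\max_k\|W_{i,k}\|_{2^{\bar{\mathbf{q}}_0/2}})^{|P|}$, and count admissible tuples via Lemma \ref{dependent_group_sum}:
\[
\sum_{\ell_1,\dots,\ell_r}\,\prod_{s=1}^r|\set_{\ell_s}|^{|P_s|}\ \le\ \prod_{s=1}^r\sum_\ell|\set_\ell|^{|P_s|}\ \le\ \prod_{s=1}^r 4pd^{|P_s|-1}\ =\ 4^r p^r d^{\bar{\mathbf{q}}_0-r}.
\]
Since $p>d$, the quantity $p^r d^{\bar{\mathbf{q}}_0-r}$ over $r\le\bar{\mathbf{q}}_0/2-1$ is maximized at $r=\bar{\mathbf{q}}_0/2-1$, giving $p^{\bar{\mathbf{q}}_0/2-1}d^{\bar{\mathbf{q}}_0/2+1}$; combining with $p\asymp_\tau n$ and the $n^{-1/2}$ scale of all $\bar{\mathbf{q}}_0$ entries collapses this partial sum to $\lesssim_{\tau,q}\tfrac{d^{\bar{\mathbf{q}}_0/2+1}}{n}(\max_k\|X_{i,k}\|_{2^{\bar{\mathbf{q}}_0/2}}\vee\max_k\|W_{i,k}\|_{2^{\bar{\mathbf{q}}_0/2}})^{\bar{\mathbf{q}}_0}$, the first term. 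Summing the $O_q(1)$ contributions of the previous two paragraphs finishes the proof (the case $\bar{\mathbf{q}}_0\le 2$, where this last family is empty, is trivial). The main obstacle is the accounting in the all-pairs case: obtaining $\max_\ell\lambda_{\max}(\Sigma_{\set_\ell})$ rather than the lossier $d\,\lambda_{\max}(\Sigma)$ forces one to keep the covariance sums signed and exploit positive semidefiniteness block by block, and throughout one must track the exact powers of $n$, $d$ and $p$ (and the normalization of $X_{i,k}$) so that the $d$-exponent lands precisely on $\bar{\mathbf{q}}_0/2+1$.
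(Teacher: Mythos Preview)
Your proof is correct and follows essentially the same route as the paper: split the sum according to how the indices $k_1,\dots,k_{\bar{\mathbf q}_0}$ distribute among blocks, discard configurations with a singleton block (these vanish by centering), treat the all-pairs case by keeping the signed covariances and bounding the quadratic forms $\mathbf 1_{\set_\ell}^\top(\Sigma/n)_{\set_\ell}\mathbf 1_{\set_\ell}$ via $\lambda_{\max}(\Sigma_{\set_\ell})$, and handle the remaining configurations with $r\le\bar{\mathbf q}_0/2-1$ occupied blocks by a counting-times-uniform-moment argument yielding $p^{\bar{\mathbf q}_0/2-1}d^{\bar{\mathbf q}_0/2+1}n^{-\bar{\mathbf q}_0/2}$. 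Your organization via set partitions of $\{1,\dots,\bar{\mathbf q}_0\}$ and your appeal to Lemma~\ref{dependent_group_sum} for the block-size sums is a slightly tidier packaging than the paper's direct enumeration in its case~(i), but the substance is identical.
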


Now, we are ready to prove Proposition \ref{step4}.
\begin{proof}[Proof of Proposition \ref{step4}]
From Lemma \ref{Lipschitz_higherorder_d-dependent},  we have
\begin{align*}
&|E[g(n^{-1}F_{\beta}((\bar{H}_{\loss_{\rho}}(w,\mathbf{X})))_{w\in\mathcal{W
}}]-E[g(n^{-1}F_{\beta}((\bar{H}_{\loss_{\rho}}(w,\mathbf{W})))_{w\in\mathcal{W
}}]| \\
 &\lesssim_{\tau,q,M} K_{g}\max\{1,\beta^{2}\}L_{n}^{\bar{\mathbf{q}}_{0}+3}D^{3}_{\loss}(\rho) \\
    &\times\left\{\frac{1}{n}\sum_{i=1}^{n}\sum_{j=1}^{k}\max_{Z_{i}^{(\set_{j})}\in (X_{i}^{(\set_{j})}, W_{i}^{(\set_{j})})}E\left[\left(\sum_{\ell\in\set_j}\left|Z_{i,\ell}\right|\right)^{3}\right]\right. \\
     &+\frac{1}{n}\sum_{i=1}^{n}L^{\mathbf{q}}_{\loss_{i}}\sum_{j=1}^{k}\max_{Z_{i}^{(\set_{j})}\in (X_{i}^{(\set_{j})}, W_{i}^{(\set_{j})})}
E\left[\left(\sum_{\ell\in\set_{j}}\left|Z_{i,\ell}\right|\right)^{3}\right] \\
    &+\frac{1}{n}\sum_{i=1}^{n}L^{\mathbf{q}}_{\loss_{i}}\sum_{j=1}^{k}\max_{Z_{i}^{(\set_{j})}\in (X_{i}^{(\set_{j})}, W_{i}^{(\set_{j})})}
E\left[\left(\sum_{\ell\in\set_{j}}\left|Z_{i, \ell}\right|\right)^{\bar{\mathbf{q}}_{0}+3}\right] \\
&+\left.\frac{1}{n}\sum_{i=1}^{n}L^{\mathbf{q}}_{\loss_{i}}\sum_{j=1}^{k}\max_{Z_{i}^{(\set_{j})}\in (X_{i}^{(\set_{j})}, W_{i}^{(\set_{j})})}E\left[\left(\sum_{\ell\in\set_{j}}\left|Z_{i,\ell}\right|\right)^{3}\right]\sum_{k_{1},\cdots,k_{\bar{\mathbf{q}}_{0}}\notin \set_{j}} E\left[C^{j}_{ik_{1}}\cdots C^{j}_{ik_{\bar{\mathbf{q}}_{0}}}\right]\right\}.
\end{align*}
Then, we need to derive the upper bounds for the four terms.

By definition, for each $j\in\{1,\cdots, k\}$, we have
\begin{align*}  E\left[\left(\sum_{\ell\in\set_{j}}\left|X_{i, \ell}\right|\right)^{3}\right]  =&E\left[\sum_{\ell\in\set_{j}}\sum_{\ell\in\set_{j}}\sum_{\ell^{\prime\prime}\in\set_{j}}|X_{i, \ell}X_{i, \ell^{\prime}}X_{i,\ell^{\prime\prime}}|\right] \\
\leq & |\set_{j}|^{3}\max_{\ell,\ell^{\prime},\ell^{\prime\prime}\in\set_{j}} E\left[|X_{i, \ell}X_{i, \ell^{\prime}}X_{i,\ell^{\prime\prime}}|\right].
\end{align*}
By the Cauchy-Schwartz inequality, for any $\ell,\ell^{\prime},\ell^{\prime\prime}\in\set_{j}$, we have  
\begin{align*}
    E\left[|X_{i, \ell}X_{i, \ell^{\prime}}X_{i,\ell^{\prime\prime}}|\right]  &\leq \|X_{i, \ell}X_{i, \ell^{\prime}}\|_{2}\|X_{i, \ell^{\prime\prime}}\|_{2} \\
    &\leq\|X_{i,\ell}\|_{4} \|X_{i, \ell^{\prime}}\|_{4}\|X_{i,\ell^{\prime\prime}}\|_{4}  \\
    &\leq\max_{\ell\in\set_{j}}\|X_{i,\ell}\|^{3}_{4}.
\end{align*}
Similarly, for each $j\in\{1,\cdots, k\}$, we also have
\begin{align*}    &E\left[\left(\sum_{\ell\in\set_{j}}\left|X_{i, \ell}\right|\right)^{\bar{\mathbf{q}}_{0}+3}\right]  
\leq  |\set_{j}|^{\bar{\mathbf{q}}_{0}+3}\max_{\ell\in\set_{j}}\|X_{i,\ell}\|^{\bar{\mathbf{q}}_{0}+3}_{2^{(\bar{\mathbf{q}}_{0}+4)/2}}.
\end{align*}
Then, from Lemma \ref{dependent_group_sum} and \ref{general_inequality_dependent}, we obtain 
\begin{align*}
    \sum_{j=1}^{k}\max_{Z_{i}^{(\set_{j})}\in (X_{i}^{(\set_{j})}, W_{i}^{(\set_{j})})}
E\left[\left(\sum_{\ell\in\set_{j}}\left|Z_{i,\ell}\right|\right)^{3}\right]&\lesssim \frac{pd^{2}}{n^{3/2}} \max\left\{\max_{1\leq k\leq p}\|X_{i,k}\|^{3}_{4}, \max_{1\leq k\leq p}\|W_{i,k}\|^{3}_{4}\right\},
\end{align*}
and
\begin{align*}
&\sum_{j=1}^{k}\max_{Z_{i}^{(\set_{j})}\in (X_{i}^{(\set_{j})}, W_{i}^{(\set_{j})})}
E\left[\left(\sum_{\ell\in\set_{j}}\left|Z_{i,\ell}\right|\right)^{\bar{\mathbf{q}}_{0}+3}\right]\\
&\lesssim \frac{pd^{\bar{\mathbf{q}}_{0}+2}}{n^{(\bar{\mathbf{q}}_{0}+3)/2}}  \max\left\{\max_{1\leq k\leq p}\|X_{i,k}\|^{\bar{\mathbf{q}}_{0}+3}_{2^{(\bar{\mathbf{q}}_{0}+4)/2}} , \max_{1\leq k\leq p}\|W_{i,k}\|^{\bar{\mathbf{q}}_{0}+3}_{2^{(\bar{\mathbf{q}}_{0}+4)/2}}\right\},
\end{align*}
Furthermore, we have 
\begin{align*}
&\sum_{j=1}^{k}\max_{Z_{i}^{(\set_{j})}\in (X_{i}^{(\set_{j})}, W_{i}^{(\set_{j})})}E\left[\left(\sum_{\ell\in\set_{j}}\left|Z_{i,\ell}\right|\right)^{3}\right]\sum_{k_{1},\cdots,k_{\bar{\mathbf{q}}_{0}}\notin \set_{j}} E\left[C_{ik_{1}}\cdots C_{ik_{\bar{\mathbf{q}}_{0}}}\right] \\
&\lesssim_{\tau, q} \frac{pd^{3+(\bar{\mathbf{q}}_{0}/2)}}{n^{5/2}} \max\left\{\max_{1\leq k\leq p}\|X_{i,k}\|^{3}_{4}, \max_{1\leq k\leq p}\|W_{i,k}\|^{3}_{4}\right\}\\
& \qquad \times \left(\max\{\max_{1\leq k\leq p}\|X_{i,k}\|^{(\bar{\mathbf{q}}_{0})}_{2^{(\bar{\mathbf{q}}_{0}/2)}}, \max_{1\leq k\leq p}\|W_{i,k}\|^{(\bar{\mathbf{q}}_{0})}_{2^{(\bar{\mathbf{q}}_{0}/2)}}\}\right) \\
&+\frac{pd^{2}}{n^{3/2}}\max\left\{\max_{1\leq k\leq p}\|X_{i,k}\|^{3}_{4}, \max_{1\leq k\leq p}\|W_{i,k}\|^{3}_{4}\right\}\left(\lambda_{\max}(\Sigma)\right)^{\bar{\mathbf{q}}_{0}/2}.
\end{align*}
Therefore, the required result holds.
\end{proof}

\subsection{Proof of Lemma \ref{step2} and Lemma \ref{step3}}

\begin{proof}[Proof of Lemma \ref{step2}]
We construct a finite set \(\mathcal{S}_{n,\delta}\) such that \(|\mathcal{S}_{n,\delta}| \leq \lceil 2L_{n}/\delta \rceil^{p}\). Fix \(\delta \in (0,1)\). Define \(\delta \mathbf{Z}\) as the set of integer multiples of \(\delta\), i.e., \(\delta \mathbf{Z} := \{\dots, -2\delta, -\delta, 0, \delta, 2\delta, \dots\}\). We then define \(\mathcal{S}_{n,\delta}\) as the intersection of \(\delta \mathbf{Z}\) with \(\mathcal{S}_{n}\). By construction, the number of points in each dimension of \(\mathcal{S}_{n,\delta}\) is bounded above by \(\lceil 2L_{n}/\delta \rceil\), implying that \(|\mathcal{S}_{n,\delta}| \leq \lceil 2L_{n}/\delta \rceil^{p}\). Let \(w_{*}\) and \(w_{*,\delta}\) denote the arguments of the minimum in \(\mathcal{S}_{n}\) and \(\mathcal{S}_{n,\delta}\) for \(H_{\loss_{\rho}}(w, S)\), respectively. By the definition of \(\mathcal{S}_{n,\delta}\), it follows that \(\|w_{*} - w_{*,\delta}\| \leq \delta\). Then, we obtain
\begin{align*}
&\left| \mathbb{E}\left[ g\left(\min_{w \in \mathcal{S}_{n,\delta}} H_{\loss_{\rho}}(w, \mathbf{X}) \right)\right] - \mathbb{E}\left[ g\left(\min_{w \in \mathcal{S}_{n}} H_{\loss_{\rho}}(w, \mathbf{X}) \right)\right] \right| \\
&\leq \|g^{\prime}\|_{\infty} \mathbb{E}\left[\left(H_{\loss_{\rho}}(w_{*,\delta}, \mathbf{X}) - H_{\loss_{\rho}}(w_{*}, \mathbf{X})\right)_{+}\right] \\
&\leq \|g^{\prime}\|_{\infty}\left(\frac{1}{n} \mathbb{E}\left[\left|\sum_{i=1}^{n}\left(\loss_{i;\rho}((\mathbf{X}w_{*,\delta})_{i}) - \loss_{i;\rho}((\mathbf{X}w_{*})_{i})\right)\right|\right] + \sup_{\substack{w, w^{\prime} \in [-L_{n}, L_{n}]^{p} \\ \|w - w^{\prime}\|_{\infty} \leq \delta}} |\regularizer (w) - \regularizer (w^{\prime})|\right).
\end{align*}
From Assumption \ref{ass::loss}, we have
\[
|\loss_{i;\rho}((\mathbf{X}w_{*,\delta})_{i}) - \loss_{i;\rho}((\mathbf{X}w_{*})_{i})|
\leq 2\mathcal{D}_{\loss}(\rho)\left|(\mathbf{X}(w_{*} - w_{*,\delta}))_{i}\right| L_{\loss_{i}}^{q_{1}}\left(1 + |(\mathbf{X}w_{*})_{i}|^{q_{0}q_{1}} + |(\mathbf{X}w_{*,\delta})_{i}|^{q_{0}q_{1}}\right).
\]
Thus, the inequality yields
  \begin{align*}
        &\frac{1}{n}E\left[\left|\sum_{i=1}^{n}(\loss_{i;\rho}((\mathbf{X}w_{*,\delta})_{i})-\loss_{i;\rho}((\mathbf{X}w_{*})_{i}))\right|\right] \\
        &\leq 2\mathcal{D}_{\loss}(\rho)E\left[\|\mathbf{X}(w_{*}-w_{*,\delta})\|_{\infty}(1+\|\mathbf{X}w_{*}\|_{\infty}^{q_{0}q_{1}}+\|\mathbf{X}w_{*,\delta}\|_{\infty}^{q_{0}q_{1}})\frac{1}{n}\sum_{i=1}^{n}L^{q_{1}}_{\loss_{i}}\right] \\
        &\leq  2\mathcal{D}_{\loss}(\rho)\delta E\left[\left(\max_{1\leq i\leq n}\sum_{j=1}^{p}|X_{i,j}|\right)(1+\|\mathbf{X}w_{*}\|_{\infty}^{q_{0}q_{1}}+\|\mathbf{X}w_{*,\delta}\|_{\infty}^{q_{0}q_{1}})\frac{1}{n}\sum_{i=1}^{n}L^{q_{1}}_{\loss_{i}}\right] \\
        &\leq  2\mathcal{D}_{\loss}(\rho)\delta L^{q_{0}q_{1}}_{n}Av(\{L^{q_{1}}_{\loss_{i}}\})\left(E\left[\max_{1\leq i\leq n}\sum_{j=1}^{p}|X_{i,j}|\right]+E\left[\left(\max_{1\leq i\leq n}\sum_{j=1}^{p}|X_{i,j}|\right)^{q_{0}q_{1}+1}\right]\right),
    \end{align*}
where the second inequality follows from \(\|w_{*} - w_{*,\delta}\| \leq \delta\) and the third inequality follows from the \(\ell_{\infty}\) bound of the parameter space \(\mathcal{S}_{n}\). Therefore, the required result holds.
    
\end{proof}

\begin{proof}[Proof of Lemma \ref{step3}]
    It simply follows from Proposition 4.1 in \cite{han2023universality}.
\end{proof}

\section{Proof of Theorem \ref{Theorem_3.1}}

Following steps taken in \citet{thrampoulidis2018precise}, we  describe the reduction of the original optimization problem to  the convex-concave minimax scalar optimization defined in equation \eqref{eq::TAH_main}. Then, we verify the convergence result of the error of M-estimators. Suppose Assumptions \ref{ass::moment} and \ref{ass::eigen} hold throughout this appendix. Appendix \ref{sec::aux} provides proofs of all the lemmas stated in this section.

\subsection{Setting}

We restate the regularization problem we study
\begin{equation*}
    \hat{\theta}\in\argmin_{\theta\in\R^p}\left\{\mathcal{L}(\mathbf{y}-\mathbf{G}\theta)+\lambda f(\theta)\right\}, \quad \text{with}\quad Y_{i}=G_{i}^{\top}\theta_0+\xi_i,
\end{equation*}
where we define $\mathcal{L}(\mathbf{y}-\mathbf{G}\theta)=\sum_{i=1}^{n}\loss_{0}(Y_{i}-G_{i}^{\top}\theta)$ where $G_{i}\sim \mathcal{N}(0,\Sigma)$ and  $\xi_i$ is an independent noise variable.

We study the asymptotic behavior of $\|\hat{\theta}-\theta_0\|_{\Sigma}/\sqrt{p}$. For simplicity, 
let $w$ denote $(\theta-\theta_0)/\sqrt{p}$ Because $\mathbf{y}=A\Sigma^{1/2}\theta_{0}+\xi$, we  obtain
\begin{equation}\label{TAH_64}
        \hat{w}\in\argmin_{w\in\R^p}\frac{1}{n}\left\{\mathcal{L}(\xi-\sqrt{p}\mathbf{A}\Sigma^{1/2}w)+\lambda f(\theta_{0}+\sqrt{p}w)\right\}.
\end{equation}
where we define $\mathbf{A}$ as $(A_1,\cdots, A_n)^{\top}$ where $A_i\sim \mathcal{N}(0,I)$. For the establishment of Theorem \ref{Theorem_3.1},  we need to verify Lemma A.1 to A.5 in  \citet{thrampoulidis2018precise} with suitable modifications.

\subsection{Boundedness of error}

First, we reduce the region of minimization to some compact sets. Define $\mathcal{S}_{w}$ as $\{w\in\mathbb{R}^{p}|\|w\|_{\Sigma}\leq K_{\alpha}\}$ where
\begin{equation}\label{TAH_65}
K_{\alpha}:=\alpha_{*}+\zeta,  
\end{equation}
for a constant $\zeta>0$ and consider the bounded version of \eqref{TAH_64}:
\begin{equation}\label{TAH_66}
        \hat{w}^{B}\in\argmin_{w\in\mathcal{S}_{w}}\frac{1}{n}\left\{\mathcal{L}(\xi-\sqrt{p}\mathbf{A}\Sigma^{1/2}w)+\lambda f(\theta_{0}+\sqrt{p}w)\right\}.
\end{equation}
To make $\mathcal{S}_{w}$ compact, we need Assumptions \ref{ass::moment} and \ref{ass::eigen}. Then, we state the equivalence of the convergence of $\|\hat{w}^{B}\|_{\Sigma}$ and $\|\hat{w}\|_{\Sigma}$ in probability. 
\begin{lemma}\label{Lemma_A.1} For the two optimizations in \eqref{TAH_64} and \eqref{TAH_66}, let $\hat{w}$ and $\hat{w}^{B}$ be optimal solutions.  If $\|\hat{w}^{B}\|_{\Sigma}\xrightarrow[]{P}\alpha^{*}$, then $\|\hat{w}\|_{\Sigma}\xrightarrow[]{P}\alpha^{*}$.
\end{lemma}
The proof of Lemma \ref{Lemma_A.1} only utilizes the property of semi-norm and the convexity of the objective function. Thus, this result trivially holds.

\subsection{Identifying (PO)}

By utilizing the duality with constraints, $\mathbf{v}=\xi-\sqrt{p}\mathbf{A}\Sigma^{1/2}w$, we obtain
\begin{equation}\label{TAH_67}
        \hat{w}=\argmin_{w\in\mathcal{S}_{w},\mathbf{v}}\max_{\mathbf{u}}\frac{1}{\sqrt{p}}\left\{-\mathbf{u}^{\top}(\sqrt{p}\mathbf{A})\Sigma^{1/2}w+\mathbf{u}^{\top}\xi-\mathbf{u}^{\top}\mathbf{v}\right\}+\frac{1}{p}\left\{\mathcal{L}(\mathbf{v})+\lambda f(\theta_{0}+\sqrt{p}w)\right\}.
\end{equation}
To make this setting applicable to CGMT, we consider the following bounded (PO). 
\begin{equation}\label{TAH_68}
        \hat{w}=\argmin_{w\in\mathcal{S}_{w},\mathbf{v}}\max_{\mathbf{u}\in\mathcal{S}_{u}}\frac{1}{\sqrt{p}}\left\{-\mathbf{u}^{\top}(\sqrt{p}\mathbf{A})\Sigma^{1/2}w+\mathbf{u}^{\top}\xi-\mathbf{u}^{\top}\mathbf{v}\right\}+\frac{1}{p}\left\{\mathcal{L}(\mathbf{v})+\lambda f(\theta_{0}+\sqrt{p}w)\right\},
\end{equation}
where we define $\mathcal{S}_{u}:=\{\mathbf{u}\in\mathbb{R}^{n}|\|\mathbf{u}\|_{2}\leq K_{\beta}\}$ and $K_{\beta}$ a sufficiently large constant. 
By Assumption \ref{ass::TAH_1}-\eqref{ass::TAH_1_complement}, we can verify the equivalence of \eqref{TAH_67} and \eqref{TAH_68}:
\begin{lemma}\label{Lemma_A.2} If Assumption \ref{ass::TAH_1}-\eqref{ass::TAH_1_complement} holds, then there exists a sufficiently large constant $K_{\beta}$, such that the
optimization problem in \eqref{TAH_68} is equivalent to that in \eqref{TAH_66}, with probability approaching 1 as $n\rightarrow\infty$.
\end{lemma}
The proof of Lemma \ref{Lemma_A.2} also trivially holds because it heavily depends on Assumption \ref{ass::TAH_1}. \eqref{ass::TAH_1_complement}. Moreover, we can verify $\|\mathbf{A}\Sigma^{1/2}\|_{2}\leq C$ with high probability because of Assumption \ref{ass::moment} and \ref{ass::eigen}. 

\subsection{Analyzing (AO)}

Through the Fenchel conjugate, \eqref{TAH_68} can be reduced to 
\begin{equation}\label{TAH_69}
\hat{w}=\argmin_{w\in\mathcal{S}_{w},\mathbf{v}}\max_{\mathbf{u}\in\mathcal{S}_{u},s}\frac{1}{\sqrt{p}}\left\{-\mathbf{u}^{\top}(\sqrt{p}\mathbf{A})\Sigma^{1/2}w+\mathbf{u}^{\top}\xi-\mathbf{u}^{\top}\mathbf{v}\right\}+\frac{1}{p}\left\{\mathcal{L}(\mathbf{v})+\lambda \mathbf{s}^{\top}\theta_0+\lambda \sqrt{p}\mathbf{s}^{\top}w-\lambda f^{*}(\mathbf{s})\right\}.
\end{equation}
where we define $f^{*}(\mathbf{s})$ as $\sup_{\mathbf{x}}\mathbf{s}^{\top}\mathbf{x}-f(\mathbf{x})$. Then, consider the following problem as the (AO). Because the function above itself is convex, it is easy to deal with:
\begin{equation}\label{TAH_71}
\begin{split}
    \phi(\mathbf{g},\mathbf{h}):=\max_{0\leq \beta\leq K_{\beta}, \mathbf{s}}\min_{\|w\|_{\Sigma}\leq K_{\alpha}, \mathbf{v}}\max_{\|\mathbf{u}\|_{2}=\beta}\frac{1}{\sqrt{p}}(\|w\|_{\Sigma}\mathbf{g}+\xi-\mathbf{v})^{\top}\mathbf{u}-\frac{1}{\sqrt{p}}\|\mathbf{u}\|_{2}\mathbf{h}^{\top}\Sigma^{1/2}w \\
    +\frac{1}{p}\left\{\mathcal{L}(\mathbf{v})+\lambda \mathbf{s}^{\top}\theta_0+\lambda \sqrt{p}\mathbf{s}^{\top}w-\lambda f^{*}(\mathbf{s})\right\}.
\end{split}
\end{equation}
where we define $\mathbf{g}\sim \mathcal{N}(0,I_n)$ and $\mathbf{h}\sim \mathcal{N}(0,I_p)$, respectively. Therefore, we obtain the following lemma:
\begin{lemma}\label{Lemma_A.3}
Let $\hat{w}(\mathbf{A})$ be an optimal solution to problem \eqref{TAH_64}, and consider the auxiliary optimization (AO) problem in \eqref{TAH_71}. Let $\alpha^*$ be as defined in Theorem \ref{Theorem_3.1}. For any $\varepsilon > 0$, define the set $\mathcal{S} := \left\{ w \in \mathbb{R}^p : \left| \| w \|_\Sigma - \alpha^* \right| < \varepsilon \right\}.$ Let $\phi_{\mathcal{S}^c}(\mathbf{g}, \mathbf{h})$ denote the optimal cost of the AO problem in \eqref{TAH_71}, but with the additional constraint that $w \notin \mathcal{S}$.

Assume that for every $K_\alpha > \alpha^*$ and sufficiently large $K_\beta$, there exist constants $\overline{\phi}$ and $\overline{\phi}_{\mathcal{S}^c}$ satisfying $\overline{\phi} < \overline{\phi}_{\mathcal{S}^c}$, such that for all $\eta > 0$, the following holds with probability approaching 1 as $n \to \infty$:
\begin{itemize}
    \item [(a)] $\phi(\mathbf{g}, \mathbf{h}) < \overline{\phi} + \eta$,
     \item [(b)] $\phi_{\mathcal{S}^c}(\mathbf{g}, \mathbf{h}) > \overline{\phi}_{\mathcal{S}^c} - \eta$.
\end{itemize}
Then it follows that:
$$
\lim_{n \to \infty} \Pr\left( \left| \| \hat{w}(\mathbf{A}) \|_\Sigma - \alpha^* \right| < \varepsilon \right) = 1.
$$
\end{lemma}
We can prove the above result by the main theorem of  \citet{akhtiamov2024novel} that generalizes the CGMT to the setting with a general covariance matrix.

\subsection{Scalarization}

To simplify the (AO) problem, we consider the transformation of the optimization into another optimization problem involving only scalar variables. Through techniques of scalarization, we get the following representation. 

\begin{equation}\label{TAH_72}
\begin{split}
    \inf_{\substack{0\leq \alpha\leq K_{\alpha} \\ \tau_g>0}}\sup_{\substack{0\leq \beta\leq K_{\beta} \\ \tau_h>0}} &\frac{\beta \tau_g}{2}+\frac{1}{p}e_{\mathcal{L}}\left(\alpha \mathbf{g}+\xi;\frac{\tau_g}{\beta}\right)\\
    &-\begin{cases}
        \frac{\alpha\tau_h}{2}+\frac{\beta^{2}\alpha}{2\tau_h}\frac{\|\mathbf{h}\|^{2}}{p}-\lambda\frac{1}{p}e_{g}\left(\frac{\beta\alpha}{\tau_h}\mathbf{h}+\Sigma^{1/2}\theta_0;\frac{\alpha\lambda}{\tau_h}\right), & \text{$\alpha>0$} \\
        \frac{\lambda}{p}f(\theta_0), &\text{$\alpha=0$} \\
    \end{cases}.
\end{split}
\end{equation}
where we define
    $e_{\omega}(\mathbf{u};\tau):=\min_{\mathbf{v}}\{\frac{1}{2\tau}\|\mathbf{u}-\mathbf{v}\|^{2}_{2}+\omega(\mathbf{v})\}$, and $g(\theta)=f(\Sigma^{-1/2}\theta)$.
Then, we obtain the following lemma:
\begin{lemma}\label{Lemma_A.4}
 The following statements are true regarding the two minimax
optimization problems in \eqref{TAH_71} and \eqref{TAH_72}:
\begin{itemize}
    \item [(i)] They have the same optimal cost.
    \item [(ii)] The objective function \eqref{TAH_72} is continuous on its domain, (jointly) convex in $(\alpha,\tau_g)$ and (jointly)
concave in $(\beta,\tau_h)$.
    \item [(iii)] The order of inf-sup in \eqref{TAH_72} can be flipped without changing the optimization.
\end{itemize}
\end{lemma}

\subsection{Convergence analysis}

Finally, we analyze the limit of the objective function with four scalar variables defined in \eqref{TAH_72}. We work with the following function $\phi(\mathbf{g},\mathbf{h},\xi,\theta_0)$:
\begin{align}
    &\phi(\mathbf{g},\mathbf{h},\xi,\theta_0)=\inf_{\substack{0\leq \alpha\leq K_{\alpha} \\ \tau_g>0}}\sup_{\substack{0\leq \beta\leq K_{\beta} \\ \tau_h>0}} \mathcal{R}_{p}(\alpha,\tau_g,\beta,\tau_h;\mathbf{g},\mathbf{h},\xi,\theta_0),\label{TAH_73}
\end{align}
where we define
\begin{align}
        \mathcal{R}_{p}:=&\frac{\beta \tau_g}{2}+\frac{1}{p}\left\{e_{\mathcal{L}}\left(\alpha \mathbf{g}+\xi;\frac{\tau_g}{\beta}\right)-\mathcal{L}(\xi)\right\} \\
        -&\begin{cases}
        \frac{\alpha\tau_h}{2}+\frac{\beta^{2}\alpha}{2\tau_h}\frac{\|\mathbf{h}\|^{2}}{p}-\frac{\lambda}{p}\left\{e_{g}\left(\frac{\beta\alpha}{\tau_h}\mathbf{h}+\Sigma^{1/2}\theta_0;\frac{\alpha\lambda}{\tau_h}\right)-f(\theta_0)\right\}, \quad &\text{$\alpha>0$} \\
        0, \quad &\text{$\alpha=0$} \\
    \end{cases}.
    \notag
\end{align}
\begin{lemma}\label{Lemma_A.5}
    Let $\mathcal{R}_{p}(\alpha,\tau_g,\beta,\tau_h):= \mathcal{R}_{p}(\alpha,\tau_g,\beta,\tau_h;\mathbf{g},\mathbf{h},\xi,\theta_0)$ be defined as in \eqref{TAH_73} and 
    \begin{equation}\label{TAH_74}
        \phi_{\mathcal{A}}:=\phi_{\mathcal{A}}(\mathbf{g},\mathbf{h},\xi,\theta_0)=\inf_{\substack{\alpha\in\mathcal{A} \\ \tau_g>0}}\sup_{\substack{0\leq \beta\leq K_{\beta} \\ \tau_h>0}} \mathcal{R}_{p}(\alpha,\tau_g,\beta,\tau_h)
    \end{equation}
    for $\mathcal{A}\subset[0,\infty)$. Moreover, we define $\overline{\phi}_{\mathcal{A}}$ as the following deterministic convex program:
    \begin{equation}\label{TAH_75}
    \begin{split}
        \overline{\phi}_{\mathcal{A}}:=&\inf_{\substack{\alpha\in\mathcal{A} \\ \tau_g>0}}\sup_{\substack{0\leq \beta\\ \tau_h>0}} \mathcal{\mathcal{D}}(\alpha,\tau_g,\beta,\tau_h) \\
        :=&\begin{cases}
            \frac{\beta \tau_g}{2}+\tau_0 L\left(\alpha,\frac{\tau_g}{\beta}\right),\quad &\text{$\beta>0$} \\
            -\tau_0 L_0,\quad &\text{$\beta=0$} \\
        \end{cases}-\begin{cases}
        \frac{\alpha\tau_h}{2}+\frac{\beta^{2}\alpha}{2\tau_h}-\lambda F\left(\frac{\alpha\beta}{\tau_h},\frac{\alpha\lambda}{\tau_h}\right), \quad &\text{$\alpha>0$} \\
        0, \quad &\text{$\alpha=0$} \\
    \end{cases}
    \end{split}
    \end{equation}
    where $L$ and $F$ defined in Assumption \ref{ass::TAH_1} \eqref{ass::TAH_1_L}-\eqref{ass::TAH_1_F} and we define $\tau_0$ as the limit of $n/p$. If Assumption \ref{ass::TAH_1}  \eqref{ass::TAH_1_L}-\eqref{ass::TAH_1_F} and \ref{ass::TAH_2} hold, then, 
    \begin{itemize}
        \item [(a)]  $\mathcal{R}_{p}(\alpha,\tau_g,\beta,\tau_h)\xrightarrow[p]{}\mathcal{\mathcal{D}}(\alpha,\tau_g,\beta,\tau_h)$ 
 for all $(\alpha,\tau_g,\beta,\tau_h)$, and, $\mathcal{D}(\alpha,\tau_g,\beta,\tau_h)$ is convex in $(\alpha,\tau_g)$ and concave in $(\beta,\tau_h)$.

 \item[(b)]
Assume $\alpha_{*}$ is the unique minimizer in \eqref{TAH_75} with $\mathcal{A}:= [0,\infty)$. For any $\varepsilon>0$, define $\mathcal{S}_{\varepsilon}:= \{\alpha||\alpha-\alpha_{*}|<\varepsilon\}$. Then, for any sufficiently large constants $K_{\alpha}>\alpha_{*}$ and $K_\beta >0$, and for all $\eta>0$, it holds with probability approaching 1 as $n\rightarrow\infty$:
\begin{itemize}
    \item [(i)] $\phi_{[0,K_{\alpha}]}<\overline{\phi}_{[0,\infty)}+\eta$,
        \item [(ii)] $\phi_{[0,K_{\alpha}]\backslash\mathcal{S}_\varepsilon}>\overline{\phi}_{[0,\infty)\backslash\mathcal{S}_\varepsilon}-\eta$,
            \item [(iii)] $\overline{\phi}_{[0,\infty)\backslash\mathcal{S}_\varepsilon}>\overline{\phi}_{[0,\infty)}$.
\end{itemize}
    \end{itemize}
\end{lemma}

\subsection{Proof of Theorem \ref{Theorem_3.1}}

Finally, we conclude the proof of Theorem \ref{Theorem_3.1} through the result of Lemma \ref{Lemma_A.1} to Lemma \ref{Lemma_A.5}.

\begin{proof}[Proof of Theorem \ref{Theorem_3.1}]

Take any $\varepsilon>0$ and define $\mathcal{S}_{\varepsilon}$ as $\{w\in\mathbb{R}^{p}||\|w\|_{\Sigma}-\alpha_{*}|<\varepsilon\}$. Let $K_{\alpha}>\alpha_{*}$ and arbitrary large $K_{\beta}>0$. From Lemma \ref{Lemma_A.4} (i), $\phi(\mathbf{g},\mathbf{h})$ has the same optimal cost as that of \eqref{TAH_72}. From the result of Lemma \ref{Lemma_A.5}, there exist constants $\overline{\phi}_{[0,\infty)}$
 and     $\overline{\phi}_{[0,\infty)\backslash\mathcal{S}_{\varepsilon}}$ with $\overline{\phi}_{[0,\infty)\backslash\mathcal{S}_{\varepsilon}}>\overline{\phi}_{[0,\infty)}$, which satisfies conditions of Lemma \ref{Lemma_A.3}. Therefore, we obtain
 \begin{equation*}
     \lim_{n\rightarrow\infty}\Pr(|\|\hat{w}\|_{\Sigma}-\alpha_{*}|<\varepsilon)=1.
 \end{equation*}
\end{proof}

\section{Proof for applications}

\subsection{Proof of Lemma \ref{robus_linf}}

\begin{proof}
Let $\hat{w}^{(s)}$ denote the column leave-one-out estimator, i.e., $\hat{w}^{(s)}:=\arg\min_{w\in\mathbf{R}^{p}:w_{s}=0}H(w)$.
 Without assuming Assumption \ref{ass::robust}, as in the proof of Proposition 3.11 in \citet{han2023universality}, we obtain
 \begin{equation*}
     |\hat{w}_{\mathbf{X},s}|\lesssim \lambda^{-1}\left|\sum_{i=1}^{n}X_{i,s}\psi^{\prime}_{0}(X^{\top}_{i,-s}\hat{w}^{(s)}_{\mathbf{X},-s}-\xi_{i})\right|+\|\theta_{0}\|_{\infty},
 \end{equation*}
where we define $\hat{w}_{\mathbf{X},s}$ and $\hat{w}^{(s)}_{\mathbf{X},-s}$ as the $s-$th element of $\hat{w}_{\mathbf{X}}$ and the one removing the $s-$th element of $\hat{w}^{(s)}$ from the original vector, respectively. Therefore, for $k\geq2$, we obtain
\begin{align*}
    E[|\hat{w}_{\mathbf{X},s}|^{k}]\lesssim_{k}& \lambda^{-k} E\left[\left|\sum_{i=1}^{n}X_{i,s}\psi^{\prime}_{0}(X^{\top}_{i,-s}\hat{w}^{(s)}_{\mathbf{X},-s}-\xi_{i})\right|^{k}\right]+\|\theta_{0}\|^{k}_{\infty} \\
    \overset{\text{(i)}}{\leq}& (L_{0}/\lambda)^{k}E\left[\left|\sum_{i=1}^{n}X_{i,s}\right|^{k}\right]+\|\theta_{0}\|^{k}_{\infty} \\
       \overset{\text{(ii)}}{\lesssim_{k}}& (L_{0}/\lambda)^{k}E\left[\left|\sum_{i=1}^{n}\varepsilon_{i}X_{i,s}\right|^{k}\right]+\|\theta_{0}\|^{k}_{\infty} \\
      \overset{\text{(iii)}}{\lesssim_{k}}& (L_{0}/\lambda)^{k}E\left[\left|\sum_{i=1}^{n}X^{2}_{i,s}\right|^{k/2}\right]+\|\theta_{0}\|^{k}_{\infty},
\end{align*}
where $\{\varepsilon_{i}\}$ are i.i.d. Rademacher random variables independent of $\mathbf{X}$. Inequality (i) follows from Assumption \ref{ass::robust}-\ref{ass::robust::loss}. We applied the symmetrization inequality of Theorem 3.1.21 in \citet{gine2016mathematical} and Khintchine’s inequality to inequality (ii) and inequality (iii), respectively.
From Jensen's inequality, we obtain
\begin{equation*}
    E\left[\left|\sum_{i=1}^{n}X^{2}_{i,s}\right|^{k/2}\right]\leq \left(E\left[\left|\sum_{i=1}^{n}X^{2}_{i,s}\right|^{\lceil k/2\rceil }\right]\right)^{k/2{\lceil k/2\rceil}}.
\end{equation*}
By definition, $X_{i,s}=X^{0}_{i,s}/\sqrt{n}$ holds. Then, we have
\begin{equation*}
    E\left[\left|\sum_{i=1}^{n}X^{2}_{i,s}\right|^{\lceil k/2\rceil }\right]=E\left[\left(\frac{1}{n}\sum_{i=1}^{n}(X^{0}_{i,s})^{2}\right)^{\lceil k/2\rceil }\right].
\end{equation*}
By applying  H\"older  inequality, we have
\begin{equation*}
   E\left[\left(\frac{1}{n}\sum_{i=1}^{n}(X^{0}_{i,s})^{2}\right)^{\lceil k/2\rceil }\right]\leq \frac{n^{\lceil k/2\rceil-1}}{n^{\lceil k/2\rceil}}  E\left[\sum_{i=1}^{n}(X^{0}_{i,s})^{2\lceil k/2\rceil }\right].
\end{equation*}
Because $X^{0}_{i,s}$ is i.i.d. with respect to $i$, we have
\begin{align*}
    \frac{n^{\lceil k/2\rceil-1}}{n^{\lceil k/2\rceil}}  E\left[\sum_{i=1}^{n}(X^{0}_{i,s})^{2\lceil k/2\rceil }\right]=&\frac{1}{n}  E\left[\sum_{i=1}^{n}(X^{0}_{i,s})^{2\lceil k/2\rceil }\right] \\
    =&E[(X^{0}_{i,s})^{2\lceil k/2\rceil }].
\end{align*}
Therefore, the required result holds.
\end{proof}

\subsection{Proof of Theorem \ref{universality_robust}}

\begin{proof}
It holds from Lemma \ref{robus_linf} and Markov's inequality that
\begin{equation*}
   \Pr(\|\hat{w}^{R}_{\mathbf{X}}\|_{\infty}>L_{n})\leq L_{n}^{-k^{*}}E\max_{j\in [n]}|\hat{w}^{R}_{\mathbf{X},j}|^{k^{*}}\lesssim_{L_{0},pk^{*}\lambda}(pL^{-k^{*}}_{n})((M_{k^{*};\bfX})^{k/2\lceil k/2 \rceil}+\|\theta_{0}\|^{k^{*}}_{\infty}).
\end{equation*}
As in \citet{han2023universality}, we apply Theorem \ref{main_universality_structure}.
In the robust case, $L_{\ell_{i}}=CL_{0}(1+|\xi_{i}|)$, $\mathbf{q}=0$, $\mathcal{D}_{\loss}(\rho)=CL_0/\rho^{2}$,
$\mathcal{M}_{\loss}(\rho)= CL_0\rho$, and $\bar{\rho}= 1/(CL_{0})$ hold from Lemma B.1 in \citet{han2023universality}. Then, by setting $\rho$ and $\delta$ appropriately, we have
\begin{equation*}
    r_{R}(L_{n})\lesssim_{L_{0}}\left(L_{n}\log^{2/3}(n)\sigma^{1/3}_{n}\right)^{1/7}.
\end{equation*}
Therefore, there exists $K>0$ depending on $k^{*}, M_{k^{*};\bfX}, \tau,\lambda$ such that
\begin{equation*}
    \Pr(\hat{w}^{R}_{\mathbf{X}}\in \mathcal{S}_{n})
    \leq 4\varepsilon_{n}+K\left\{(nL^{-k^{*}}_{n})(1\vee \|\theta_{0}\|_{\infty}^{k^{*}})+(1\vee \rho^{-3}_{0})\left(L_{n}\log^{2/3}(L_{n})\sigma^{1/3}_{n}\right)^{1/7}\right\}.
\end{equation*}
Now setting $L_{n}:=(n/d^{4})^{1/6-\delta}$ where we define $\delta:=\frac{1}{6}\left(\frac{7\varepsilon k^{*}-42}{7\varepsilon k^{*}+1}\right)$. Therefore, we obtain
\begin{equation*}
    \Pr(\hat{w}^{R}_{\mathbf{X}}\in \mathcal{S}_{n})\leq 4\varepsilon_{n}+K(1+\|\theta_{0}\|^{k^{*}}_{\infty}+\rho^{-3}_{0})n^{-{1}/{45}}.
\end{equation*}
\end{proof}

\subsection{Proof of Theorem \ref{universality_robust_CGMT}}
\begin{proof}

All the conditions for Theorem \ref{main_universality_structure}  and Theorem \ref{Theorem_3.1} trivially hold from Theorem \ref{universality_robust} and assumptions of Theorem \ref{universality_robust} as in \citet{han2023universality}. 
\end{proof}

\subsection{Proof of Theorem \ref{universality_Ridge_Lasso_CGMT}}
\begin{proof}

All the conditions for Theorem \ref{main_universality_structure}  and Theorem \ref{Theorem_3.1} trivially hold from assumptions of Theorem \ref{universality_Ridge_Lasso_CGMT} as in Theorem \ref{universality_robust_CGMT}. 
\end{proof}

\section{Proof of auxiliary results}\label{sec::aux}

\subsection{Proof of Lemma \ref{Lipschitz_higherorder_d-dependent}}

\begin{proof}
Define \(\bar{H}_{\beta} := \beta \cdot \bar{H}\) and \(F_{\beta}(A) := F_{\beta}(\bar{H}(w, A)_{w \in \mW})\). As an initial step, we apply the result from Lemma 5, reducing the difference to the third derivatives of the functions involved. As shown in Appendix \ref{appen:preliminaries}, the third derivative of the function \(g(n^{-1}F_{\beta}(\cdot))\) can be bounded by the derivatives of the loss function \(\bar{H}\). For any $i\in\{1,\cdots,n\}$ and $j,k,\ell\in\{1,\cdots,d\}$, we have the following upper bound of derivatives of $\bar{H}$:
\begin{align*}
    |\langle\partial_{i,j} \bar{H}\rangle_{\bar{H}_{\beta}}|&\leq L_{n}D_{\loss}(\rho)(1+\langle\bar{H}^{(i)}(w,A)^{q_{1}}\rangle_{\bar{H}_{\beta}}), \\
     |\langle\partial_{i,j}\partial_{i,k} \bar{H}\rangle_{\bar{H}_{\beta}}|&\leq L^{2}_{n}D_{\loss}(\rho)(1+\langle\bar{H}^{(i)}(w,A)^{q_{2}}\rangle_{\bar{H}_{\beta}}), \\
      |\langle\partial_{i,j}\partial_{i,k}\partial_{i,\ell} \bar{H}\rangle_{\bar{H}_{\beta}}|&\leq L^{3}_{n}D_{\loss}(\rho)(1+\langle\bar{H}^{(i)}(w,A)^{q_{3}}\rangle_{\bar{H}_{\beta}}),
\end{align*}
where we define $\bar{H}^{(i)}(w,A):=\loss_{i:\rho}((Aw)_{i})\geq0$. so $\bar{H}(w,A)=\sum_{i=1}^{n}\bar{H}^{(i)}(w,A)+n\regularizer (w)$.
Therefore, any $i\in\{1,\cdots,n\}$ and $j,k,\ell\in\{1,\cdots,d\}$, we have the following bounds:
\begin{align*}
    |\langle\partial_{i,j} \bar{H}\rangle_{\bar{H}_{\beta}}\langle\partial_{i,k} \bar{H}\rangle_{\bar{H}_{\beta}}\langle\partial_{i,\ell} \bar{H}\rangle_{\bar{H}_{\beta}}|&\leq L^{3}_{n}D^{3}_{\loss}(\rho)(1+\langle\bar{H}^{(i)}(w,A)^{q_{1}}\rangle_{\bar{H}_{\beta}})^{3} \\
    &\leq2^2 L^{3}_{n}D^{3}_{\loss}(\rho)(1+\langle\bar{H}^{(i)}(w,A)^{3q_{1}}\rangle_{\bar{H}_{\beta}}), \\
    |\langle\partial_{i,j} \bar{H}\rangle_{\bar{H}_{\beta}}\langle\partial_{i,k} \bar{H}\partial_{i,\ell} \bar{H}\rangle_{\bar{H}_{\beta}}|&\leq \langle|w_{j}\partial\loss_{i:\rho}((Aw)_{i})|\rangle_{\bar{H}_{\beta}}\langle|w_{k}||w_{\ell}||\partial\loss_{i:\rho}((Aw)_{i})|^2\rangle_{\bar{H}_{\beta}} \\
    &\leq L^{3}_{n} \langle|\partial\loss_{i:\rho}((Aw)_{i})|^3\rangle_{\bar{H}_{\beta}} \\
    &\leq 2^2 L^{3}_{n}D^{3}_{\loss}(\rho)(1+\langle\bar{H}^{(i)}(w,A)^{3q_{1}}\rangle_{\bar{H}_{\beta}}), \\
    |\langle\partial_{i,j} \bar{H}\partial_{i,k} \bar{H}\partial_{i,\ell} \bar{H}\rangle_{\bar{H}_{\beta}}|
    &\leq L^{3}_{n} \langle|\partial\loss_{i:\rho}((Aw)_{i})|^3\rangle_{\bar{H}_{\beta}} \\
    &\leq 2^2 L^{3}_{n}D^{3}_{\loss}(\rho)(1+\langle\bar{H}^{(i)}(w,A)^{3q_{1}}\rangle_{\bar{H}_{\beta}}), \\
    |\langle\partial_{i,j}\partial_{i,k} \bar{H}\rangle_{\bar{H}_{\beta}}\langle\partial_{i,\ell} \bar{H}\rangle_{\bar{H}_{\beta}}|&\leq \langle|w_{j}w_{k}\partial^2 \loss_{i:\rho}((Aw)_{i})|\rangle_{\bar{H}_{\beta}}\langle|w_{\ell}\partial\loss_{i:\rho}((Aw)_{i})|\rangle_{\bar{H}_{\beta}} \\
    &\leq L_{n}^{3}\langle|\partial^2 \loss_{i:\rho}((Aw)_{i})|\rangle_{\bar{H}_{\beta}}\langle|\partial\loss_{i:\rho}((Aw)_{i})|\rangle_{\bar{H}_{\beta}} \\
    &\leq L_{n}^{3}D^{2}_{\loss}(\rho)\langle1+|\loss_{i:\rho}((Aw)_{i})|^{q_{2}}\rangle_{\bar{H}_{\beta}}\langle1+|\loss_{i:\rho}((Aw)_{i})|^{q_{1}}\rangle_{\bar{H}_{\beta}} \\
    &\lesssim  L_{n}^{3}D^{2}_{\loss}(\rho)\langle1+|\loss_{i:\rho}((Aw)_{i})|^{q_{1}+q_{2}}\}\rangle_{\bar{H}_{\beta}} \\
    &\lesssim L_{n}^{3}D^{3}_{\loss}(\rho)(1+\langle\bar{H}^{(i)}(w,A)^{q_{1}+q_{2}}\rangle_{\bar{H}_{\beta}}), \\
    |\langle\partial_{i,j}\partial_{i,\ell} \bar{H}\partial_{i,k} \bar{H}\rangle_{\bar{H}_{\beta}}|&\leq |w_{j}w_{\ell}w_{k}|\langle|\partial\loss^2_{i:\rho}((Aw)_{i})||\partial\loss_{i:\rho}((Aw)_{i})|\rangle_{\bar{H}_{\beta}} \\
  &\leq L^3_{n}D^{2}_{\loss}(\rho)\langle(1+|\loss_{i:\rho}((Aw)_{i})|^{q_{2}})(1+|\loss_{i:\rho}((Aw)_{i})|^{q_{1}})\rangle_{\bar{H}_{\beta}} \\
 &\lesssim  L_{n}^{3}D^{2}_{\loss}(\rho)\langle1+|\loss_{i:\rho}((Aw)_{i})|^{q_{1}+q_{2}}\}\rangle_{\bar{H}_{\beta}} \\
    &\lesssim L_{n}^{3}D^{3}_{\loss}(\rho)(1+\langle\bar{H}^{(i)}(w,A)^{q_{1}+q_{2}}\rangle_{\bar{H}_{\beta}}), \\
  |\langle\partial_{i,j}\partial_{i,k}\partial_{i,\ell} \bar{H}\rangle_{\bar{H}_{\beta}}|&\leq |w_{j}||w_{k}||w_{\ell}|\langle|\partial^{3} \loss_{i:\rho} ((Aw)_{i})|\rangle_{\bar{H}_{\beta}} \\
  &\leq L_{n}^{3}D_{\loss}(\rho)\langle1+|\loss_{i:\rho} ((Aw)_{i})|^{q_{3}}\rangle_{\bar{H}_{\beta}} \\
  &\leq L_{n}^{3}D_{\loss}(\rho)(1+\langle\bar{H}^{(i)}(w,A)^{q_{3}}\rangle_{\bar{H}_{\beta}}.
\end{align*}
Hence, for $\beta\geq1$, we have the following upper bound for the $\partial_{i,j}\partial_{i,k}\partial_{i,\ell}g(n^{-1}F_{\beta}(A))$:
\begin{align*}
    |\partial_{i,j}\partial_{i,k}\partial_{i,\ell}g(n^{-1}F_{\beta}(A))|\lesssim K_{g}n^{-1}\beta^{2}L_{n}^{3}D^{3}_{\loss}(\rho)\left[1+\langle\bar{H}^{(i)}(w,A)^{\mathbf{q}}\rangle_{\bar{H}_{\beta}}\right],
\end{align*}
where we define $\mathbf{q}=\max\{3q_{1},2q_{1}+2q_{2},q_{3}\}$. So, for any $j$ and $\ell,\ell^{\prime},\ell^{\prime\prime}\in B_j$, we obtain
\begin{align*}
   &E[| \partial_{i,\ell}\partial_{i,\ell^{\prime}}\partial_{i,\ell^{\prime\prime}}g(n^{-1}F_{\beta}(\mathbf{Z}_{i,j}^{0}+tX^{(\set_{j})}_{i}))||X^{(\set_{j})}_{i}] \\
   \lesssim &K_{g}n^{-1}\beta^{2}L_{n}^{3}D^{3}_{\loss}(\rho)\left[1+E[\langle(\bar{H}^{(i)}(w,\mathbf{Z}_{i,j}^{0}+tX^{(\set_{j})}_{i}))^{\mathbf{q}}\rangle_{\bar{H}_{\beta}}|X^{(\set_{j})}_{i}]\right].
\end{align*}
By using Chebyshev's association inequality, we have
\begin{equation*}
    E[\langle(\bar{H}^{(i)}(w,\mathbf{Z}_{i,j}^{0}+tX^{(\set_{j})}_{i}))^{\mathbf{q}}\rangle_{\bar{H}_{\beta}}|X^{(\set_{j})}_{i}]\leq E[\langle(\bar{H}^{(i)}(w,\mathbf{Z}_{i,j}^{0}+tX^{(\set_{j})}_{i}))^{\mathbf{q}}\rangle_{\bar{H}_{\beta;-i}}|X^{(\set_{j})}_{i}], 
\end{equation*}
where we define $H_{\beta;-i}:=\beta (\bar{H}(w,C)-\bar{H}^{(i)}(w,C))$ and $C$ is a $n\times d$ matrix
. Therefore, we have
\begin{equation}
\begin{split}
     &E[| \partial_{i,\ell}\partial_{i,\ell^{\prime}}\partial_{i,\ell^{\prime\prime}}g(n^{-1}F_{\beta}(\mathbf{Z}_{i,j}^{0}+tX^{(\set_{j})}_{i}))||X^{(\set_{j})}_{i}] \\
   & \lesssim K_{g}n^{-1}\beta^{2}L_{n}^{3}D^{3}_{\loss}(\rho)\left[1+E[\langle(\loss_{i;\rho}(((\mathbf{Z}_{i,j}^{0}+tX^{(\set_{j})}_{i})^{\top}w)_{i}))^{\mathbf{q}}\rangle_{\bar{H}_{\beta;-i}}|X^{(\set_{j})}_{i}]\right].
\end{split}
\label{upperbound:g-phi_rho}
\end{equation}

In the second step, we derive an upper bound for the moments of the random variables by leveraging the properties of the loss function. Assumption \ref{ass::loss} (1) and (2) yield
\begin{equation}
    \begin{split}
    &E[\langle(\loss_{i;\rho}(((\mathbf{Z}_{i,j}^{0}+tX^{(\set_{j})}_{i})^{\top}w)_{i}))^{\mathbf{q}}\rangle_{\bar{H}_{\beta;-i}}|X^{(\set_{j})}_{i}] \\
    &\leq E[\langle(\|\loss_{i;\rho}(((\mathbf{Z}_{i,j}^{0}+tX^{(\set_{j})}_{i})^{\top}w)_{i})-\loss_{i}(((\mathbf{Z}_{i,j}^{0}+tX^{(\set_{j})}_{i})^{\top}w)_{i})\|_{\infty} \\
    & \quad +\loss_{i}(((\mathbf{Z}_{i,j}^{0}+tX^{(\set_{j})}_{i})^{\top}w)_{i}))^{\mathbf{q}}\rangle_{\bar{H}_{\beta;-i}}|X^{(\set_{j})}_{i}] \\
    &\lesssim_{q}(1+E[\langle(\loss_{i}(((\mathbf{Z}_{i,j}^{0}+tX^{(\set_{j})}_{i})^{\top}w)_{i}))^{\mathbf{q}}\rangle_{\bar{H}_{\beta;-i}}|X^{(\set_{j})}_{i}])  \\
    &\lesssim_{q}(1+E[\langle L^{\mathbf{q}}_{\loss_{i}}(1+|((\mathbf{Z}_{i,j}^{0}+tX^{(\set_{j})}_{i})^{\top}w)_{i}|^{\mathbf{q}_{0}})\rangle_{\bar{H}_{\beta;-i}}|X^{(\set_{j})}_{i}])  \\
    &\lesssim_{q} L^{\mathbf{q}}_{\loss_{i}}\left(1+E\left[\langle| ((\mathbf{Z}_{i,j}^{0})^{\top}w)_{i}|^{\mathbf{q}_{0}} \rangle_{\bar{H}_{\beta;-i}}|X^{(\set_{j})}_{i}\right]+E\left[\langle |t(X^{(\set_{j})}_{i})^{\top}w|^{\mathbf{q}_{0}}\rangle_{\bar{H}_{\beta;-i}}|X^{(\set_{j})}_{i}\right]\right),
    \end{split}
    \label{upperbound:phi_rho-XandZ}
\end{equation}
where we define $\mathbf{q}_{0}=q_{0}\mathbf{q}$ and $\bar{\mathbf{q}}_{0}:=2\lceil\mathbf{q}_{0}/2\rceil$, respectively.
By trivial calculation, for any $1\leq j\leq k$ we have
\begin{equation}
    |t(X^{(\set_{j})}_{i})^{\top}w|^{\mathbf{q}_{0}}\leq t^{\mathbf{q}_{0}}L^{\mathbf{q}_{0}}_{n}\left(\sum_{m\in \set_j}|X_{i, m}|\right)^{\mathbf{q}_{0}}.\label{upperbound:X_cond_X}
\end{equation}
Moreover, let $C^{j}_{i}$ denote the $i$-th row of $\mathbf{Z}_{i,j}^{0}$. Then, we have
\begin{equation}
    \begin{split}
         E\left[\langle| ((\mathbf{Z}_{i,j}^{0})^{\top}w)_{i}|^{\bar{\mathbf{q}}_{0}} \rangle_{\bar{H}_{\beta;-i}}|X^{(\set_{j})}_{i}\right] 
    \leq & E\left[\sum_{k_{1},\cdots,k_{\bar{\mathbf{q}}_{0}}\notin \set_{j}}C^{j}_{ik_{1}}\cdots C^{j}_{ik_{\mathbf{q}_{0}}}\langle w_{k_{1}}\cdots w_{k_{\bar{\mathbf{q}}_{0}}} \rangle_{\bar{H}_{\beta;-i}}\right] \\
    \leq &\sum_{k_{1},\cdots,k_{\bar{\mathbf{q}}_{0}}\notin \set_{j}} E\left[C^{j}_{ik_{1}}\cdots C^{j}_{ik_{\bar{\mathbf{q}}_{0}}}\right]E\left[\langle w_{k_{1}}\cdots w_{k_{\bar{\mathbf{q}}_{0}}} \rangle_{\bar{H}_{\beta;-i}}\right]\\
    \leq &L_{n}^{\bar{\mathbf{q}}_{0}}\sum_{k_{1},\cdots,k_{\bar{\mathbf{q}}_{0}}\notin \set_{j}} E\left[C^{j}_{ik_{1}}\cdots C^{j}_{ik_{\bar{\mathbf{q}}_{0}}}\right],
    \end{split}
    \label{upperbound:Z_cond_X}
\end{equation}
where the second inequality follows from i.i.d. property of $X_{i}$ and $W_{i}$.  Therefore, from \eqref{upperbound:g-phi_rho} to \eqref{upperbound:Z_cond_X}, we have 
\begin{equation}\label{upperbound:final}
\begin{split}
       &E[| \partial_{i,\ell}\partial_{i,\ell^{\prime}}\partial_{i,\ell^{\prime\prime}}g(n^{-1}F_{\beta}(\mathbf{Z}_{i,j}^{0}+tX^{(\set_{j})}_{i}))||X^{(\set_{j})}_{i}] \\
     &\lesssim K_{g}n^{-1}\beta^{2}L_{n}^{3}D^{3}_{\loss}(\rho)\left[1+L^{\mathbf{q}}_{\loss_{i}}L_{n}^{\bar{\mathbf{q}}_{0}}\left(1+\sum_{k_{1},\cdots,k_{\bar{\mathbf{q}}_{0}}\notin \set_{j}} E\left[C^{j}_{ik_{1}}\cdots C^{j}_{ik_{\bar{\mathbf{q}}_{0}}}\right]+t^{\bar{\mathbf{q}}_{0}}\left(\sum_{m\in \set_j}|X_{i, m}|\right)^{\bar{\mathbf{q}}_{0}}\right)\right].
\end{split}    
\end{equation}
The second term expresses the sum of all combinations of $\mathbf{{\bar{q}}}_{0}$ random variables exerted from the $i$-th row of $\mathbf{Z}_{i,j}^{0}$ except those in $\set_{j}$. Combining Lemma \ref{Lipschitz_higherorder} with \eqref{upperbound:final}, we attain 
\begin{align*}
&\left|E\left[\int^{1}_{0}(1-t)^2\sum_{\ell \in B_j} \sum_{\ell' \in B_j} \sum_{\ell'' \in B_j} X_{i,\ell} X_{i,\ell'} X_{i,\ell''}\partial_{i, \ell}\partial_{ i, \ell'} \partial_{ i, \ell''}g(n^{-1}F_{\beta}(\mathbf{Z}_{i,j}^{0}+tX^{(\set_{j})}_{i}))dt\right]\right| \\
&\leq E\left[\int^{1}_{0}(1-t)^2\left(\sum_{\ell\in B_j}\left|X_{i,\ell}\right|\right)^{3}E\left[\left| \partial_{i,(\set_{j})_{\ell}}\partial_{i,(\set_{j})_{\ell^{\prime}}}\partial_{i,(\set_{j})_{\ell^{\prime\prime}}}g(n^{-1}F_{\beta}(\mathbf{Z}_{i,j}^{0}+tX^{(\set_{j})}_{i}))\right|X^{(\set_{j})}_{i}\right] dt\right] \\
&\lesssim K_{g}n^{-1}\beta^{2}L_{n}^{3}D^{3}_{\loss}(\rho)E\left[\left(\sum_{\ell\in B_j}\left|X_{i,\ell}\right|\right)^{3}\right] \\
&+K_{g}n^{-1}\beta^{2}L_{n}^{\bar{\mathbf{q}}_{0}+3}D^{3}_{\loss}(\rho)L^{\mathbf{q}}_{\loss_{i}}
E\left[\left(\sum_{\ell\in B_j}\left|X_{i,\ell}\right|\right)^{3}\left(1+\left(\sum_{\ell\in B_j}\left|X_{i,\ell}\right|\right)^{\bar{\mathbf{q}}_0}+\sum_{k_{1},\cdots,k_{\bar{\mathbf{q}}_{0}}\notin \set_{j}} E\left[C^{j}_{ik_{1}}\cdots C^{j}_{ik_{\bar{\mathbf{q}}_{0}}}\right]\right)\right].
\end{align*}
Therefore, the required result holds.
\end{proof}

\subsection{Proof of Lemma \ref{general_inequality_dependent}}
\begin{proof}
Let $C^{j}_{i}$ denote the $i$-th row of $\mathbf{Z}_{i,j}^{0}$. By definition, there exists $\alpha\in\mathbb{N}$ such that $\bar{\mathbf{q}}_{0}=2\alpha$.  Throughout this proof, we assume $\bar{\mathbf{q}}_{0}\leq p$. The summation, $\sum_{k_{1},\cdots,k_{\bar{\mathbf{q}}_{0}}\notin \set_{j}} $, expresses the sum of all combinations of random variables $\mathbf{{\bar{q}}}_{0}$ with repetition, exerted from the $i$-th row of $\mathbf{Z}_{i,j}^{0}$ except those in $\set_{j}$. Let $\ell$ denote the number of different dependent index cells in which random variables are.

We divide this summation into three parts based on $\ell$, i.e. (i)$1\leq\ell\leq\bar{\mathbf{q}}_{0}-\alpha-1$, (ii) $\ell=\bar{\mathbf{q}}_{0}-\alpha$  and (iii) $(\bar{\mathbf{q}}_{0}+1)-\alpha\leq \ell\leq \bar{\mathbf{q}}_{0}$, and denote corresponding summation as $A_{1}$, $A_{2}$ and $A_{3}$, respectively.
\begin{equation*}
     \sum_{k_{1},\cdots,k_{\bar{\mathbf{q}}_{0}}\notin \set_{j}} E\left[C^{j}_{ik_{1}}\cdots C^{j}_{ik_{\bar{\mathbf{q}}_{0}}}\right]=A_{1}+A_{2}+A_{3}.
\end{equation*}

We consider three cases above and derive upper bounds for them.
\begin{enumerate}
    \item[(i)] First, we consider the number of terms included in this case. After deriving the uniform upper bound of expectations, we can upper bound $A_{1}$ by the multiplication of the number of terms and the uniform bound. 

    Let $N_{1}$ denote the number of terms included in this case. To derive the number of terms included in this cell, we consider the following steps:
    \begin{enumerate}
        \item[Step 1.] Fix $\ell$ and pick up a specific $k_{1}$ from $[p]$ except elements in $\set_{j}$. The possible ways to select $C^{j}_{ik_{1}}$ is $p-|\set_{j}|$. Let $\set^{*}_{1,\ell}$ denote the dependent index cell of $C^{j}_{ik_{1}}$.
        \item[Step 2.] Set $\gamma_{1,\ell}$ as the number of variables included in the same dependent index cell as $C^{j}_{ik_{1}}$. Because the maximum cardinality of dependent index cells is $d$, the possible ways to select variables is $d^{(\gamma_{1,\ell})-1}$. Denote those variables as $C^{j}_{i,k_{2}}, \cdots, C^{j}_{i,k_{\gamma_{1,\ell}}}$. 
        \item[Step 3.] Pick up a specific $k_{\gamma_{1,\ell}+1}$ from $[p]$ except elements in $\set_{j}$ and $\set^{*}_{1,\ell}$. The possible ways to select $C^{j}_{ik_{(\gamma_{1,\ell})+1}}$ is $p-(|\set_{j}|+|\set^{*}_{1,\ell}|)$. Let $\set^{*}_{2,\ell}$ denote the dependent index cell of $C^{j}_{ik_{(\gamma_{1,\ell})+1}}$. 
        \item[Step 4.] Set $\gamma_{2,\ell}$ as the number of variables included in the same dependent index cell as $C^{j}_{ik_{(\gamma_{1,\ell})+1}}$. As in Step 2, the possible ways to select variables are $d^{(\gamma_{2,\ell})-1}$. Denote those variables as $C^{j}_{ik_{(\gamma_{1,\ell})+2}}, \cdots, C^{j}_{ik_{(\gamma_{1,\ell})+(\gamma_{2,\ell})}}$. 
        \item[Step 5.] Iterate this process until we define $\set^{*}_{\ell,\ell}$ and $\gamma_{\ell,\ell}$
    \end{enumerate}
    Because the possible ways to choose $\gamma_{1,\ell},\cdots,\gamma_{\ell,\ell}$ only depends on $q$, we have 
    \begin{align*}
        N_{1}&\lesssim_{q}\sum_{\ell=1}^{\bar{\mathbf{q}}_{0}-\alpha-1}p^{\ell}d^{\sum_{\ell^{*}=1}^{\ell}(\gamma_{\ell^{*},\ell}-1)} =\sum_{\ell=1}^{\bar{\mathbf{q}}_{0}-\alpha-1}p^{\ell}d^{\bar{\mathbf{q}}_{0}-\ell},
    \end{align*}
    where the last equality follows from the definition of $\gamma_{\ell^{*},\ell}$, i.e. $\sum_{\ell^{*}=1}^{\ell}\gamma_{\ell^{*},\ell}=\bar{\mathbf{q}}_{0}$. Moreover, because $d\leq p$ holds, for any $1\leq \ell \leq \bar{\mathbf{q}}_{0}-\alpha-2$, we have
    \begin{equation*}
        p^{\ell}d^{\bar{\mathbf{q}}_{0}-\ell} \leq p^{\bar{\mathbf{q}}_{0}/2-1}d^{\bar{\mathbf{q}}_{0}/2+1}. 
    \end{equation*}
    Therefore, we have 
    \begin{equation*}
        N_{1}\lesssim_{q}p^{\bar{\mathbf{q}}_{0}/2-1}d^{\bar{\mathbf{q}}_{0}/2+1}. 
    \end{equation*}
    Apparently, the upper bound of the uniform expectations is $\max_{k_{1},\cdots,k_{\bar{\mathbf{q}}_{0}}\in[p]^{\bar{\mathbf{q}}_{0}}}E\left[C^{j}_{ik_{1}}\cdots C^{j}_{ik_{\bar{\mathbf{q}}_{0}}}\right]$. Therefore, we have
    \begin{equation*}
        A_{1}\lesssim_{q} p^{\bar{\mathbf{q}}_{0}/2-1}d^{\bar{\mathbf{q}}_{0}/2+1}\max_{k_{1},\cdots,k_{\bar{\mathbf{q}}_{0}}\in[p]^{\bar{\mathbf{q}}_{0}}}E\left[C^{j}_{ik_{1}}\cdots C^{j}_{ik_{\bar{\mathbf{q}}_{0}}}\right].
    \end{equation*}
    
    \item[(ii)] When there exists at least one dependent index cell that has only one element, that term becomes zero due to the expectation of that element. From the definition of $\ell$, there is no dependent index cell where more than two random variables belong. Thus, the expectation can be divided into pairs of random variables. Moreover, we only have ${}_{\bar{\mathbf{q}}_{0}} \rm C_{2}$ patters of pairs belonging to the same dependent cell from $C^{j}_{ik_{1}}\cdots C^{j}_{ik_{\bar{\mathbf{q}}_{0}}}$. Hence,  to derive the upper bound, all we have to do is to consider the case when $k_{t}$ and $k_{t+1}$ are in the same dependent index cell where $t$ is odd, i.e.
    \begin{align*}
         &\sum_{k_{1},\cdots,k_{\bar{\mathbf{q}}_{0}}\notin \set_{j}} E\left[C^{j}_{ik_{1}}\cdots C^{j}_{ik_{\bar{\mathbf{q}}_{0}}}\right] \\
         &=\sum_{\substack{\ell_{1}=1 \\ \ell_{1}\neq j}}^{k}\sum_{\ell_{2}\neq \ell_{1}, j}^{k}\cdots \sum_{\ell_{\alpha}\neq \ell_{1}, \ell_{2},\cdots, \ell_{\alpha-1}, j}^{k} \sum_{\{k_{1},k_{2}\}\in \set_{\ell_{1}}}\cdots\sum_{\{k_{\bar{\mathbf{q}}_{0}-1},k_{\bar{\mathbf{q}}_{0}}\}\in \set_{\ell_{\alpha}}}E\left[C^{j}_{ik_{1}}\cdots C^{j}_{ik_{\bar{\mathbf{q}}_{0}}}\right].
    \end{align*}
By definition of the dependent structure, we have
\begin{align*}
    &\sum_{\substack{\ell_{1}=1 \\ \ell_{1}\neq j}}^{k}\sum_{\ell_{2}\neq \ell_{1}, j}\cdots \sum_{\ell_{\alpha}\neq \ell_{1}, \ell_{2},\cdots, \ell_{\alpha-1}, j} \sum_{\{k_{1},k_{2}\}\in \set_{\ell_{1}}}\cdots\sum_{\{k_{\bar{\mathbf{q}}_{0}-1},k_{\bar{\mathbf{q}}_{0}}\}\in \set_{\ell_{\alpha}}}E\left[C^{j}_{ik_{1}}\cdots C^{j}_{ik_{\bar{\mathbf{q}}_{0}}}\right]  \\ 
    &=\sum_{\substack{\ell_{1}=1 \\ \ell_{1}\neq j}}^{k}\sum_{\ell_{2}\neq \ell_{1}, j}\cdots \sum_{\ell_{\alpha}\neq \ell_{1}, \ell_{2},\cdots, \ell_{\alpha-1}, j} \sum_{\{k_{1},k_{2}\}\in \set_{\ell_{1}}}\cdots\sum_{\{k_{\bar{\mathbf{q}}_{0}-1},k_{\bar{\mathbf{q}}_{0}}\}\in \set_{\ell_{\alpha}}}E\left[C^{j}_{ik_{1}}C^{j}_{ik_{2}}\right]\cdots E\left[C^{j}_{ik_{\bar{\mathbf{q}}_{0}-1}}C^{j}_{ik_{\bar{\mathbf{q}}_{0}}}\right] \\ 
     &=\left(\frac{1}{n}\right)^{\alpha} \sum_{\substack{\ell_{1}=1 \\ \ell_{1}\neq j}}^{k}\sum_{\ell_{2}\neq \ell_{1}, j}\cdots \sum_{\ell_{\alpha}\neq \ell_{1}, \ell_{2},\cdots, \ell_{\alpha-1}, j} \boldsymbol{\iota}_{|\set_{\ell_{1}}|}^{\top}\Sigma_{|\set_{\ell_{1}}|}\boldsymbol{\iota}_{|\set_{\ell_{1}}|}\boldsymbol{\iota}_{|\set_{\ell_{2}}|}^{\top}\Sigma_{|\set_{\ell_{2}}|}\boldsymbol{\iota}_{|\set_{\ell_{2}}|}\cdots \boldsymbol{\iota}_{|\set_{\ell_{\alpha}}|}^{\top}\Sigma_{|\set_{\ell_{\alpha}}|}\boldsymbol{\iota}_{|\set_{\ell_{\alpha}}|} \\
     &\leq \left(\frac{1}{n}\right)^{\alpha} (\max_{1\leq i\leq k}\lambda_{\max}(\Sigma_{|\set_{i}|}))^{\alpha}\sum_{\substack{\ell_{1}=1 \\ \ell_{1}\neq j}}^{k}\sum_{\ell_{2}\neq \ell_{1}, j}\cdots \sum_{\ell_{\alpha}\neq \ell_{1}, \ell_{2},\cdots, \ell_{\alpha-1}, j} |\set_{\ell_{1}}|\cdots |\set_{\ell_{\alpha}}| \\
     &\leq \left(\frac{p}{n}\right)^{\alpha} (\max_{1\leq i\leq k}\lambda_{\max}(\Sigma_{|\set_{i}|}))^{\alpha},
    \end{align*}
    where we define $\Sigma_{|\set_{i}|}$ as a variance-covariance matrix of the cell $\set_{i}$. The last inequality is valid from the definition of the partition. Let $\boldsymbol{\iota}_{|\set_{i}|}$ denote a $|\set_{i}|\times 1$ vector of ones. Therefore, we obtain
    \begin{equation*}
        A_{2}\lesssim_{q,\tau} (\max_{1\leq i\leq k}\lambda_{\max}(\Sigma_{|\set_{i}|}))^{\bar{\mathbf{q}}_{0}/2}.
    \end{equation*}
    \item[(iii)] In this case, at least one cell consists of just one element of $C^{j}_{i1},\cdots, C^{j}_{i\bar{\mathbf{q}}_{0}}$, which implies we have $E[C^{j}_{ik_{1}}\cdots C^{j}_{i\bar{\mathbf{q}}_{0}}]=0$. Therefore, $A_{3}$ is equal to 0.
\end{enumerate}

In summary, the discussion above yields
\begin{align*}
&\sum_{k_{1},\cdots,k_{\bar{\mathbf{q}}_{0}}\notin \set_{j}} E\left[C^{j}_{ik_{1}}\cdots C^{j}_{ik_{\bar{\mathbf{q}}_{0}}}\right] \\
&\lesssim_{q,\tau} (\max_{1\leq i\leq k}\lambda_{\max}(\Sigma_{|\set_{i}|}))^{\bar{\mathbf{q}}_{0}/2}+
    p^{\bar{\mathbf{q}}_{0}/2-1}d^{\bar{\mathbf{q}}_{0}/2+1}\max_{k_{1},\cdots,k_{\bar{\mathbf{q}}_{0}}\in[p]^{\bar{\mathbf{q}}_{0}}}E\left[C^{j}_{ik_{1}}\cdots C^{j}_{ik_{\bar{\mathbf{q}}_{0}}}\right].
\end{align*}
As in the proof of Proposition \ref{step4}, we have
   \begin{equation*}        \max_{k_{1},\cdots,k_{\bar{\mathbf{q}}_{0}}\in[p]^{\bar{\mathbf{q}}_{0}}}E\left[C^{j}_{ik_{1}}\cdots C^{j}_{ik_{\bar{\mathbf{q}}_{0}}}\right] \leq n^{-\bar{\mathbf{q}}_{0}/2}\max\{\max_{1\leq k\leq p}(\|X_{i,k}\|_{2^{(\bar{\mathbf{q}}_{0}/2)}})^{(\bar{\mathbf{q}}_{0})}, \max_{1\leq k\leq p}(\|W_{i,k}\|_{2^{(\bar{\mathbf{q}}_{0}/2)}})^{(\bar{\mathbf{q}}_{0})}\}.
    \end{equation*}
    Therefore, the required result holds.
 \end{proof}

\subsection{Proof of Lemma \ref{Lemma_A.1}}

\begin{proof}[Proof of Lemma \ref{Lemma_A.1}]

The required result immediately holds by  Lemma A.1 of \citet{thrampoulidis2018precise} with replacement of $\|\cdot\|$ by $\|\cdot\|_{\Sigma}$. 
\end{proof}

\subsection{Proof of Lemma \ref{Lemma_A.2}}

\begin{proof}[Proof of Lemma \ref{Lemma_A.2}]
We can prove Lemma \ref{Lemma_A.2} by following the proof of  Lemma A.2 of \citet{thrampoulidis2018precise} by using $\mathbf{A}\Sigma^{1/2}$ instead of $\mathbf{A}$.
\end{proof}

\subsection{Proof of Lemma \ref{Lemma_A.3}}

\begin{proof}[Proof of Lemma \ref{Lemma_A.3}]
This naturally follows from the application of Theorem 4 in \citet{akhtiamov2024novel}.
\end{proof}

\subsection{Proof of Lemma \ref{Lemma_A.4}}

\begin{proof}[Proof of Lemma \ref{Lemma_A.4}]

(i): First, we consider the optimization in $\mathbf{u}$ of \eqref{TAH_71}. Using the fact that $\max_{\|\mathbf{u}\|_{2}=\beta}\mathbf{u}^{\top}\mathbf{t}=\beta\|\mathbf{t}\|_{2}$ holds for all $\beta\geq0$, we obtain
\begin{equation*}
\max_{0\leq \beta\leq K_{\beta}, \mathbf{s}}\min_{\|w\|_{\Sigma}\leq K_{\alpha}, \mathbf{v}}\frac{\beta}{\sqrt{p}}\|\|w\|_{\Sigma}\mathbf{g}+\xi-\mathbf{v}\|_{2}-\frac{\beta}{\sqrt{p}}\mathbf{h}^{\top}\Sigma^{1/2}w+\frac{1}{p}\mathcal{L}(\mathbf{v})+\frac{\lambda}{p} \mathbf{s}^{\top}\theta_0+\frac{\lambda}{ \sqrt{p}}\mathbf{s}^{\top}w-\frac{\lambda}{p} f^{*}(\mathbf{s}).
\end{equation*}

Second, we consider the optimization over $w$. Fixing $\|w\|_{\Sigma}$ as $\alpha$ and using a similar argument to the above discussion, we get
\begin{equation}\label{TAH_87}
\max_{0\leq \beta\leq K_{\beta}, \mathbf{s}}\min_{0\leq \alpha \leq K_{\alpha}, \mathbf{v}}\frac{\beta}{\sqrt{p}}\|\alpha\mathbf{g}+\xi-\mathbf{v}\|_{2}+\frac{1}{p}\mathcal{L}(\mathbf{v})-\frac{\alpha}{\sqrt{p}}\|\beta\mathbf{h}-\lambda\Sigma^{-1/2}\mathbf{s}\|_{2}+\frac{\lambda}{p} \mathbf{s}^{\top}\theta_0-\frac{\lambda}{p} f^{*}(\mathbf{s}).
\end{equation}
Let $M_{\Sigma^{-1/2}}(\alpha,\beta,\mathbf{v},\mathbf{s})$ denote the objective function of \eqref{TAH_87}.  $M_{I}(\alpha,\beta,\mathbf{v},\mathbf{s})$ becomes equal to the objective function (87) in \citet{thrampoulidis2018precise}. By definition of $M_{\Sigma^{-1/2}}$, we have
\begin{equation*}
    M_{\Sigma^{-1/2}}(\alpha,\beta,\mathbf{v},\mathbf{s})=M_{I}(L(\alpha,\beta,\mathbf{v},\mathbf{s})^{\top}),
\end{equation*}
where we define
\begin{equation*}
L=\begin{pmatrix}
   1 & 0 & \mathbf{0}_{dim(\mathbf{s})} & \mathbf{0}_{dim(\mathbf{s})} \\
     0 & 1 & \mathbf{0}_{dim(\mathbf{s})} & \mathbf{0}_{dim(\mathbf{s})} \\
       0 & 0 & \mathbf{I}_{dim(\mathbf{s})} & \mathbf{0}_{dim(\mathbf{s})} \\
         0 & 0 & \mathbf{0}_{dim(\mathbf{s})} & \Sigma^{-1/2} \\
\end{pmatrix}.
\end{equation*}
Because $M_{I}$ is jointly convex in $(\alpha,\mathbf{v})$ and jointly concave in $(\beta,\mathbf{s})$,  $\min_{\mathbf{v}}M_{\Sigma^{-1/2}}(\alpha,\beta,\mathbf{v},\mathbf{s})$ is convex in $\alpha$ and jointly concave in $(\beta,\mathbf{s})$. Then, from Corollary 3.3 of \citet{sion1958general}, we can flip the order of $\max_{\beta,\mathbf{s}}$ and $\min_{\alpha}$, i.e. \eqref{TAH_87} can be reduced to 
\begin{equation*}
\min_{0\leq \alpha \leq K_{\alpha}}\max_{0\leq \beta\leq K_{\beta}}\max_{\mathbf{s}}\min_{\mathbf{v}}M_{\Sigma^{-1/2}}(\alpha,\beta,\mathbf{v},\mathbf{s}).
\end{equation*}
Moreover, because $M_{\Sigma^{-1/2}}(\alpha,\beta,\mathbf{s},\mathbf{v})$ is separable in $\mathbf{s}$ and $\mathbf{v}$, we can flip the order of minimization and maximization of $\mathbf{v}$ and $\mathbf{s}$. 

Furthermore, we simplify the optimization problem through the square-root trick. We apply the fact that $\sqrt{\chi}=\inf_{\tau>0}\{\frac{\tau}{2}+\frac{\chi}{2\tau}\}$ to both terms $\frac{1}{\sqrt{p}}\|\alpha\mathbf{g}+\xi-\mathbf{v}\|_{2}$ and $\frac{1}{\sqrt{p}}\|\beta\mathbf{h}-\lambda\Sigma^{-1/2}\mathbf{s}\|_{2}$.:
\begin{equation}\label{TAH_88}
\begin{split}
   \min_{0\leq \alpha \leq K_{\alpha}} \max_{0\leq \beta\leq K_{\beta}}\inf_{\tau_g>0}\sup_{\tau_h>0}\frac{\beta\tau_g}{2}+&\frac{1}{p}\min_{\mathbf{v}}\left\{\frac{\beta}{2\tau_g}\|\alpha\mathbf{g}+\xi-\mathbf{v}\|^2_{2}+\mathcal{L}(\mathbf{v})\right\} \\
   -&\frac{\alpha \tau_h}{2}-\frac{1}{p}\min_{\mathbf{s}}\left\{\frac{\alpha}{2\tau_h}\|\beta\mathbf{h}-\lambda\Sigma^{-1/2}\mathbf{s}\|^2_{2}-\lambda \mathbf{s}^{\top}\theta_0+\lambda f^{*}(\mathbf{s})\right\}.
\end{split}
\end{equation}

Finally, we summarize the objective function and optimization problem through the use of Moreau envelope. Because the difference in \eqref{TAH_88} and (88) in \citet{thrampoulidis2018precise} does not depend on $\beta$ and $\tau_g$, we can exchange the order of optimization in $\beta$ and $\tau_g$. Moreover, it follows from Assumptions \ref{ass::moment} and \ref{ass::eigen} that
\begin{align*}
    &\min_{\mathbf{s}}\left\{\frac{\alpha}{2\tau_h}\|\beta\mathbf{h}-\lambda\Sigma^{-1/2}\mathbf{s}\|^2_{2}-\lambda \mathbf{s}^{\top}\theta_0+\lambda f^{*}(\mathbf{s})\right\} \\
    &=\min_{\mathbf{s}}\left\{\frac{\alpha}{2\tau_h}\|\beta\mathbf{h}-\lambda\mathbf{s}\|^2_{2}-\lambda \mathbf{s}^{\top}\Sigma^{1/2}\theta_0+\lambda f^{*}(\Sigma^{1/2}\mathbf{s})\right\}.
\end{align*}
Let $g(x)$ denote $f(\Sigma^{-1/2}x)$. Then, we get
\begin{align*}
    f^{*}(\Sigma^{1/2}\mathbf{s})=&\max_{\mathbf{x}}\mathbf{s}^{\top}\Sigma^{1/2}\mathbf{x}-f(\mathbf{x}) \\
    =&\max_{\mathbf{x}}\mathbf{s}^{\top}\Sigma^{1/2}\mathbf{x}-g(\Sigma^{1/2} \mathbf{x}) \\
    =&\max_{\mathbf{x}}\mathbf{s}^{\top}\mathbf{x}-g(\mathbf{x}) \\
    =&g^{*}(\mathbf{s}).
\end{align*}
Therefore, as in \citet{thrampoulidis2018precise}, we have
\begin{align}
    &\min_{\mathbf{s}}\left\{\frac{\alpha}{2\tau_h}\|\beta\mathbf{h}-\lambda\Sigma^{-1/2}\mathbf{s}\|^2_{2}-\lambda \mathbf{s}^{\top}\theta_0+\lambda f^{*}(\mathbf{s})\right\}\\
    &=\min_{\mathbf{s}}\left\{\frac{\alpha}{2\tau_h}\|\beta\mathbf{h}-\lambda\mathbf{s}\|^2_{2}-\lambda \mathbf{s}^{\top}\Sigma^{1/2}\theta_0+\lambda g^{*}(\mathbf{s})\right\} \\
    &=-\frac{\tau_h}{2\alpha}\|\Sigma^{1/2}\theta_0\|^{2}_{2}-\beta\mathbf{h}^{\top}\Sigma^{1/2}\theta_0+\lambda e_{g^{*}}\left(\frac{\beta}{\lambda}\mathbf{h}+\frac{\tau_h}{\alpha \lambda}\Sigma^{1/2}\theta_0;\frac{\tau_h}{\alpha\lambda}\right) \label{TAH_89} \\
    &=\frac{\beta^{2}\alpha}{2\tau_h}\|\mathbf{h}\|^{2}_{2}-\lambda e_{g}\left(\frac{\beta\alpha}{\tau_h}\mathbf{h}+\Sigma^{1/2}\theta_0;\frac{\alpha\lambda}{\tau_h}\right). \label{TAH_90}
\end{align}

(ii) From the property of Moreau envelope, the continuity follows. Especially, at $\alpha=0$, as per Theorem 10.3 of \citet{rockafellar1997convex}, the limit of the RHS in \eqref{TAH_90} converges to $-\lambda g(\Sigma^{1/2}\theta_0)$ equal to $-\lambda f(\theta_0)$.

(iii) We can directly apply the result of Lemma B.4 in \citet{thrampoulidis2018precise} to our setting because a function $h(\mathbf{h},\mathbf{s})=(\mathbf{h},\Sigma^{-1/2}\mathbf{s})^{\top}$ is linear.
\end{proof}

\subsection{Proof of Lemma \ref{Lemma_A.5}}

\begin{proof}[Proof of Lemma \ref{Lemma_A.5}]

(a) From Assumption \ref{ass::TAH_1}, it holds that the normalized Moreau envelope function in \eqref{TAH_73} converges in probability to $L$ and $F$, respectively. Moreover, the convergence part holds because  the probability limit of $\|\mathbf{h}\|_{2}/p$ is $1$ by the weak law of large numbers. 

$\mathcal{R}_{p}$ is convex-concave due to the result of Lemma \ref{Lemma_A.4} (i). Since convexity and concavity are preserved by the pointwise limits, the same holds for $\mathcal{D}$.

(b) The proof follows that of Lemma A.5 in \citet{thrampoulidis2018precise}. That proof requires some convex and concave conditions, as well as the application of Lemma B.1, Lemma B.2 and Lemma D.1 in \citet{thrampoulidis2018precise}. These conditions naturally follow from the convexity and concavity of $\mathcal{R}_{p}$ and  $\mathcal{D}$ that are  guaranteed from the previous part. Moreover, three aforementioned lemmas provide general results irrespective of the form of the covariance matrix. Therefore, the required result holds. 
\end{proof}

\appendix

\bibliographystyle{plainnat}
\bibliography{main}
\end{document}